\newtheorem{thm}{Theorem}[section]
\newtheorem{lm}[thm]{Lemma}
\newtheorem{defn}[thm]{Definition}
\newtheorem{prop}[thm]{Proposition}
\newtheorem{cor}[thm]{Corollary}
\newtheorem{ex}[thm]{Example}
\newtheorem{rmk}[thm]{Remark}
\newtheorem{alg}[thm]{Algorithm}
\numberwithin{equation}{section}
\newcommand{\E}{\mathbb{E}}
\newcommand{\pr}{\mathbb{P}}
\newcommand{\R}{\mathbb{R}}
\newcommand{\la}{\langle}
\newcommand{\ra}{\rangle}
\newcommand*\dif{\mathop{}\!\mathrm{d}}
\newcommand{\rvline}{\hspace*{-\arraycolsep}\vline\hspace*{-\arraycolsep}}
\title[Tridiagonal Model for $\beta$-Dyson Brownian Motion]{On the Limit of the Tridiagonal Model \\for $\beta$-Dyson Brownian Motion}
\author{Alan Edelman}
\address{Department of Mathematics and Computer Science \& AI Laboratory, Massachusetts Institute of Technology, Cambridge, Massachusetts}
\email{edelman@mit.edu}
\author{Sungwoo Jeong}
\address{Department of Mathematics, Cornell University, Ithaca, New York}
\email{sjeong@cornell.edu}
\author{Ron Nissim} 
\address{Department of Mathematics, Massachusetts Institute of Technology, Cambridge, Massachusetts}
\email{rnissim@mit.edu}
\begin{document}

\begin{abstract} 
    In previous work, a description of the result of applying the Householder tridiagonalization algorithm to a G$\beta$E random matrix is provided by Edelman and Dumitriu. The resulting tridiagonal ensemble makes sense for all $\beta>0$, and has spectrum given by the $\beta$-ensemble for all $\beta>0$. Moreover, the tridiagonal model has useful stochastic operator limits which was introduced and analyzed in subsequent studies. 
    \par In this work, we analogously study the result of applying the Householder tridiagonalization algorithm to a G$\beta$E process which has eigenvalues governed by $\beta$-Dyson Brownian motion. We propose an explicit limit of the upper left $k \times k$ minor of the $n \times n$ tridiagonal process as $n \to \infty$ and $k$ remains fixed. We prove the result for $\beta=1$, and also provide numerical evidence for $\beta=1,2,4$. This result provides a speculation for the form of a dynamical $\beta$-stochastic Airy operator with smallest $k$ eigenvalues evolving according to the $n \to \infty$ limit of the largest, centered and re-scaled, $k$ eigenvalues of $\beta$-Dyson Brownian motion. Unfortunately, additional numerical simulations and a perturbative calculation which we provide do not substantiate this speculation.
\end{abstract}

\maketitle






\section{Introduction}

Dyson Brownian motion (DBM) introduced by Freeman Dyson in 1962 \cite{Dy62} is a widely studied stochastic model for a system of particles that possess pairwise repulsive behavior. For any $\beta >0$, $\beta$-Dyson Brownian motion ($\beta$-DBM) is the dynamics of a system $\{\lambda_i\}_{i=1}^{n}$ undergoing the following system of stochastic differential equations
\begin{equation}\label{Dyson BM}
d\lambda_i =\left(\sum_{1\leq j \neq i \leq n} \frac{1}{\lambda_i - \lambda_j}-\sum_{i=1}^{n}\frac{\lambda_i}{2}\right) dt  + \sqrt{\frac{2}{\beta}} d B_j.
\end{equation}
As long as  $\lambda_1(0),\dots ,\lambda_n(0)$, are distinct, it is known that \eqref{Dyson BM} has a unique strong solution and $\lambda_1(t),\dots ,\lambda_n(t)$ are a.s.\  distinct for all $t>0$ as long as $\beta \geq 1$. Moreover $\beta$-DBM has a unique invariant distribution given by the $\beta$-ensemble (where $\mathcal{Z}$ is for normalization)
\begin{equation}\label{betaEigenvalDist}
     \mathcal{Z}^{-1}\prod_{i<j}|\lambda_i -\lambda_j|^{\beta}\prod_{i=1}^n e^{-\lambda_i^2/4} dx_i.
\end{equation}

In the random matrix context, for $\beta=1,2,4$, $\beta$-DBM describes the evolution of the eigenvalues of matrices evolving according to the G$\beta$E process, which is the following matrix valued Ornstein--Uhlenbeck process 
\begin{equation}\label{G beta E Process}
    dM = -M dt +\sqrt{2}d B_{G\beta E},
\end{equation}
that leaves the G$\beta$E distribution invariant \cite{Anderson_Guionnet_Zeitouni_2009,forrester2010log}. And as a consequence G$\beta$E matrices (alternatively known as $\beta$-Hermite ensembles) have eigenvalue distribution given by \eqref{betaEigenvalDist}.

For $\beta=1,2,4$ the G$\beta$E matrices coincide with the highly studied Gaussian  Orthogonal Ensemble (GOE), Gaussian Unitary Ensemble (GUE), and Gaussian Symplectic Ensemble (GSE) respectively. These three G$\beta$Es are described by the canonical random symmetric, Hermitian, and self-dual matrix models, respectively, endowed with Gaussian entries. For general $\beta>0$, there is no immediate generalization.


However, by applying the Householder tridiagonalization used in numerical linear algebra \cite{bauer1959sequential,wilkinson1962householder} on G$\beta$E matrices, one obtains the following symmetric tridiagonal model \cite{DuEd02}:
\begin{equation}\label{StationaryTridiagonal}
    \begin{pmatrix}
        N(0,2) & \chi_{\beta (n-1)} &  &  &   &  &  \\
        \chi_{\beta (n-1)} &  N(0,2) & \chi_{\beta (n-2)} &  & &  & \\
         &  \chi_{\beta(n-2)}& \ddots & \ddots &  &  &  \\
         & & \ddots & \ddots & \ddots & & \\
         & & & \ddots & N(0, 2) & \chi_{2\beta} &\\ 
         &  &  &  & \chi_{2\beta} & N(0,2) & \chi_{\beta}\\
          &  &  &  &  & \chi_{\beta} & N(0,2)
    \end{pmatrix}.
\end{equation}
It was verified \cite{DuEd02} that for all $\beta >0$ one has \eqref{betaEigenvalDist} as the eigenvalue distribution for the tridiagonal matrix ensemble \eqref{StationaryTridiagonal}.  In addition to providing a matrix model for the the $\beta$ ensemble for all $\beta$, one can find stochastic differential operator limits ($n\to\infty$) for the tridiagonal matrix model which provide further insight into quantities such as the largest eigenvalue governed by the $\beta$-Tracy Widom distribution. Such an operator limits are introduced and analyzed in \cite{Ed04,EdSu07}, and further analyzed in \cite{ramirez2011beta}. Over the past two decades the G$\beta$E tridiagonal model and its operator limits have been studied and applied extensively, e.g., see \cite{dumitriu2006global, li2010beta,dumitriu2012global,AlGu13,allez2014tracy,krishnapur2016universality,valko2017sine,HoPa17,gorin2018stochastic,LambertPaquette03,paquette2023universality,trogdon2024computing}, and related work on tridiagonalization of random matrices \cite{deift2021conjugate,ding2022conjugate}.
\begin{figure}[h]
    \begin{overpic}[width=0.95\textwidth]{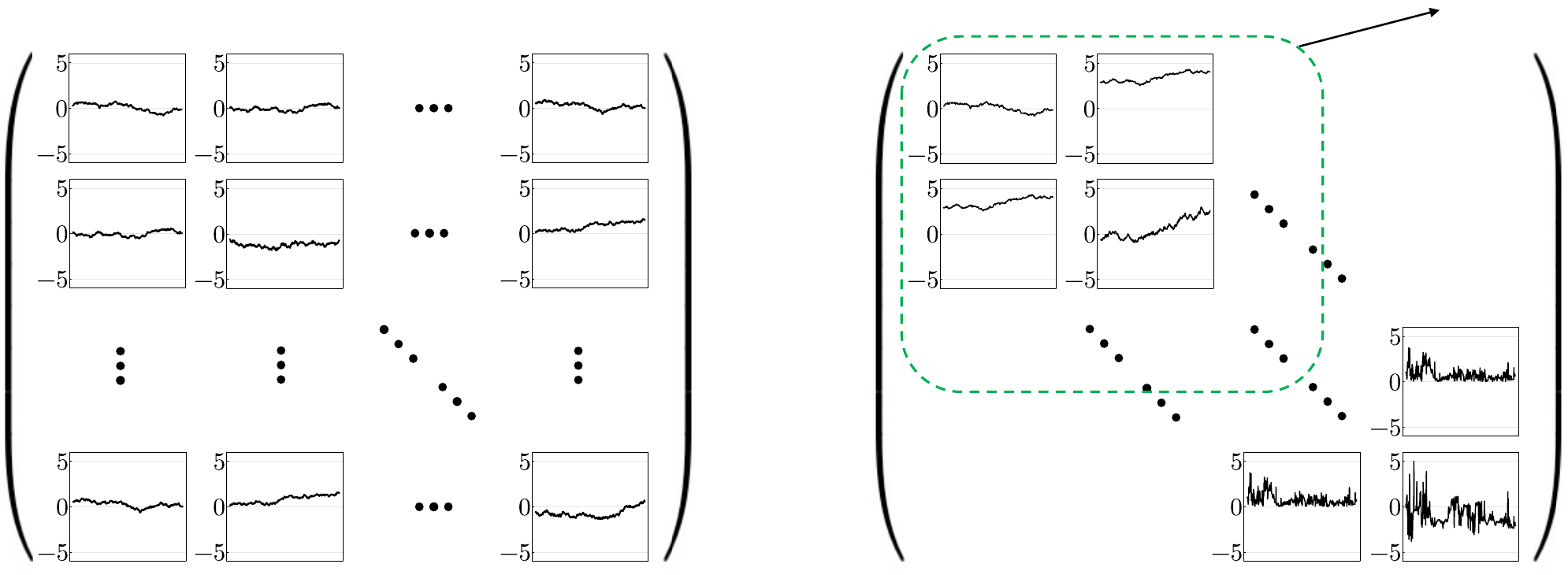}
    \put(82.5, 34.4) {\rotatebox{16}{$n$ large}}
    \put(92, 35.3) {\eqref{Tridiagonal Evolution Informal}}
    \put(48, 16.5) {$\longrightarrow$}
    \end{overpic}    
    \caption{Householder Tridiagonalization (right) of a $10\times 10$ GOE process (left)}\label{fig:intropic}
\end{figure}

In this work, we consider applying the Householder tridiagonalization procedure to the $n\times n$ G$\beta$E process, for each $t$, as depicted graphically in Figure~\ref{fig:intropic}. The resulting process is a real symmetric tridiagonal matrix valued stochastic process with eigenvalue distribution governed by $\beta$-DBM, \eqref{Dyson BM}. Since these tridiagonal matrices are symmetric, we can treat the entries as a system of $2n-1$ scalar stochastic processes. 

Although at a fixed time these processes are described simply as in \eqref{StationaryTridiagonal}, as processes they appear to be considerably more complicated. For instance, our numerical results in Section~\ref{sec:notgaussian} indicate that the diagonal entries are not Gaussian processes despite their Gaussianity at a fixed time. Nevertheless, as the dimension of the matrices $n \to \infty$, a fixed $k \times k$ upper-left corner submatrix process can be roughly described as below where the entries are independent to each other: 
\small
\begin{equation}\label{Tridiagonal Evolution Informal}
    \begin{pmatrix}
        \sqrt{2}\mathrm{OU}(1) & \mathrm{ROU}_{\beta(n-1)}(1) &       \\
       \mathrm{ROU}_{\beta(n-1)}(1) & \sqrt{2}\mathrm{OU}(3) &  \hspace{-2cm} \ddots \\
         &  \ddots  &  \hspace{-3cm}\ddots &  \hspace{-7cm}\ddots    \\
         & \hspace{1.5cm} \ddots & \hspace{-1.5cm}\ddots\\
          &   & & \mathrm{ROU}_{\beta(n-k+1)}(k-1)\\
            & & \hspace{-1cm}\mathrm{ROU}_{\beta(n-k+1)}(k-1) & \sqrt{2}\mathrm{OU}(2k-1)
    \end{pmatrix}.
\end{equation}
\normalsize
($\mathrm{OU}(m)$ denotes a stationary Ornstein--Uhlenbeck process with parameters $\theta=m, \sigma = \sqrt{2m}$, and $\mathrm{ROU}_{d}(m)$ is a radial Ornstein--Uhlenbeck process with dimension parameter $d$, and $\theta=m, \sigma = \sqrt{2m}$; refer to Section \ref{OU Section} for the notation.) Furthermore, one can check from the SDE governing $\mathrm{ROU}_{d}(m)$, that $\frac{1}{\sqrt{2d}}(\mathrm{ROU}_{d}(m)^2-d) \to \mathrm{OU}(2m)$ so our main result will use this re-centering and scaling.

Our main result, Theorem~\ref{Main Theorem Statement}, states that for $\beta=1$, the tridiagonalization of the $n\times n$ G$\beta$E process, denoting diagonal entries by $a_j(t)$ and off-diagonal entries by $b_j(t)$, as $n\to\infty$
\begin{align*}
    a_j(t) &\to \sqrt{2}\mathrm{OU}(2j-1),\\
    \frac{1}{\sqrt{\beta n}}(b_j(t)^2 - \beta n) &\to \sqrt{2}\mathrm{OU}(2j),
\end{align*}
for entries in the $k \times k$ upper-left corner submatrix process, i.e., $j\leq k$. Moreover all of these limiting Ornstein--Uhlenbeck processes are independent of each other. Although we only prove the statement for $\beta=1$, the proof should easily generalize to the $\beta = 2,4$ cases, and our numerical evidences substantiate this claim.  

This result suggests a simple tridiagonal model which has the same largest few eigenvalues as $\beta$-DBM in the $n \to \infty$ limit, (namely extending the pattern \eqref{Tridiagonal Evolution Informal} all the way to $k=n$) and this tridiagonal model should converge to a time evolving stochastic Airy operator defined in Section \ref{open problem section}. Unfortunately our result only concern a finite number of entry processes, so we cannot actually recover eigenvalue or operator limit information. Moreover, our numerical experiments and a perturbative calculation provide evidence against this limit. In Section~\ref{open problem section} we further discuss conjectures and further open problems related to tridiagonal processes and their eigenvalues. 

In addition to proving the main theorem, we perform numerical experiments to support our result. Since we study large matrix processes our simulations are implemented on parallel machines to increase efficiency. See Section~\ref{Numerics} for details.



\subsection{Organization of the Paper}

Section~\ref{Notation and Background} covers the notation and background needed to read the rest of the paper. In particular Sections~\ref{OU Section} and \ref{Householder and Tridiag Section} provide the notation and setup needed to understand the main result, numerical experiments, and future directions, while Sections~\ref{Chebyshev Polynomial Section}, \ref{Non-crossing Partitions}, \ref{Factorization Section},  and \ref{Approximate thetas}, are only referred to during the proof of Theorem \ref{Main Theorem Statement}, which is the main result.

In Section~\ref{sec:mainresult} we state the main result, Theorem~\ref{Main Theorem Statement}, and outline its proof. Before providing a detailed proof of Theorem~\ref{Main Theorem Statement}, we provide numerical results in Section~\ref{Numerics}, and remarks on prior research in the areas, as well as potential future direction in Section~\ref{open problem section}. These two sections can be read independently of the proof of Theorem~\ref{Main Theorem Statement} which is deferred to Sections~\ref{Approximation Section} and \ref{Moment Method Section}, as well as the Appendix which supplements Section~\ref{Approximation Section}.

\section{Notation and Background}\label{Notation and Background}
 
\subsection{GOE, Ornstein--Uhlenbeck Process, and Dyson Brownian Motion}\label{OU Section}
For each $n \in \mathbb{N}$, let $m^{(n)}(t)$ denote an $n \times n$ GOE Ornstein--Uhlenbeck process. That is, $\{(m^{(n)}(0))_{i,j}\}_{1 \leq i \leq j \leq n}$ are all independent, $(m^{(n)}(0))_{i,j}=(m^{(n)}(0))_{j,i}$ for all $i,j \in [n]$, and
\begin{equation}
    \begin{cases}
        (m^{(n)}(0))_{i,j} \sim N(0,1) \hspace{3mm}  \text{if } i\neq j\\
        (m^{(n)}(0))_{i,j} \sim N(0,\sqrt{2}) \hspace{3mm} \text{if } i= j
    \end{cases}
\end{equation}
\noindent
and $m^{(n)}(t)$ evolves according to 

\begin{equation}\label{GOE OU SDE}
    \dif{m^{(n)}}(t) = -m^{(n)}(t) \dif{t} +  \sqrt{2}\dif{B^{(n)}_{\mathrm{GOE}}}(t)
\end{equation}
where $B^{(n)}_{\mathrm{GOE}}(t):= \frac{1}{\sqrt{2}}(B^{(n)}(t)+B^{(n)}(t)^T)$ and $B^{(n)}(t)$ is an $n \times n$ matrix with independent standard Brownian Motion entries.
It is well known that this SDE is globally well-posed and the unique solution $m^{(n)}(t)$ is a stationary matrix valued Gaussian process with covariances given by
\begin{equation}\label{GOE OU Covariances}
\E[(m^{(n)}(t))_{i_1,j_1}(m^{(n)}(t))_{i_2,j_2}] = \delta_{\{i_1,j_1\},\{i_2,j_2\}}(1+\delta_{i_1,j_1})e^{-|t-s|}.
\end{equation}
For each $n \in \mathbb{N}$ we take a (continuous) copy of $m^{(n)}(t)$ which will all be processes living in a single probability space $(\Omega, \mathcal{F}, \pr)$.

Let us also define for more general choice of parameters, the stationary Ornstein--Uhlenbeck process.
\begin{defn}\label{Ornstein-Uhlenbeck Process}
    The normalized stationary scalar Ornstein--Uhlenbeck process $\mathrm{OU}(c)$, is the unique solution to
    \begin{equation}\label{Stationary Scalar Ornstein Uhlenbeck process SDE}
    \begin{split}
        &dx(t)=-m x(t)\dif{t} +\sqrt{2m} \dif{W(t)} \\
        &x(0) \sim N(0,1)
    \end{split}
\end{equation}
for a standard one dimensional Brownian motion $W(t)$. Or equivalently, $\mathrm{OU}(m)$ is a stationary Gaussian process for all time with covariances given by $\E[x(t)x(s)]=e^{-m|t-s|}$. Furthermore if $m=(m_1,...,m_n)$ denote by $\mathbf{OU}_n(m)$ the vector valued Ornstein--Uhlenbeck process that has independent $\mathrm{OU}(m_i)$ entries. If $c$ is scalar, the notation $\mathbf{OU}_n(m)$ will be short for $\mathbf{OU}_n((m,...,m))$.
\end{defn}

Lastly we define the stationary radial Ornstein--Uhlenbeck process.

\begin{defn}
    The normalized stationary radial Ornstein--Uhlenbeck process $\mathrm{ROU}_d(m)$, is the unique solution to 
    \begin{equation}\label{Stationary Radial Ornstein Uhlenbeck process SDE}
    \begin{split}
        &dR(t)=\bigg(\frac{m(d-1)}{R(t)}-mR(t)\bigg)\dif{t} +m \dif{W(t)} \\
        &R(0) \sim \chi_d
    \end{split}
\end{equation}
for a standard one dimensional Brownian motion $W(t)$.
\end{defn}

\begin{rmk}
    In the case that $d \in \mathbb{N}$, $\mathrm{ROU}_d(m)=||\mathbf{OU}_d(m)||$ in distribution.
\end{rmk}

\subsection{Householder Matrices and Tridiagonalization}\label{Householder and Tridiag Section}

\begin{defn}[Householder Reflector]\label{def:householder}
For a given unit vector $v\in \mathbb{R}^n$, an $n\times n$ matrix $H = I - 2vv^T$ is called a Householder reflector. 
\end{defn}

\begin{rmk}
The matrix $H$ geometrically represents a flip about the plane orthogonal to $v$. For any given vector $x\in \mathbb{R}^n$, letting $v = \frac{x - \|x\| e_1}{\|x - \|x\|e_1\|}$ we obtain a Householder reflector $H$ by Definition~\ref{def:householder}. Then we have
\begin{equation*}
    H x = \begin{pmatrix} \|x\| \\ 0 \\ \vdots \\ 0\end{pmatrix}.
\end{equation*}
Moreover any Householder reflector $H$ is both symmetric and orthogonal. 
\end{rmk}

Next we recall the well-known Householder matrix based tridiagonalization procedure which starts with a symmetric matrix $A$ and outputs a symmetric tridiagonal matrix $T$ which is similar to $A$. 

\begin{alg}[Householder Tridiagonalization]\label{alg:householder}
\,
    \begin{enumerate}
        \item Given a symmetric $n \times n$ matrix $A$, set $A=A_0$ and $k=0$
        \item Write the $(n-k) \times (n-k)$ matrix $A_k$ in the block form
        \begin{equation}
            A_k=
            \begin{pmatrix}
                a_k
                &\rvline & x_k^T \\
                \hline
                x_k & \rvline &
                \Tilde{A}_k
            \end{pmatrix}
        \end{equation}
        where $a_k$ is a $1 \times 1$, $x_k$ is $(n-k-1)\times 1$ and $\Tilde{A}_k$ is $(n-k-1) \times (n-k-1)$

        \item Take $v_k := \frac{x_k-\|x_k\|e_{k+2}}{\|x_k-\|x_k\|e_{k+2}\|}$, where $x_k$ is a vector in $\R^n$ by taking the first $k+1$ entries to be $0$, and define the Householder matrix $H_k=I_{n}-2v_kv_k^T$. Note that $H_k$ can be written in the following block form,
        \begin{equation}
            H_k:=
            \begin{pmatrix}
                1
                & \rvline & 0 \\
                \hline
                0 & \rvline &
                \Tilde{H}_k
            \end{pmatrix},
        \end{equation}
        where $\Tilde{H}_k$ is an $(n-k-1) \times (n-k-1)$ matrix.
        \item Preform the similarity transformation

        \begin{equation}
            \begin{split}
                H_k^T A_k H_k &= 
                \begin{pmatrix}
                a_k
                & \rvline & \|x_k\| & 0 \\
                \hline
                \begin{matrix}
                    \|x_k\| \\ 0
                \end{matrix} & \rvline &
                \Tilde{H}_k^T \Tilde{A}_k \Tilde{H}_k
            \end{pmatrix}\\
            &=: \begin{pmatrix}
                a_k
                & \rvline & b_k & 0 \\
                \hline
                \begin{matrix}
                    b_k \\ 0
                \end{matrix} & \rvline &
                A_{k+1}
            \end{pmatrix}
            \end{split}
        \end{equation}
        \item If: $k< n-2$, set $k \to k+1$ and  go back to Step 2.\\
        Else: Return a tridiagonal matrix 
        \begin{equation}\label{eq:tridiagonalmatrix}
            T :=
            \begin{pmatrix}
                a_1 & b_1 & 0 &\cdots & 0 \\
                b_1 & a_2 & b_2 &  & \vdots \\
                0 & b_2 & a_3 & \ddots & 0  \\
                \vdots &  & \ddots & \ddots& b_{n-1} \\
                0 & \cdots  & 0 &  b_{n-1} & a_n
            \end{pmatrix}
        \end{equation}
    \end{enumerate}
    
\end{alg}    

\begin{rmk}
    In numerical linear algebra a standard practice in Step 3 is to select $v_k$ as a unit vector parallel to $x_k + \text{sign}((x_k)_1)||x_k||e_1$ to ensure numerical stability of the algorithm. However in this work we simply use the Householder reflector that gives $b_k > 0$ for all $k$. 
\end{rmk}

\begin{rmk}\label{rmk:DE02-reminder}
    We remind the reader that if one applies Algorithm \ref{alg:householder} to an $n \times n$ $\mathrm{GOE}$ distributed random matrix, then at every step $k$, $x_k$ is a standard Gaussian vector in $\R^{n-k-1}$, and the $x_k$ are all independent of each other. Moreover, the final result of the algorithm, $T$, is distributed as in equation \eqref{StationaryTridiagonal} where all the entries are independent of each other. We refer the reader to \cite{DuEd02} for additional background.
\end{rmk}

In our setting, replace $n$ with $n+1$, take $M_0(t)=M(t)$ to correspond to $A_0$ in the tridiagonalization algorithm \ref{alg:householder}, and for $k\geq 0$ we take $M_k(t)$ to correspond to the partially tridiagonalized matrix
\begin{equation*}
    \begin{pmatrix}
    \begin{matrix}
        a_1 & b_1 & & \\
        b_1 & a_2 & b_2 & \\
        & & & \ddots & b_{k-2}\\
        & & & b_{k-2} & a_{k-1}
    \end{matrix}
    &\rvline & 0 \\
    \hline
    0 & \rvline & A_k
    \end{pmatrix}.
\end{equation*}
Similarly we take $x_k(t)$ to correspond to $x_k$, and $Q_k(t)$ to correspond to $\tilde{H}_k$. Finally we let $a_k(t)$ and $b_k(t)$ correspond to $a_k$ and $b_k$ respectively, the entries of the resulting tridiagonalization procedure, and and $\mathcal{T}(t)$ for the tridiagonal $T$. We also sometimes write $a^{(n)}_k(t)$ and $b^{(n)}_k(t)$ to emphasize the dependence on $n$. 

However, we diverge slightly from the notation of algorithm \ref{alg:householder}. In particular we will treat the $x_k(t)$ vectors as vectors in $\R^{n+1}$ with the first $k+1$ entries equal to $0$, and we also treat $Q_k(t)$ as an $(n+1) \times (n+1)$ matrix by acting as the identity on vectors which only have their first $k+1$ components nonzero.

We can summarize the steps of the tridiagonalization procedure in the following recursion with $M_0(t) = M(t)$:
\begin{equation}
    \begin{split}
        a_{k+1}(t)&=e_{k+1}^T M_k(t)e_{k+1},\\
        \vartheta_{k}(t)&=\frac{1}{\sqrt{n}}M_k(t)e_{k+1}-\|\vartheta_{k-1}(t)\|e_k-\frac{1}{\sqrt{n}}a_{k+1}(t)e_{k+1},\\
        b_{k+1}(t) &= \|\vartheta_k(t)\|,\\
        Q_k(t)&=I-\frac{2(\vartheta_k(t)-\|\vartheta_k(t)\|e_{k+2})(\vartheta_k(t)-\|\vartheta_k(t)\|e_{k+2})^T}{\|\vartheta_k(t)-\|\vartheta_k(t)\|e_{k+2}\|^2},\\
        M_{k+1}(t) &= Q_k(t)M_k(t)Q_k(t).
    \end{split}
\end{equation}
Where $\vartheta_k(t)$ is defined via the relation, $\vartheta_k(t):=\frac{x_k(t)}{\sqrt{n}}$. Once again we stress that the $\vartheta_k(t) \in \R^{n+1}$ with the first $k+1$ entries identically zero.

The division by zero in the definition of $Q_k(t)$ above might make some readers uncomfortable. But observe, at a fixed time, $x_k(t) \sim \chi_{n+1-k}$ (e.g., see \cite{DuEd02}), so clearly $\vartheta_k(t)-\|\vartheta_k(t)\|e_{k+2} \neq 0$ almost surely. Moreover, this is actually true almost surely for all $t \in [0,T]$ for any $T>0$.

\begin{lm}
    For any $T,\epsilon>0$, as long as $n>k+3$,
    \begin{equation}
        \pr\left(x_k \in C^{1/2-\epsilon}([0,T]), \vartheta_k(t)-\|\vartheta_k(t)\|e_{k+2} \neq 0 \text{ for all } t \in [0,T]\right)=1.
    \end{equation}
\end{lm}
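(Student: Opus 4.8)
The plan is to establish the two claims separately: first, almost-sure Hölder continuity of the process $x_k(t)$ with any exponent less than $1/2$; second, that the degenerate event $\vartheta_k(t) - \|\vartheta_k(t)\| e_{k+2} = 0$ is avoided simultaneously for all $t \in [0,T]$. For the first part I would observe that each entry of $x_k(t)$ is obtained from the entries of $m^{(n+1)}(t)$ by finitely many applications of the smooth maps appearing in Algorithm~\ref{alg:householder} (and the intermediate recursion for $\vartheta_j, a_j, Q_j$ with $j < k$), restricted to the open dense set where all the Householder reflectors are well-defined. Since the GOE Ornstein--Uhlenbeck process $m^{(n+1)}(t)$ is a Gaussian process with exponentially decaying covariance \eqref{GOE OU Covariances}, it has a version whose sample paths lie in $C^{1/2-\epsilon}([0,T])$ for every $\epsilon>0$ by the Kolmogorov--Chentsov continuity theorem (the increments are Gaussian with variance $O(|t-s|)$, so all moments are controlled). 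Hölder continuity is preserved under composition with locally Lipschitz maps on compact time intervals, so $x_k(\cdot)$ inherits the $C^{1/2-\epsilon}$ regularity, provided the path stays in a compact subset of the good open set --- which is exactly what the second part guarantees.

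For the second part, the key point is that the vector $\vartheta_k(t) - \|\vartheta_k(t)\| e_{k+2}$ vanishes precisely when $x_k(t)$ (viewed in its last $n-k$ coordinates) is a nonnegative multiple of $e_{k+2}$, i.e., when $x_k(t)$ points along a single coordinate axis with the correct sign. At any fixed time $t$, it is known (e.g., \cite{DuEd02}) that after the first $k$ Householder steps the relevant sub-vector $x_k(t)$ has the distribution of a vector with $\chi_{n+1-k}$-distributed norm and uniformly distributed direction on the sphere (independent of earlier steps), so in particular its law is absolutely continuous on $\R^{n-k}$ and the bad set --- a ray, hence a set of dimension $1$ in dimension $n-k \ge 4$ --- has probability zero. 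Thus for each fixed $t$ we have $\pr(\vartheta_k(t) - \|\vartheta_k(t)\|e_{k+2} = 0) = 0$.

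To upgrade this from "each fixed $t$" to "all $t \in [0,T]$ simultaneously" I would use a Fubini/occupation-time argument combined with the continuity already in hand. Let $Z(t) := \vartheta_k(t) - \|\vartheta_k(t)\| e_{k+2}$, a continuous $\R^{n-k}$-valued process off a null set. Consider $g(t) := \|Z(t)\|^2 \wedge$ (something), or more directly apply the co-area/Fubini identity $\E \int_0^T \mathbbm{1}\{Z(t) = 0\}\,dt = \int_0^T \pr(Z(t)=0)\,dt = 0$, so almost surely $Z(t) \neq 0$ for Lebesgue-a.e.\ $t$. To pass to \emph{every} $t$, note that the set $\{t : Z(t) = 0\}$ is closed (by continuity of $Z$), and I would argue it cannot contain an isolated point with positive probability either: near such a time the Gaussian structure of $m^{(n+1)}$ makes the event that the path returns exactly to the ray a measure-zero event --- alternatively, one invokes that a one-dimensional projection of the nondegenerate Gaussian field $t \mapsto x_k(t)$ (after conditioning on direction being close to $e_{k+2}$) behaves like a nondegenerate Gaussian process, whose level sets through a fixed lower-dimensional target are a.s.\ empty when the target has codimension $\ge 2$. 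Cleanly, the cleanest route is: the map $t \mapsto x_k(t)$ restricted to any compact interval has image of Hausdorff dimension at most $2$ (since it is $C^{1/2-\epsilon}$ into $\R^{n-k}$, the image has dimension $\le 2+\epsilon$), while the bad ray has dimension $1$; a transversality/energy argument then shows that a "generic" (here, Gaussian-distributed) curve of dimension $2$ in $\R^{n-k}$ with $n-k \ge 4$ a.s.\ misses a fixed line --- because $2 + 1 < n-k$. I would make this precise via a covering argument: partition $[0,T]$ into $\sim N^2$ subintervals of length $T/N^2$, on each the oscillation of $x_k$ is $O(N^{-1+\epsilon})$, so the image is covered by $N^2$ balls of that radius; the probability that any fixed such ball (with center the random value $x_k(t_i)$) meets the ray is $O(N^{-(n-k-2)(1-\epsilon)})$ by the density bound on the law of $x_k(t_i)$, and summing $N^2 \cdot N^{-(n-k-2)(1-\epsilon)} \to 0$ as $N \to \infty$ whenever $n-k > 4$, which is implied by the hypothesis $n > k+3$ after accounting for $\epsilon$ (and for the boundary case one sharpens the exponent or uses the exact $\chi$-law).

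The main obstacle is this last step --- upgrading the fixed-time null statement to a uniform-in-time statement. The fixed-time claim is essentially immediate from known distributional facts in \cite{DuEd02}, and Hölder continuity is a textbook application of Kolmogorov--Chentsov; but ruling out that the continuous process $Z(t)$ ever touches zero requires genuinely using that the excursion of $x_k(t)$ near the forbidden ray is controlled, for which the quantitative density bound on the law of $x_k(t)$ (uniform over $t$, by stationarity) together with the modulus of continuity is the right tool. I would expect the dimension count "$2 + 1 < n - k$" to be where the hypothesis $n > k+3$ is used, and I would double-check whether the stated bound $n > k+3$ versus $n \ge k+4$ is tight for this argument or whether the exact $\chi$ distribution is needed to handle an edge case.
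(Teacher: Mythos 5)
Your overall strategy is the same as the paper's: obtain $C^{1/2-\epsilon}$ regularity of $x_k$ from the Gaussian regularity of the matrix process plus smoothness of the algorithm's maps, reduce the degeneracy to a fixed-time small-ball/density estimate, and upgrade to all $t\in[0,T]$ by a discretization/union bound that trades the Hölder modulus against the fixed-time density. However, your quantitative step contains a concrete error in the exponent. The forbidden set is (contained in) the nonnegative $e_{k+2}$-axis, a line in $\R^{n-k}$, whose tubular neighborhood of radius $r$ has probability $O(r^{\,n-k-1})$ under the (bounded-density, Gaussian-type) law of $x_k(t_i)$ --- the relevant codimension is $n-k-1$, not $n-k-2$. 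With your exponent the union bound $N^2\cdot N^{-(n-k-2)(1-\epsilon)}$ forces $n-k>4$, which excludes the case $n-k=4$ allowed by the hypothesis $n>k+3$; you flagged this mismatch but did not resolve it. With the corrected codimension the sum is $N^{2-(n-k-1)(1-2\epsilon)}\to 0$ as soon as $n-k-1\ge 3$ and $\epsilon$ is small (the paper's version of the same computation requires $(1/2-\epsilon)(n-k-1)>1$, i.e.\ $\epsilon<1/6$), so the slip is arithmetic rather than structural, and in fact the paper needs only the weaker event that the sub-vector $(x_k^{(2)},\dots,x_k^{(n-k)})$ vanishes, whose components are i.i.d.\ standard Gaussians at a fixed time.

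A second, smaller point: your justification of the Hölder continuity of $x_k$ (``provided the path stays in a compact subset of the good open set --- which is exactly what the second part guarantees'') is mildly circular as phrased. Well-definedness and local Lipschitzness of the maps producing $x_k$ require nondegeneracy of the Householder reflectors at the \emph{earlier} steps $j<k$, whereas your second part proves nondegeneracy at step $k$. The paper resolves this by running the whole argument as an induction on $k$: the inductive hypothesis supplies nondegeneracy (hence continuity) at steps $j<k$, which gives $x_k\in C^{1/2-\epsilon}$, and then the discretization argument proves nondegeneracy at step $k$. Restructuring your proof in this inductive way, together with the corrected exponent $n-k-1$, makes it match the paper's proof essentially line for line.
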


\begin{proof}
    The proof is by induction. For the $k=0$ case $x_0(t)$ is a vector-valued Ornstein--Uhlenbeck process, so $x_0 \in C^{1/2-\epsilon}([0,T])$. Additionally, $\|x_0(t)-\|x_0(t)\|e_{2}\| \geq \|(x_0^{(2)}(t),...,x_0^{(n+1)}(t))\|$ which is the norm of the vector with all but the first component. But since $n>3$, by classical facts about Brownian motion (e.g., \cite{gall2016brownian}), this norm is a.s. non-zero for all $t \in [0,T]$.
    
    For the inductive step, $M(t)\in C^{1/2-\epsilon}([0,T])$, and since there was a.s. no division by zero in the algebraic operations producing $x_k(t)$, we clearly have $x_k(t) \in C^{1/2-\epsilon}([0,T])$. Now it suffices to prove that $\tilde{x}_k(t)=(\tilde{x}_k^{(1)}(t),...,\tilde{x}_k^{(n-k)}(t)):=(x_k^{(k+2)}(t),...,x_k^{(n+1)}(t)) \neq 0$ for all $t \in [0,T]$ almost surely. To see this, divide the interval $[0,T]$ into $N+1$ equally spaced points $\{t_i\}_{i=1}^{N}$ with spacing $|t_{i+1}-t_i|=\delta=T/N$. Then using the fact that $\left\{\tilde{x}_k^{(j)}(t_1)\right\}_{j=1}^{n-k}$ are i.i.d. standard Gaussians (i.e. Remark \ref{rmk:DE02-reminder}), after fixing any constant $C>0$,
    \begin{equation}
        \begin{split}
            ~&\pr(\tilde{x}_k(t) = 0 \text{ for some } t \in [0,T])\\
            &\leq \pr\left(\|\tilde{x}_k(t)\|_{C^{1/2-\epsilon}} >C \text{ or } \|\tilde{x}_k(t_i)\| < C\delta^{1/2-\epsilon} \text{ for some } i\right)\\
            &\leq \pr(\|\tilde{x}_k\|_{C^{1/2-\epsilon}} >C) +N \pr(\|\tilde{x}_k(t_1)\| < C\delta^{1/2-\epsilon})\\
            &\leq \pr(\|\tilde{x}_k\|_{C^{1/2-\epsilon}} >C) +N \pr(|\tilde{x}_k^{(j)}(t_1)| < C\delta^{1/2-\epsilon}\text{ for } j=1,\dots,n-k)\\
            &\leq \pr(\|\tilde{x}_k(t)\|_{C^{1/2-\epsilon}} >C) +N (C\delta^{1/2-\epsilon})^{n-k-1}.
        \end{split}
    \end{equation}
    Taking $N \to \infty$ ($\delta = T/N \to 0$), and then $C \to \infty$, as long as $(1/2-\epsilon)(n-k-1)>1$, we have $\pr(\Tilde{x}_k(t) = 0 \text{ for some } t \in [0,T])=0$. Since $n-k>3$, as long as $\epsilon < 1/6$, this argument holds. 
\end{proof}

\begin{rmk}
    The condition $n>k+3$ is really necessary in the previous lemma, and the bottom few entries of the tridiagonal will be discontinuous in small examples. This is related to the classical fact that Brownian motion a.s. does not return to its starting point in dimension $\geq 3$.
\end{rmk}

After removing a $0$ probability subset, we will always assume from now on that $n>k+3$, and $\vartheta_k(t), a_k(t), b_k(t) \in C[0,\infty)$.

\subsection{Chebyshev Polynomials}\label{Chebyshev Polynomial Section}
In this section we collect a few facts and identities about the Chebyshev polynomials of the second kind, which can be found in \cite{abramowitz1948handbook}, that will be used in the proof of Theorem \ref{Main Theorem Statement}. 

The Chebyshev polynomials of the second kind $U_k(x)$ are orthogonal on $L^2([-1,1],\frac{2}{\pi} \sqrt{1-x^2}dx)$. They have the three term recurrence, $U_0(1)=1$,
\begin{equation*}
    U_{k+1}(x) = 2x U_k(x)-U_{k-1}(x).
\end{equation*}
Let us also define $P_k(x) = U_k(x/2)$ which are monic, orthonormal on $L^2([-2,2],\frac{1}{2\pi}\sqrt{1-\frac{x^2}{4}} dx)$, and satisfy the recurrence, $P_0(1)=1$,
\begin{equation}
    P_{k+1}(x) = x P_k(x)-P_{k-1}(x).
    \label{Three term reccurence}
\end{equation}
From the recursion its easy to prove the following formula by induction
\begin{equation}
    P_k(x)=\sum_{k'=0}^{[k/2]} (-1)^{k'}\binom{k-k'}{k'} x^{k-2k'}.
    \label{P_k(x) explicit formula}
\end{equation}
Also, $P_k(x)$ satisfies the following product-sum identity,
\begin{equation*}
        P_{j}(x)P_k(x) = \sum_{\ell \leq [(j+k)/2]} P_{j+k-\ell}(x).
\end{equation*}
As a direct corollary, $P_{k}(x)^2-P_{k-1}(x)^2=P_{2k}(x)$, and $(P_{k}(x)-P_{k-2}(x))P_{k-1}(x)=P_{2k-1}(x)$.

\subsection{Partitions, Non-crossing Partitions, and Free Probability}\label{Non-crossing Partitions}

In this section we recall the definition of non-crossing pair partitions, and a generalization of the celebrated asymptotic freedom result for GOE matrices, for more background see \cite{Nica_Speicher_2006}. We purposely avoid the language of free probability for readability.

\begin{defn}
    A \textit{partition} $\pi = \{V_1,\dots ,V_k\}$ of a set $S$ will denote a set of nonempty disjoint subsets $V_i \subset S$ for $i=1,\dots ,k$ s.t. $\cup_{i=1}^{k} V_i = S$. The $V_i$ are the \textit{blocks} of the partition. The set of partitions of $S$ will be denoted $\mathcal{P}(S)$.
\end{defn}

\begin{defn}
    The set of non-crossing partitions $NC[n]$ is the subset of all partitions $\pi = \{V_1,\dots ,V_k\}$ of $[n]=\{1,\dots ,n\}$ s.t. for all choices $1\leq i_1<i_2<i_3< i_4 \leq n$ we cannot have $i_1,i_3 \in V_j$ and $i_2,i_4 \in V_{j'}$ for $j \neq j'$.
\end{defn}

\begin{defn}
    The set of  pair partitions $\mathcal{P}_2(S)$ is the set of $\pi=\{V_1,\dots ,V_k\} \in \mathcal{P}(S)$ with $|V_i|=2$ for $i=1,\dots ,n$. If $n$ is odd note that $\mathcal{P}_2(S) = \emptyset$.
\end{defn}

\begin{defn}
    The set of non-crossing pair partitions $NC_2[n]$ is the set of $\pi=\{V_1,\dots,V_k\} \in NC[n]$ with $|V_i|=2$ for $i=1,\dots,n$. If $n$ is odd note that $NC_2[n] = \emptyset$
\end{defn}

\begin{thm}\label{Semicircular system asymptotics}
    For each $n$, let $(M_1,\dots,M_k)$ be a jointly Gaussian family of GOE random matrices such that
    \begin{equation}
        \begin{split}
            \E\left[(M_i)_{\ell_1,\ell_2} (M_j)_{\ell_1',\ell_2'}\right]= \left(\delta_{(\ell_1,\ell_2),(\ell_1',\ell_2')}+\delta_{(\ell_1,\ell_2),(\ell_2',\ell_1')}\right) \varphi_{i,j}.
        \end{split}
    \end{equation}
    Then,
    \begin{equation}
        \begin{split}
            \frac{1}{n^{1+\ell/2}}\E\left[\mathrm{Tr}(M_{i_1}\dots M_{i_{\ell}})\right] = \sum_{\pi \in NC_2[\alpha]} \prod_{(j,j') \in \pi} \varphi_{i_j,i_{j'}}+O(n^{-1}),
        \end{split}
    \end{equation}
    as $n \to \infty$.
\end{thm}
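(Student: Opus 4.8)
The plan is to expand $\mathrm{Tr}(M_{i_1}\cdots M_{i_\ell})$ entrywise, apply Wick's formula (Isserlis' theorem) to the jointly Gaussian family, and then run the classical genus/topological expansion argument that shows only non-crossing pair partitions survive in the leading order. Concretely, writing
\begin{equation*}
    \E[\mathrm{Tr}(M_{i_1}\cdots M_{i_\ell})] = \sum_{r_1,\dots,r_\ell} \E\big[(M_{i_1})_{r_1,r_2}(M_{i_2})_{r_2,r_3}\cdots (M_{i_\ell})_{r_\ell,r_1}\big],
\end{equation*}
Wick's formula turns each summand into a sum over pair partitions $\pi \in \mathcal{P}_2[\ell]$ of products of covariances $\E[(M_{i_j})_{r_j,r_{j+1}}(M_{i_{j'}})_{r_{j'},r_{j'+1}}]$ (indices mod $\ell$); in particular the sum is empty unless $\ell$ is even. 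Each such covariance equals $\big(\delta_{(r_j,r_{j+1}),(r_{j'},r_{j'+1})}+\delta_{(r_j,r_{j+1}),(r_{j'+1},r_{j'})}\big)\varphi_{i_j,i_{j'}}$, so for a fixed $\pi$ the inner sum over $r_1,\dots,r_\ell$ factors as $\big(\prod_{(j,j')\in\pi}\varphi_{i_j,i_{j'}}\big)$ times the number of index assignments consistent with the $\delta$-constraints imposed by $\pi$.

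The combinatorial heart is then to count, for each $\pi \in \mathcal{P}_2[\ell]$, the number of consistent index tuples. This is the standard computation: think of the trace as a closed polygon with $\ell$ edges labeled by the $M$'s and $\ell$ vertices labeled by the indices $r_j$; the pairing $\pi$ glues edges in pairs (with the two possible orientations coming from the two $\delta$-terms), producing an orientable surface if we always use the orientation-reversing $\delta$ (the ``correct'' gluing for a one-matrix/GUE-type count) and possibly non-orientable gluings from the orientation-preserving $\delta$. The number of free index choices equals $n$ to the power (number of vertices after gluing) $= n^{V(\pi)}$, and the Euler-characteristic bound gives $V(\pi) \le \ell/2 + 1$ with equality exactly when $\pi$ is non-crossing and every glued pair uses the orientation-reversing identification. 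Hence each non-crossing $\pi$ contributes $n^{\ell/2+1}\prod_{(j,j')\in\pi}\varphi_{i_j,i_{j'}}$ to leading order, every other $\pi$ contributes $O(n^{\ell/2})$, and after dividing by $n^{1+\ell/2}$ we obtain the claimed formula with an $O(n^{-1})$ error (there being finitely many pairings, all subleading terms collect into $O(n^{-1})$).

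I expect the main obstacle to be handling the orientation-preserving $\delta$-terms cleanly: these are exactly the terms that distinguish GOE from GUE, and one must verify that any pairing that uses at least one such ``twisted'' gluing drops the vertex count by at least one (so lands in the $O(n^{\ell/2})$ error), and likewise that a crossing among the orientation-reversing gluings also costs a factor of $n$. The cleanest way to package this is via the genus expansion: associate to $(\pi,\text{orientations})$ a compact surface of Euler characteristic $2-2g$ (orientable) or $2-g$ (non-orientable), note $V - E + F = \chi$ with $E = \ell/2$ and $F=1$ (one face, the original polygon), so $V = \ell/2 + 1 + (\chi - 2) \le \ell/2 + 1$, with equality iff the surface is the sphere, iff the gluing is the planar non-crossing one. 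This is entirely classical (it is the GOE analogue of the Wigner/genus computation in \cite{Anderson_Guionnet_Zeitouni_2009,Nica_Speicher_2006}), and the only real work is bookkeeping the multi-matrix covariance weights $\varphi_{i_j,i_{j'}}$, which ride along passively since they depend only on $\pi$ and not on the surviving index sum.
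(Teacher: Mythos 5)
Your argument is correct, and it is essentially the standard proof of this result: the paper itself does not prove Theorem~\ref{Semicircular system asymptotics} but recalls it as background (citing \cite{Nica_Speicher_2006}; see also \cite{Anderson_Guionnet_Zeitouni_2009}), and the Wick expansion followed by the Euler-characteristic/genus count---with the twisted (orientation-preserving) $\delta$-terms and crossing pairings each costing at least one power of $n$, so that only planar untwisted gluings of non-crossing pairings attain $V=\ell/2+1$---is exactly the argument behind the cited result. Since every contribution is an integer power of $n$ and the covariance weights $\varphi_{i_j,i_{j'}}$ depend only on the pairing, the subleading terms indeed collect into $O(n^{-1})$ as claimed.
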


\subsection{Concentration for Empirical Distribution Moments}\label{Factorization Section}

Here we recall the following central limit theorem and deduce an easy corollary.
\begin{thm}\label{Moment Central Limit Theorem}
Let $M(t)$ be the stationary GOE processes. Then for $k$ fixed times $t_1,\dots ,t_k$ and exponents $j_1, \dots ,j_k$ the vector 
\begin{equation*}
    \left(n^{-j_1/2}\left(\mathrm{Tr}(M(t_1)^{j_1})-\E[\mathrm{Tr}(M(t_1)^{j_1})]\right),\dots,n^{-j_2/2}\left(\mathrm{Tr}(M(t_k)^{j_k})-\E[\mathrm{Tr}(M(t_k)^{j_k})]\right)\right),
\end{equation*}
converges both weakly and in the sense of moments to a Gaussian vector, as $n\to\infty$. Moreover, as a process
\begin{equation*}
    \left(n^{-j_1/2}\left(\mathrm{Tr}(M(t)^{j_1})-\E[\mathrm{Tr}(M(t)^{j_1})]\right),\dots,n^{-j_k/2}\left(\mathrm{Tr}(M(t)^{j_k})-\E[\mathrm{Tr}(M(t)^{j_k})]\right)\right),
\end{equation*}
converges weakly in $C[0,\infty)$ to a Gaussian process.
\end{thm}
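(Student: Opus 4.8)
The plan is to reduce everything to a single multivariate statement about the joint moments of the (centered, rescaled) traces, and then upgrade from finite-dimensional convergence to process-level convergence via a tightness argument. First I would set up the combinatorial bookkeeping. Since $M(t)$ is a stationary GOE Ornstein--Uhlenbeck process, for any fixed times $t_1,\dots,t_k$ the family $(M(t_1),\dots,M(t_k))$ is a jointly Gaussian family of GOE matrices with the covariance structure $\E[(M(t_a))_{\ell_1\ell_2}(M(t_b))_{\ell_1'\ell_2'}] = (\delta_{(\ell_1,\ell_2),(\ell_1',\ell_2')}+\delta_{(\ell_1,\ell_2),(\ell_2',\ell_1')})e^{-|t_a-t_b|}$, so that Theorem~\ref{Semicircular system asymptotics} applies with $\varphi_{a,b}=e^{-|t_a-t_b|}$. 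The leading order of $n^{-j/2}\mathrm{Tr}(M(t)^j)$ is governed by non-crossing pair partitions, and the claim is that the \emph{fluctuations} — what is left after subtracting the mean — are Gaussian with a limiting covariance that can be written as a sum over pairings connecting the two trace-cycles. Concretely, I would compute $\lim_n n^{-(j_a+j_b)/2}\mathrm{Cov}(\mathrm{Tr}(M(t_a)^{j_a}),\mathrm{Tr}(M(t_b)^{j_b}))$ by a genus/Wick expansion: expand each trace as a sum over index cycles, apply Wick's theorem, and observe that the contribution is a sum over pair partitions of the $j_a+j_b$ slots; the connected ones that pair the two cycles together and are planar on the cylinder give $O(1)$, while everything else is lower order. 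This identifies the limiting covariance matrix $\Sigma$.

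Next I would establish convergence in the sense of moments. For this I need to control arbitrary mixed moments $\E\big[\prod_{r=1}^m n^{-j_{a_r}/2}(\mathrm{Tr}(M(t_{a_r})^{j_{a_r}}) - \E \mathrm{Tr}(M(t_{a_r})^{j_{a_r}}))\big]$. The standard route is the genus expansion again: each such moment is a sum over pair partitions of all the $\sum_r j_{a_r}$ slots, weighted by products of $e^{-|t_{a_r}-t_{a_{r'}}|}$ and by a power of $n$ determined by the Euler characteristic of the associated surface; after centering, only partitions in which every cycle is ``used'' (i.e. no cycle is paired entirely within itself in a way that survives centering) contribute, and among these the top-order ones are exactly the ones that pair up cycles two at a time planarly — i.e. Wick's formula for a Gaussian vector with covariance $\Sigma$. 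This simultaneously gives the limiting Gaussianity and the moment convergence; weak convergence then follows because a Gaussian limit is determined by its moments (Carleman's condition is trivially satisfied). I would cite the classical GOE genus-expansion results (e.g. in the spirit of Theorem~\ref{Semicircular system asymptotics}, or the references \cite{Anderson_Guionnet_Zeitouni_2009, Nica_Speicher_2006}) rather than redo the surface-counting in detail.

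Finally, for the process-level statement I would prove tightness in $C[0,\infty)$ of $X_n(t) := n^{-j/2}(\mathrm{Tr}(M(t)^j) - \E\mathrm{Tr}(M(t)^j))$ (and jointly for the vector of such processes). By the Kolmogorov--Chentsov criterion it suffices to bound $\E|X_n(t)-X_n(s)|^{2p}$ by $C_p|t-s|^{p}$ uniformly in $n$ for some $p>1$. Writing $M(t)-M(s)$ in terms of the driving Brownian motion, $\mathrm{Tr}(M(t)^j)-\mathrm{Tr}(M(s)^j)$ telescopes into $j$ terms each containing a factor $M(t)-M(s)$, whose entries have variance $O(|t-s|)$ for $|t-s|$ small by \eqref{GOE OU Covariances}; a moment computation (again Wick/genus expansion, now keeping track of the small variance factors) then yields the required Hölder-type bound with the correct power of $n$ cancelling against the $n^{-j/2}$ normalization. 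Combined with the finite-dimensional convergence established above, this gives weak convergence in $C[0,\infty)$ to a Gaussian process whose covariance kernel is the one identified in the first step. The main obstacle I anticipate is organizing the genus expansion cleanly enough to see simultaneously (a) that centering kills all the self-contractions of individual cycles and (b) that the surviving top-order terms assemble exactly into Wick's formula for $\Sigma$ — this is where one must be careful, since the stationary-process covariance $e^{-|t_a-t_b|}$ makes the weights inhomogeneous and one cannot simply quote the single-time CLT verbatim; the tightness bound is comparatively routine once the moment machinery is in place.
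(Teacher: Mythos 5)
Your proposal is correct in outline, but it takes a genuinely different route from the paper: the paper does not prove Theorem~\ref{Moment Central Limit Theorem} at all, it simply invokes the known functional CLT for traces of the matrix Ornstein--Uhlenbeck process, citing \cite[Theorem 4.3.20]{Anderson_Guionnet_Zeitouni_2009}, where the result is established with stochastic-calculus/martingale CLT machinery rather than by moments. Your moment/genus-expansion argument is a legitimate self-contained alternative: joint cumulants of order at least three of the normalized traces vanish as $n\to\infty$, the mixed moments of the centered traces assemble into Wick's formula for a limiting covariance, moment determinacy of Gaussian vectors upgrades this to weak convergence of finite-dimensional distributions, and a Kolmogorov--Chentsov bound $\E|X_n(t)-X_n(s)|^{2p}\leq C_p|t-s|^{p}$ (every Wick contraction touching a factor $M(t)-M(s)$ contributes $O(|t-s|)$, and at least $p$ contractions must do so) yields tightness. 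What the citation buys the paper is precisely the parts you flag as delicate: the inhomogeneous covariance $e^{-|t-s|}$, the process-level tightness, and --- a point your sketch glosses over --- the fact that in the real ($\beta=1$) case the leading-order connected contributions to the fluctuation covariance include the orientation-reversing (``twisted'') pairings coming from the second delta in the GOE covariance, so ``planar on the cylinder'' alone does not give the right $\Sigma$, although this does not affect Gaussianity and hence not the statement as quoted; you would also need to add control of $X_n$ at a single time (immediate from your moment bounds) to close the tightness criterion. What your approach buys is self-containedness and consistency with the combinatorial toolkit (Theorem~\ref{Semicircular system asymptotics}, Wick's formula, non-crossing partitions) already used elsewhere in the paper.
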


For a proof see for instance \cite[Theorem 4.3.20]{Anderson_Guionnet_Zeitouni_2009}.

\begin{lm}\label{Factorization Lemma}
    Let $M(t)$ be the stationary GOE processes. Then for $k$ fixed times $t_1, \dots, t_k$ and exponents $j_1,\dots,j_k$ we have,
\begin{equation*}
    \E\left[\mathrm{Tr}(M(t_1)^{j_1})\dots\mathrm{Tr}(M(t_k)^{j_k})\right] =  \E\left[\mathrm{Tr}(M(t_1)^{j_1})\right]\dots\E\left[\mathrm{Tr}(M(t_k)^{j_k})\right]+O(n^{(j_1+\dots+j_k)/2+k-1}).
\end{equation*}
\end{lm}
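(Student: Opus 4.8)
The plan is to deduce the factorization statement directly from the central limit theorem, Theorem \ref{Moment Central Limit Theorem}, by writing each trace as its mean plus a fluctuation of the correct order. Concretely, set $X_\ell := n^{-j_\ell/2}\left(\mathrm{Tr}(M(t_\ell)^{j_\ell}) - \E[\mathrm{Tr}(M(t_\ell)^{j_\ell})]\right)$, so that $\mathrm{Tr}(M(t_\ell)^{j_\ell}) = \mu_\ell + n^{j_\ell/2} X_\ell$ where $\mu_\ell := \E[\mathrm{Tr}(M(t_\ell)^{j_\ell})]$. Expanding the product $\prod_{\ell=1}^k (\mu_\ell + n^{j_\ell/2} X_\ell)$ gives $\prod_\ell \mu_\ell$ plus $2^k - 1$ cross terms, each of which is a product of some subset of the $\mu_\ell$'s with $n^{(\sum_{\ell \in S} j_\ell)/2} \prod_{\ell \in S} X_\ell$ for a nonempty index set $S$. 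Taking expectations, I would bound $\left|\E\left[\prod_{\ell \in S} X_\ell\right]\right|$ by a constant independent of $n$: this is exactly what convergence in the sense of moments in Theorem \ref{Moment Central Limit Theorem} provides, since the joint moments of the vector $(X_1,\dots,X_k)$ converge to the (finite) moments of a Gaussian vector and are therefore bounded uniformly in $n$.

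The dominant error term comes from the subset $S$ that maximizes the exponent of $n$. Since $\mu_\ell = \Theta(n)$ (the normalized empirical spectral distribution of $M(t_\ell)$ converges to the semicircle law, so $\mathrm{Tr}(M(t_\ell)^{j_\ell}) = n \cdot \Theta(1)$ for $j_\ell$ even, and is $O(n^{j_\ell/2})$ in any case), each omitted factor $\mu_\ell$ contributes a factor $O(n)$ while each included factor $X_\ell$ contributes $n^{j_\ell/2}$ up to a bounded constant. Dropping exactly one $\mu_\ell$ (i.e. $|S| = 1$) yields a term of order $n^{j_\ell/2} \cdot \prod_{\ell' \neq \ell} n = O(n^{(j_1 + \cdots + j_k)/2 + (k-1)})$ once one checks that replacing a single $\mu_\ell \asymp n$ by $n^{j_\ell/2} \cdot O(1)$ is never larger than $n^{j_\ell/2} n^{k-1}$ times the remaining means — and since all $j_\ell \geq 1$ and each retained $\mu_{\ell'} = O(n^{j_{\ell'}/2} \cdot n^{1 - j_{\ell'}/2})$ with $1 - j_{\ell'}/2 \leq 1/2$, a short accounting shows every cross term is $O(n^{(j_1+\cdots+j_k)/2 + k - 1})$. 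Summing the finitely many ($2^k-1$) cross terms preserves this bound, giving the claim.

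The only genuinely delicate point — and the step I expect to be the main obstacle — is justifying that the bound on $\left|\E[\prod_{\ell \in S} X_\ell]\right|$ is uniform in $n$. Weak convergence alone does not control moments, so one really must invoke the "in the sense of moments" clause of Theorem \ref{Moment Central Limit Theorem}; a convergent sequence of real numbers is bounded, so each fixed joint moment $\E[\prod_{\ell \in S} X_\ell]$ is $O(1)$, and there are only finitely many such moments in play. If one instead wanted a self-contained argument, one could bound these centered-trace moments directly via the genus expansion for GOE (as in Theorem \ref{Semicircular system asymptotics} applied with correlated copies $M(t_\ell)$), observing that the leading non-crossing contributions cancel upon centering, leaving the subleading $O(1)$ after the $n^{-j_\ell/2}$ normalizations. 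Either route closes the lemma; I would take the first, citing Theorem \ref{Moment Central Limit Theorem} directly.
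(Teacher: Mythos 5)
Your proposal is essentially the paper's own proof: write each trace as mean plus centered fluctuation, expand the product over subsets, use the ``in the sense of moments'' clause of Theorem~\ref{Moment Central Limit Theorem} to get a uniform-in-$n$ bound on the joint moments of the normalized fluctuations, and then do the exponent bookkeeping with the size of the mean trace. The one thing to fix is your claim that $\mu_\ell=\E[\mathrm{Tr}(M(t_\ell)^{j_\ell})]=\Theta(n)$: the process $M(t)$ here is the \emph{unnormalized} GOE process, so $\E[\mathrm{Tr}(M(t_\ell)^{j_\ell})]=O(n^{j_\ell/2+1})$ (and it vanishes for odd $j_\ell$); the order $\Theta(n)$ would be correct for $\mathrm{Tr}\bigl((M/\sqrt{n})^{j_\ell}\bigr)$, and your later identity $\mu_{\ell'}=O(n^{j_{\ell'}/2}\cdot n^{1-j_{\ell'}/2})$ repeats the same confusion, so the ``short accounting'' as written underestimates the retained mean factors. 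With the correct order, the same bookkeeping gives that a cross term with $m\ge 1$ fluctuation factors is $O\bigl(n^{(j_1+\cdots+j_k)/2+k-m}\bigr)$, which for $m\ge 1$ is within the stated error $O\bigl(n^{(j_1+\cdots+j_k)/2+k-1}\bigr)$, so your conclusion stands after this one-line correction; this corrected count is exactly how the paper closes the argument.
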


\begin{proof}
    We can write
    \begin{align*}
        &\E\left[\mathrm{Tr}(M(t_1)^{j_1})\dots \mathrm{Tr}(M(t_k)^{j_k})\right] \\
        &= \E\left[\left(\E[\mathrm{Tr}(M(t_1)^{j_1})]+(\mathrm{Tr}(M(t_1)^{j_1})-\E[\mathrm{Tr}(M(t_1)^{j_1})])\right)\right.\\
        &\hspace{4cm}\dots\left.\left(\E[\mathrm{Tr}(M(t_k)^{j_k})]+(\mathrm{Tr}(M(t_k)^{j_k})-\E[\mathrm{Tr}(M(t_k)^{j_k})])\right)\right]\\
        &= \sum_{K \subset [k]} \E\bigg[\prod_{i \in K} \E[\mathrm{Tr}(M(t_i)^{j_i})] \prod_{i' \notin K} (\mathrm{Tr}(M(t_{i'})^{j_{i'}})-\E[\mathrm{Tr}(M(t_{i'})^{j_{i'}})])\bigg]\\
        &= \sum_{K \subset [k]} n^{\sum_{i' \in [k]\backslash K}j_{i'}/2}\prod_{i \in K} \E[\mathrm{Tr}(M(t_i)^{j_i})] \E\bigg[\prod_{i' \notin K} n^{-j_{i'}/2}(\mathrm{Tr}(M(t_{i'})^{j_{i'}})-\E[\mathrm{Tr}(M(t_{i'})^{j_{i'}})])\bigg]\\
        &= \E[\mathrm{Tr}(M(t_1)^{j_1})]...\E[\mathrm{Tr}(M(t_k)^{j_k})]+O(n^{(j_1+...+j_k)/2+k-1}).
    \end{align*}
    In the last line we used Theorem~\ref{Moment Central Limit Theorem} with the fact that $\E[\mathrm{Tr}(M(t_i)^{j_i})] = O(n^{j_i/2+1})$. 
\end{proof}

\subsection{Approximate Tridiagonal Entries}\label{Approximate thetas}
For future reference, we define an approximation for $\vartheta_k(t)$ which will play a critical role in our analysis.

\begin{defn}
    We define $\Tilde{M}(t):=(I-e_1  e_1^T)M(t)(I-e_1  e_1^T)$. In other words, $\Tilde{M}(t)$ is obtained by zeroing out the first row and column from $M(t)$, and $\Tilde{M}(t)$ is simply an $n\times n$ GOE process embedded in $\R^{(n+1) \times (n+1)}$. 
\end{defn}

\begin{defn}
    Let $\theta(t)=\theta_0(t):= \vartheta_0(t)$. More generally for $k \geq 0$, define $\theta_k(t):= P_k(\Tilde{M}(t)/\sqrt{n})\theta(t)$. Observe that $\theta(t)$ has $0$ first component, and when restricted to the rest of the $n$ components, $\theta(t) \sim \mathbf{OU}_{n}(1,\sqrt{2})$ which is independent of the GOE process $\Tilde{M}(t)$
\end{defn}

\begin{defn}\label{Def of approximate Tridiagonal Entries}
    Define $\Tilde{a}_0(t):=a_1(t)=e_1^TM(t)e_1$, and for $k\geq 1$, define 
    \begin{equation*}
        \Tilde{a}_{k+1}(t):=\sqrt{n}\theta(t)^T P_{2k-1}(\Tilde{M}(t)/\sqrt{n})\theta(t).
    \end{equation*}
    For $k \geq 0$ let 
    \begin{equation*}
        \Tilde{b}_k(t)^2-n:=n\theta(t)^T P_{2k}(\Tilde{M}(t)/\sqrt{n})\theta(t).
    \end{equation*}
\end{defn}

\begin{defn}
    We define two spaces spanned by vectors defined above as
    \begin{equation}
        \Theta_k(t):=\mathrm{Span}(\{\theta_j(t)\}_{j\leq k}),
    \end{equation}
    and
    \begin{equation}
        \mathcal{E}_k(t):= \mathrm{Span}(\{M(t)^{j_1}e_{j_2+2}\}_{0 \leq j_1 \leq k, 0 \leq j_2 \leq k-1}).
    \end{equation}
\end{defn}

\section{Main Result}\label{sec:mainresult}

\subsection{Statement of Main Result}
We begin by stating the main result of this work. 
\begin{thm}[Convergence of Tridiagonal Entries]\label{Main Theorem Statement} 
Fix a $k \in \mathbb{N}$ and let $\{a_j\}_{j=1}^n, \{b_j\}_{j=1}^{n-1}$ be the tridiagonal entry (processes) as in \eqref{eq:tridiagonalmatrix}, from the $n\times n$ tridiagonalized GOE process. Define 
\begin{equation*}
E_n(t):=\left(a_1(t),\dots, a_k(t), \frac{1}{\sqrt{n}}(b_1(t)^2-n),\dots ,\frac{1}{\sqrt{n}}(b_k(t)^2-n)\right).
\end{equation*} 
Then, $E_n(t)$ converges weakly in $C[0,\infty)$ to a vector process $E(t)=(A_1(t),\dots,A_k(t),B_1(t),\dots,B_k(t))$ as $n\to\infty$, where
\begin{equation*}
    \frac{1}{\sqrt{2}}E(t) \sim \mathbf{OU}_{2k}(1,3,\dots,2k-1,2,4,\dots,2k).
\end{equation*} 
More explicitly, the components $A_j(t) \in C[0,\infty)$ and $B_j(t) \in C[0,\infty)$ are jointly-independent centered Ornstein--Uhlenbeck processes with covariances given as $\mathrm{Cov}(A_j(t),A_j(s))=2e^{-(2j-1)|t-s|}$ and $\mathrm{Cov}(B_j(t),B_j(s))=2e^{-2j|t-s|}$, for $j=1,\dots,k$.
\end{thm}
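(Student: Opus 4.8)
The plan is to prove convergence in two stages: first replace the true tridiagonal entries $a_j(t), \frac{1}{\sqrt n}(b_j(t)^2-n)$ by the approximate ones $\tilde a_j(t), \frac{1}{\sqrt n}(\tilde b_j(t)^2 - n)$ built from $\tilde M(t)$ and $\theta(t)$ in Section~\ref{Approximate thetas}, and then compute the limiting law of the approximate quantities directly by the method of moments. For the first stage I would show that the spaces $\Theta_k(t)$ and $\mathcal{E}_k(t)$ are asymptotically close: concretely, that $\|\theta_k(t) - \vartheta_k(t)\cdot(\text{suitable normalization})\|$ and the discrepancy between $P_k(\tilde M/\sqrt n)\theta$ and the genuine Householder-generated vectors vanish in probability, uniformly on compacts in $t$. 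The recursion for $\vartheta_k$ (the three-term recurrence produced by the tridiagonalization) is a perturbation of the clean three-term recurrence \eqref{Three term reccurence} defining $P_k$, with errors coming from $\frac{1}{\sqrt n}a_{k+1}$, from $\|\vartheta_{k-1}\| - 1$ (which is $O(n^{-1/2})$ since $\|x_k\|\sim\chi_{n+1-k}$), and from $Q_k$ acting nontrivially on lower coordinates; one bounds these via the Burkholder--Davis--Gundy / Kolmogorov continuity estimates already invoked for the $C^{1/2-\epsilon}$ regularity, so the error propagates through finitely many ($\le k$) steps and stays $o(1)$ in $C[0,T]$. This is the step I expect to be the main obstacle — it is essentially the content of Sections~\ref{Approximation Section} and the Appendix — because one must track the perturbation of a polynomial recursion in a large random matrix and control it uniformly in time, not just pointwise.

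For the second stage, write $\tilde a_{k+1}(t) = \sqrt n\,\theta(t)^T P_{2k-1}(\tilde M(t)/\sqrt n)\theta(t)$ and $\frac{1}{\sqrt n}(\tilde b_k(t)^2 - n) = \sqrt n\,\theta(t)^T P_{2k}(\tilde M(t)/\sqrt n)\theta(t)$. Condition on the GOE process $\tilde M(t)$; since $\theta(t)$ restricted to its last $n$ coordinates is $\mathbf{OU}_n(1,\sqrt 2)$ independent of $\tilde M$, the quadratic form $\sqrt n\,\theta^T P_\ell(\tilde M/\sqrt n)\theta$ has conditional mean $\frac{1}{\sqrt n}\mathrm{Tr}\,P_\ell(\tilde M/\sqrt n) = \frac{1}{\sqrt n}\mathrm{Tr}\,P_\ell(\tilde M/\sqrt n)$ (using the normalization in Definition~\ref{Ornstein-Uhlenbeck Process}) and a conditional fluctuation of order $O_P(1)$. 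The fluctuation of $\theta$ around its mean, computed via Wick's theorem / Gaussian moments of quadratic forms, produces a Gaussian contribution whose covariance is $\frac{c}{n}\mathrm{Tr}\,P_\ell(\tilde M(t)/\sqrt n)P_{\ell'}(\tilde M(s)/\sqrt n)$ plus lower order; meanwhile, by Theorem~\ref{Moment Central Limit Theorem}, $\frac{1}{\sqrt n}\mathrm{Tr}\,P_\ell(\tilde M(t)/\sqrt n)$ is itself asymptotically a centered Gaussian process (the normalized traces of Chebyshev polynomials of GOE are exactly the natural basis diagonalizing the limiting Gaussian field). Using the product-sum identity $P_j P_k = \sum_{\ell \le [(j+k)/2]} P_{j+k-\ell}$ and its corollaries $P_k^2 - P_{k-1}^2 = P_{2k}$, one identifies which $P_m$ appear, and Theorem~\ref{Semicircular system asymptotics} with the OU covariance $\E[(\tilde M(t))_{ij}(\tilde M(s))_{ij}] \propto e^{-|t-s|}$ (so $\varphi$ gets an $e^{-|t-s|}$ factor per pair) gives, after summing over non-crossing pair partitions, that the joint limit is Gaussian with $\mathrm{Cov}(A_j(t),A_j(s)) = 2e^{-(2j-1)|t-s|}$, $\mathrm{Cov}(B_j(t),B_j(s)) = 2e^{-2j|t-s|}$, and all cross-covariances vanishing (the parity and degree bookkeeping forces the cross terms to zero). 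Matching the decay rate $2j-1$, resp.\ $2j$, to the degree $2j-1$, resp.\ $2j$, of the Chebyshev polynomial is exactly the mechanism that produces the claimed OU parameters.

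Finally, tightness in $C[0,\infty)$ follows from the $C^{1/2-\epsilon}$-moment bounds on $M(t)$ propagated through the (finitely many) algebraic operations of the tridiagonalization, combined with Theorem~\ref{Moment Central Limit Theorem}'s process-level statement, so that convergence of finite-dimensional distributions (established by the moment computation above) upgrades to weak convergence of processes. Assembling the two stages — approximation error $\to 0$ in $C[0,T]$ and $\tilde E_n(t) \Rightarrow E(t)$ — and using Slutsky gives $E_n(t) \Rightarrow E(t)$, completing the proof.
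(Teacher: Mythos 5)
Your overall plan is the same as the paper's: first replace $(a_j,\;n^{-1/2}(b_j^2-n))$ by the Chebyshev quadratic forms of Definition~\ref{Def of approximate Tridiagonal Entries} (this is Theorem~\ref{Entry Approximation Thm}, which you correctly identify as the hard step but leave as a sketch; the paper does it with Gaussian concentration plus chaining rather than BDG), and then obtain the limit of the approximate entries by a Wick/moment computation plus tightness (Theorem~\ref{Weak Convergence of Approximate Tridiagonal Entries}). So the architecture is fine. The genuine problem is in the covariance bookkeeping of your second stage, which as written does not produce the rates $2j-1$ and $2j$. Conditionally on $\Tilde{M}$, since $\E[\theta_i(t)\theta_i(s)]=e^{-|t-s|}/n$, the Gaussian quadratic-form covariance is
\begin{equation*}
\mathrm{Cov}\!\left(\sqrt{n}\,\theta(t)^TA\,\theta(t),\;\sqrt{n}\,\theta(s)^TB\,\theta(s)\right)=\frac{2e^{-2|t-s|}}{n}\mathrm{Tr}(AB),
\end{equation*}
and your formula $\frac{c}{n}\mathrm{Tr}\big(P_{\ell}(\Tilde{M}(t)/\sqrt{n})P_{\ell'}(\Tilde{M}(s)/\sqrt{n})\big)$ with a constant $c$ drops the $e^{-2|t-s|}$ coming from the temporal decorrelation of $\theta$. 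Combined with the two-time orthogonality $\frac{1}{n}\E\,\mathrm{Tr}\big(P_j(\Tilde{M}(t)/\sqrt{n})P_k(\Tilde{M}(s)/\sqrt{n})\big)=\delta_{j,k}e^{-k|t-s|}+O(1/n)$ — which is the crux, is the paper's Lemma~\ref{Semicircular Cov}, and which you use implicitly but never establish (it needs the inclusion--exclusion over non-crossing pairings, not just Theorem~\ref{Semicircular system asymptotics} applied termwise) — the correct OU rate is the Chebyshev degree \emph{plus two}, not the degree. With the indexing you yourself quote ($\Tilde{a}_{k+1}$ built from $P_{2k-1}$), your stated mechanism ``decay rate $=$ degree'' would give $A_j$ rate $2j-3$, contradicting the theorem; your final answer agrees with the statement only through a compensating index slip. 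The case $j=1$ makes the issue concrete: $\frac{1}{\sqrt{n}}(b_1^2-n)=\sqrt{n}(\|\theta\|^2-1)$ involves $P_0$, and its covariance $2e^{-2|t-s|}$ comes entirely from the $\theta$-decorrelation; under ``rate $=$ degree'' it would be constant in time.

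Two smaller repairs in the same step: the conditional-mean term $\frac{1}{\sqrt{n}}\mathrm{Tr}\,P_{\ell}(\Tilde{M}(t)/\sqrt{n})$ is not an asymptotically nondegenerate Gaussian process — by the linear-statistics CLT (Theorem~\ref{Moment Central Limit Theorem}) the fluctuations of $\mathrm{Tr}\,P_{\ell}(\Tilde{M}/\sqrt{n})$ are $O(1)$ and its mean is $O(1)$ for $\ell\ge 1$, so after dividing by $\sqrt{n}$ this term vanishes. That is actually what you need (otherwise it would add a spurious component to the limiting covariance), so the decomposition should be stated that way rather than as an extra Gaussian contribution. Once the decorrelation factor is restored, the two-time Chebyshev trace asymptotics are proved, and the vanishing of the trace term is justified, your conditioning argument does yield the finite-dimensional limits with independence across entries (from $\delta_{j,k}$ orthogonality), and the tightness/Hölder argument you describe matches the paper's.
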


\subsection{Outline for proof of Theorem \ref{Main Theorem Statement}}
The proof of Theorem~\ref{Main Theorem Statement} can be divided into two main steps, Theorems~\ref{Entry Approximation Thm} and~\ref{Weak Convergence of Approximate Tridiagonal Entries}. The proofs of these steps can be found in Sections~\ref{Approximation Section} and \ref{Moment Method Section} respectively. The following theorem is the first step, with detailed proof given in Section~\ref{Approximation Section}. We approximate $E_n(t)$ by a random vector using approximated tridiagonal entries in Definition~\ref{Def of approximate Tridiagonal Entries} whose moments are easier to compute. 
\begin{thm}\label{Entry Approximation Thm}
    For some constants $C_{k,T}$ and $n_0(k,T)$, we almost surely have 
    \begin{gather*}
        \sup_{t \in [0,T]}|a_j(t)-\tilde{a}_j(t)| \leq C(k,T)\frac{\log(n)^j}{\sqrt{n}},\\        
        \sup_{t \in [0,T]}\frac{1}{\sqrt{n}}\left|b_j(t)^2-\tilde{b}_j(t)^2\right| \leq C(k,T)\frac{\log(n)^{2j}}{\sqrt{n}},
    \end{gather*}
    for all $n \geq n_0(k,T)$ and $j \leq k$. In particular, letting 
    \begin{equation*}
    \Tilde{E}_n(t):=\left(\Tilde{a}_1(t),\dots,\Tilde{a}_k(t), \frac{1}{\sqrt{n}}(\Tilde{b}_1(t)^2-n),\dots,\frac{1}{\sqrt{n}}(\Tilde{b}_k(t)^2-n)\right),
    \end{equation*}
    we almost surely have
    \begin{equation}
        \lim_{n \to \infty} \sup_{t \in [0,T]} \|E_n(t)-\Tilde{E}_n(t)\|=0.
    \end{equation}
\end{thm}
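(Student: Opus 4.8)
The plan rests on the classical fact that Householder tridiagonalization started from $e_1$ is, in exact arithmetic, the Lanczos iteration started from $e_1$. First I would make the reduction explicit: since every reflector $Q_j(t)$ fixes $e_1$ and the $e_1$-components of the vectors produced are absorbed into the off-diagonal terms, the tridiagonalization of $M(t)$ is, after the first step, the Lanczos iteration for $\Tilde{M}(t)$ started at $\theta(t)$. Consequently the normalized Lanczos vectors $\hat q_j(t)$ are the Gram--Schmidt orthonormalization of the Krylov sequence $\theta(t),\Tilde{M}(t)\theta(t),\dots$, they lie in $\Theta_{j}(t)$ (equivalently in $\mathcal{E}_j(t)$), and $a_{k+1}(t)$ and $b_{k+1}(t)^2$ are, up to the explicit powers of $\sqrt n$, determined by the finitely many moments $m_j(t):=\theta(t)^\top(\Tilde{M}(t)/\sqrt n)^j\,\theta(t)$ with $j\le 2k+1$. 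I would also record here that the approximate entries of Definition~\ref{Def of approximate Tridiagonal Entries} are built from the $\theta_j(t)=P_j(\Tilde{M}(t)/\sqrt n)\theta(t)$, hence are themselves fixed finite combinations of the same moments; in particular everything in sight is a ratio of bounded-degree polynomials in the $m_j(t)$, and the operator norm $\|\Tilde{M}(t)/\sqrt n\|$ never appears on its own.

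The heart of the argument is the comparison of these two finite expressions. Writing $c_j=\int x^j\,d\mu_{sc}$ for the semicircle moments, the $\theta_j$ are asymptotically orthonormal ($\langle\theta_i,\theta_j\rangle=m_0\int P_iP_j\,d\mu_t\to\delta_{ij}$), so I would expand the true Lanczos vector $\hat q_{k-1}(t)$ in the basis $\{\theta_j(t)\}_{j\le k-1}$: after normalization it equals $\theta_{k-1}(t)$ plus a Gram--Schmidt correction with coefficients of size $O(\max_{i\le 2k}|m_i(t)-c_i|)$. Substituting into $a_{k+1}(t)=\sqrt n\,\langle\hat q_{k-1},(\Tilde{M}/\sqrt n)\hat q_{k-1}\rangle/\|\hat q_{k-1}\|^2$ and expanding via the recurrence \eqref{Three term reccurence} and the product--sum identities of Section~\ref{Chebyshev Polynomial Section}, the dangerous first-order cross terms arising from the Gram--Schmidt correction cancel precisely against the lower-order Chebyshev product terms in $\langle\theta_{k-1},(\Tilde{M}/\sqrt n)\theta_{k-1}\rangle$ (this cancellation is exactly what the Chebyshev identities encode), leaving $\langle\hat q_{k-1},(\Tilde{M}/\sqrt n)\hat q_{k-1}\rangle=\theta^\top P_{2k-1}(\Tilde{M}/\sqrt n)\theta+O\!\big((\max_i|m_i-c_i|)^2\big)$, and hence $|a_{k+1}(t)-\Tilde{a}_{k+1}(t)|=\sqrt n\cdot O\!\big((\max_i|m_i(t)-c_i|)^2\big)$; the same scheme, using $P_k^2-P_{k-1}^2=P_{2k}$, handles $\tfrac1{\sqrt n}\big(b_j(t)^2-\Tilde{b}_j(t)^2\big)$. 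It is essential that the surviving error be genuinely quadratic in the moment fluctuations, since only then does the $\sqrt n$ (resp.\ $n$) scaling fail to destroy the estimate.

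It remains to control the moments uniformly on $[0,T]$ and almost surely. Conditionally on $\Tilde{M}(t)$, each $m_j(t)$ is a Gaussian quadratic form in the independent vector $\theta(t)$, so by the Hanson--Wright inequality $|m_j(t)-\tfrac1n\mathrm{Tr}((\Tilde{M}(t)/\sqrt n)^j)|$ is $O(\sqrt{\log n/n})$ with probability $1-n^{-\omega(1)}$ at fixed $t$, while Lemma~\ref{Factorization Lemma} and Theorem~\ref{Moment Central Limit Theorem} give $\tfrac1n\mathrm{Tr}((\Tilde{M}(t)/\sqrt n)^j)-c_j=O(1/n)$ with overwhelming probability. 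A union bound over a polynomial-size net of $[0,T]$, together with the a.s.\ Hölder-$(\tfrac12-\epsilon)$ continuity of $t\mapsto m_j(t)$ (inherited from $\theta$ and $\Tilde{M}$, as in the lemma following Algorithm~\ref{alg:householder}), upgrades this to $\sup_{t\in[0,T]}|m_j(t)-c_j|\le C(j,T)\sqrt{\log n/n}$ almost surely for $n$ large, for each of the finitely many $j\le 2k+1$; an analogous argument gives $\sup_{t\in[0,T]}\|\Tilde{M}(t)/\sqrt n\|\le 3$ a.s.\ for $n$ large, keeping the denominators $\|\hat q_{k-1}\|^2$ and the Gram--Schmidt step nondegenerate. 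Feeding these into the previous paragraph yields $\sup_{t\in[0,T]}|a_j(t)-\Tilde{a}_j(t)|\le C(k,T)\log n/\sqrt n$ and the corresponding bound for the $b_j$, and the final display $\lim_{n\to\infty}\sup_{t\in[0,T]}\|E_n(t)-\Tilde{E}_n(t)\|=0$ is then immediate.

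The main obstacle is the second paragraph: carrying out, uniformly in $k$, the bookkeeping showing that the Gram--Schmidt corrections of the true Lanczos vectors cancel the lower-order Chebyshev product terms so that the residual is quadratically small in $\max_i|m_i-c_i|$ --- i.e.\ verifying that $P_{2k-1}$ and $P_{2k}$ are precisely the surviving first-order contributions --- all the while never introducing a bare power of $\|\Tilde{M}\|\sim\sqrt n$. A secondary difficulty is propagating these errors cleanly through the $k$ successive Lanczos/Householder steps.
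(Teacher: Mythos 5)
Your overall strategy is sound and it is a genuinely different organization of the proof from the one in the paper, although it rests on the same two pillars (uniform-in-$t$ concentration at scale $\sqrt{\log n/n}$ and the Chebyshev product identities). You identify the Householder reduction exactly with the Lanczos recursion for $\Tilde{M}(t)$ started at $\theta(t)$, so that $a_{j}(t)$ and $b_{j}(t)^2$ become smooth functions of the finitely many moments $m_i(t)=\theta(t)^T(\Tilde{M}(t)/\sqrt n)^i\theta(t)$, and you then Taylor-expand this moments-to-Jacobi-coefficients map at the semicircle moments, claiming the linear term is exactly the Chebyshev combination defining $\Tilde{a}_j,\Tilde{b}_j$ and the remainder is quadratic in $\max_i|m_i-c_i|$, which after multiplication by $\sqrt n$ gives the $\log n/\sqrt n$ rate. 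The paper instead never passes through the moment map: it proves by induction on the reflectors (with bookkeeping in the subspaces $\Theta_k(t)$ and $\mathcal{E}_k(t)$, see \eqref{Householder Product} and \eqref{vartheta = theta + epsilon}) that $\vartheta_k(t)\approx\theta_k(t)/\|\theta_{k-1}(t)\|$, deduces $a_{k+1}\approx\sqrt n(\theta_k-\theta_{k-2})^T\theta_{k-1}$ and $b_{k+1}^2$ close to $\|\theta_k\|^2/\|\theta_{k-1}\|^2$ as in \eqref{a_k approx equation}--\eqref{b_k approx equation}, and only then invokes the identities $(P_k-P_{k-2})P_{k-1}=P_{2k-1}$, $P_k^2-P_{k-1}^2=P_{2k}$ \emph{exactly}, so it never needs to verify a genuinely quadratic remainder in the moment fluctuations; working with the near-orthonormal vectors $\theta_j$ does that cancellation automatically. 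Your route, if completed, is arguably cleaner conceptually (it makes transparent why $P_{2j-1}$ and $P_{2j}$ appear: they are the derivative of the Jacobi map at the semicircle), and your check that nondegeneracy of the Gram/Hankel data and $\sup_t\|\Tilde{M}(t)/\sqrt n\|\le 3$ keep all constants $k$-dependent only is the right hypothesis to impose.

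Two caveats. First, the step you yourself flag as the main obstacle — that the first-order term of the expansion is exactly $\theta^TP_{2k-1}(\Tilde{M}/\sqrt n)\theta$ (resp.\ $P_{2k}$) and the remainder is $O\bigl((\max_i|m_i-c_i|)^2\bigr)$ with constants depending only on $k$ — is asserted, not proved; it is true, but it is precisely the content that occupies the bulk of Section 7 of the paper (the inductive propositions leading to \eqref{a_k approx equation} and \eqref{b_k approx equation}), so in its current form your proposal defers rather than replaces the real work. In carrying it out you must also handle the boundary cases carefully: the normalization $m_0=\|\theta\|^2$ enters every entry, and the lowest-index entries ($\Tilde a_1$, $b_1^2-n=n(m_0-1)$) must be matched with the correct Chebyshev index so that the discrepancy is a product of two fluctuations rather than a single one; an off-by-one here produces an $O(1)$ error that the $\sqrt n$ prefactor does not forgive. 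Second, for the uniform and almost-sure statements, Theorem~\ref{Moment Central Limit Theorem} and Lemma~\ref{Factorization Lemma} are distributional/expectation statements and do not by themselves give "overwhelming probability" control of $\tfrac1n\mathrm{Tr}\bigl((\Tilde{M}(t)/\sqrt n)^j\bigr)-c_j$; you need Gaussian concentration for the (truncated, Lipschitz) trace functional, exactly the device of Lemma~\ref{Concentration Bound} and its appendix proof, combined with your net/Hölder argument in $t$; and the "almost surely for all $n\ge n_0(k,T)$" conclusion requires summable failure probabilities plus Borel--Cantelli, which is how the paper passes from its probability bound to the statement of Theorem~\ref{Entry Approximation Thm}. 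With those points supplied, your argument would constitute a valid alternative proof.
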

\begin{rmk}
    These $\Tilde{a}_j$ and $\Tilde{b}_j$ can be expressed in terms of the Chebyshev polynomials of the second kind. This is not so surprising as $n \to \infty$ the tridiagonal model for G$\beta$E has spectral measure given by the semicircle law, which has Chebyshev polynomials of the second kind as its associated orthogonal polynomials, and the entries of infinite Jacobi matrices can be expressed in terms of the orthogonal polynomials with respect to their spectral measure (for further details see \cite{deift2000orthogonal}).
\end{rmk}

The key observation for proving Theorem~\ref{Entry Approximation Thm}, is that the algorithm for obtaining the vectors $\{\vartheta_j(t)\}_{j=0}^{k}$ from the Householder tridiagonalization procedure, is approximately the result of applying the Gram-Schmidt process to the set of vectors $\left\{(\Tilde{M}(t)/\sqrt{n})^j \theta(t)\right\}_{j=0}^{k}$. However, due to the large dimension $n$, concentration inequalities tells us that the inner products concentrate around their expectation and thus for a fixed $t \in [0,T]$,
\begin{equation*}
    \begin{split}
        \left((\Tilde{M}(t)/\sqrt{n})^{j_1} \theta(t)\right)^T\left((\Tilde{M}(t)/\sqrt{n})^{j_2} \theta(t)\right) &\approx \E\left[\left((\Tilde{M}(t)/\sqrt{n})^{j_1} \theta(t)\right)^T\left((\Tilde{M}(t)/\sqrt{n})^{j_2} \theta(t)\right)\right]\\
        & = \frac{1}{n}\E\left[\mathrm{Tr}\left((\Tilde{M}(t)/\sqrt{n})^{j_1}(\Tilde{M}(t)/\sqrt{n})^{j_2}\right)\right]\\
        &\approx \int_{-2}^{2} x^{j_1} \cdot x^{j_2} \frac{1}{2\pi}\sqrt{4-x^2} dx,
    \end{split}
\end{equation*}
where the last step follows by the semicircle law. As a result, we naturally have the approximation $\vartheta_j(t) \approx \theta_j(t) = P_j(\Tilde{M}(t)/\sqrt{n})\theta(t)$ where $\{P_j(x)\}$ are the orthogonal polynomials with respect to the semicircle measure $\frac{1}{2\pi}\sqrt{4-x^2} \mathbbm{1}_{|x|\leq 2}$.

With slightly more careful approach we obtain $\vartheta_k(t) \approx \frac{\theta_k(t)}{\|\theta_{k-1}(t)\|}$. Thus after plugging into the expressions for $a_j(t)$ and $b_j(t)$, after some calculation, we obtain 
\begin{equation*}
    \frac{1}{\sqrt{n}}a_{j+1}(t) \approx (\theta_{j}(t)-\theta_{j-2}(t))^T\theta_j(t),
\end{equation*}
and
\begin{equation*}
    \frac{1}{n}(b_{j+1}(t)^2-n) \approx \frac{\|\theta_j(t)\|^2}{\|\theta_{j-1}(t)\|^2}-1.
\end{equation*}
Finally the exact expressions for $\Tilde{a}_j(t)$ and $\Tilde{b}_j(t)$ come from applying an algebraic identity for Chebyshev polynomials to the above expressions. The precise concentration and chaining bounds, which are needed to make all the approximations uniform, are deferred to the appendix.

The second step, carried out in detail in Section~\ref{Moment Method Section}, is to prove the weak convergence for the  approximate entries of our tridiagonal matrix.

\begin{thm}[Weak Convergence of Approximate Tridiagonal Entries]\label{Weak Convergence of Approximate Tridiagonal Entries} 
Fix a $k \in \mathbb{N}$ and $T>0$ and define a vector of approximated tridiagonal entry processes,
\begin{equation*}
    \Tilde{E}_n(t):=\left(\Tilde{a}_1(t),\dots ,\Tilde{a}_k(t), \frac{1}{\sqrt{n}}(\Tilde{b}_1(t)^2-n),\dots ,\frac{1}{\sqrt{n}}(\Tilde{b}_k(t)^2-n)\right).
\end{equation*} Then, $\Tilde{E}_n(t)$ converges weakly in $C[0,T]$ to a vector process $E(t)=(A_1(t),\dots ,A_k(t),B_1(t),\dots,B_k(t))$, distributed as $\frac{1}{\sqrt{2}}E(t) \sim \mathbf{OU}_{2k}(1,3,\dots ,2k-1,2,4,\dots ,2k)$. More explicitly, the components $A_j(t) \in C[0,T]$ and $B_j(t) \in C[0,T]$ are jointly-independent centered Ornstein--Uhlenbeck processes with covariances $\mathrm{Cov}(A_j(t),A_j(s))=2e^{-(2j-1)|t-s|}$ and $\mathrm{Cov}(B_j(t),B_j(s))=2e^{-2j|t-s|}$, for $j=1,\dots ,k$.
\end{thm}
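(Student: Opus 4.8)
The plan is to prove weak convergence in $C[0,T]$ in the standard two steps — convergence of finite-dimensional distributions (fidis) and tightness — and for the fidis to use the method of moments, which is legitimate because the limiting law is Gaussian and hence moment-determined. First note that $\Tilde a_1(t)=a_1(t)=e_1^T M(t)e_1$ is, for every $n$, exactly a copy of $\sqrt{2}\,\mathrm{OU}(1)$, and, being a single entry of the GOE matrix $M$, it is exactly independent of $(\theta,\Tilde M)$ and hence of every other coordinate of $\Tilde E_n$; this coordinate may therefore be treated separately. Every remaining coordinate has the form $\phi_m(t):=\sqrt{n}\,\theta(t)^T P_m(\Tilde M(t)/\sqrt{n})\,\theta(t)$ for a suitable degree $m\ge 1$ (odd $m$ producing the $\Tilde a_j$, even $m$ the $\Tilde b_j$, with limiting mean-reversion rate $m+2$ in each case). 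So I would fix times $s_1,\dots,s_p$ and degrees $m_1,\dots,m_p\ge 1$ and compute $\E\big[\prod_{\alpha=1}^p \phi_{m_\alpha}(s_\alpha)\big]$, checking it converges to the corresponding Gaussian (Wick) moment.

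Since $\theta$ and $\Tilde M$ are independent, I would condition on $\Tilde M$ and Wick-expand in the Gaussian vector $\theta$: the conditional expectation becomes a sum over pair partitions of the $2p$ endpoints of the $p$ quadratic forms, and grouping the induced Kronecker deltas into cycles turns each term into a product over cycles $C$ of $\big(\prod_{\alpha,\beta\ \text{consecutive in}\ C} g(s_\alpha,s_\beta)\big)\,\mathrm{Tr}\big(\prod_{\alpha\in C} P_{m_\alpha}(\Tilde M(s_\alpha)/\sqrt{n})\big)$, where $g(s,s')=\E[\theta(s)_i\theta(s')_i]=e^{-|s-s'|}/n$. Taking $\E$ over $\Tilde M$, I would factor the products of traces via Lemma~\ref{Factorization Lemma} and evaluate each factor with Theorem~\ref{Semicircular system asymptotics} (after expanding each $P_{m_\alpha}$ in monomials in $\Tilde M(s_\alpha)/\sqrt{n}$), obtaining $\E\big[\mathrm{Tr}\big(\prod_{\alpha\in C}P_{m_\alpha}(\Tilde M(s_\alpha)/\sqrt{n})\big)\big]=n\,\tau\big(\prod_{\alpha\in C}P_{m_\alpha}(a_{s_\alpha})\big)+O(1)$, where $\tau$ denotes the trace functional of the jointly semicircular family $(a_s)$ with $\tau(a_s a_{s'})=e^{-|s-s'|}$. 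The decisive point is the power count: a cycle of length $\ell$ carries $g$-factors of order $n^{-\ell}$ and a trace of order $n$, hence contributes $n^{1-\ell}$, against the overall prefactor $n^{p/2}$; moreover a length-one cycle is suppressed by an extra factor $n^{-1}$ since $\mathrm{Tr}(P_m(\Tilde M/\sqrt{n}))=O(1)$ (because $P_m$ is orthogonal to $1$ in $L^2$ of the semicircle for $m\ge 1$). Consequently, among cycle decompositions of the $p$ forms with all parts of size $\ge 2$, only the one consisting of $p/2$ two-cycles survives in the limit; in particular all odd moments vanish.

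For a pair $\{\alpha,\beta\}$ of forms there are exactly two two-cycles on its four endpoints, each contributing $g(s_\alpha,s_\beta)^2\,\mathrm{Tr}\big(P_{m_\alpha}(\cdot)P_{m_\beta}(\cdot)\big)$ with $\Tilde M$-expectation $\sim n\,g(s_\alpha,s_\beta)^2\,\tau\big(P_{m_\alpha}(a_{s_\alpha})P_{m_\beta}(a_{s_\beta})\big)$. Here I would invoke the Chebyshev--semicircular identity $\tau(P_m(a)P_{m'}(b))=\delta_{m,m'}\rho^{m}$, valid for a jointly semicircular pair $(a,b)$ with $\tau(a^2)=\tau(b^2)=1$ and $\tau(ab)=\rho$; it follows from the free-Fock-space identity $P_m(c_e+c_e^*)\Omega=e^{\otimes m}$ for a unit vector $e$ (an immediate induction from the recurrence \eqref{Three term reccurence}), applied with $a=c_{e_1}+c_{e_1}^*$ and $b=c_f+c_f^*$ for $f=\rho e_1+\sqrt{1-\rho^2}\,e_2$, giving $\tau(P_m(a)P_{m'}(b))=\langle e_1^{\otimes m},f^{\otimes m'}\rangle=\delta_{m,m'}\rho^m$. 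Thus each pair contributes, in the limit, $2\,\delta_{m_\alpha,m_\beta}\,e^{-(m_\alpha+2)|s_\alpha-s_\beta|}$ (the ``$+2$'' coming from $g^2=e^{-2|s-s'|}/n^2$), the powers of $n$ cancel exactly, and summing over perfect matchings produces precisely the Wick formula of a centered Gaussian family with covariance $2\,\delta_{m,m'}\,e^{-(m+2)|s-s'|}$. Reading off the degrees attached to the $\Tilde a_j$ and $\Tilde b_j$ and reinstating the exact $a_1$ coordinate, this is exactly the claimed vector of jointly independent Ornstein--Uhlenbeck processes, so all fidis converge.

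For tightness I would apply Kolmogorov's criterion. Writing $\phi_m(t)-\phi_m(s)$ as a sum of three pieces each first order in an increment — two carrying $\theta(t)-\theta(s)$ and one carrying $P_m(\Tilde M(t)/\sqrt{n})-P_m(\Tilde M(s)/\sqrt{n})$ — Gaussian hypercontractivity for polynomials of bounded degree gives $\E[(\phi_m(t)-\phi_m(s))^{2q}]\le C_q\big(\E[(\phi_m(t)-\phi_m(s))^2]\big)^q$, and the $L^2$-norm is controlled by the same trace estimates, using $\E[(\theta(t)_i-\theta(s)_i)^2]\lesssim |t-s|/n$ and $\E\big[\mathrm{Tr}\big((P_m(\Tilde M(t)/\sqrt{n})-P_m(\Tilde M(s)/\sqrt{n}))^2\big)\big]\approx 2n\,(1-e^{-m|t-s|})\lesssim n|t-s|$, to yield $\E[(\phi_m(t)-\phi_m(s))^{2q}]\le C_q|t-s|^q$ uniformly in $n$; with $q=2$ and $\{\Tilde E_n(0)\}$ having bounded moments, Kolmogorov gives tightness in $C[0,T]$, which together with the fidi convergence finishes the proof. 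The step I expect to be hardest is the fidi computation: carrying out the combined $\theta$-Wick and genus expansions rigorously, verifying that the errors in Lemma~\ref{Factorization Lemma} and all sub-leading cycle contributions (including mixed short/long-cycle diagrams) are genuinely $o(1)$, and establishing the identity $\tau(P_m(a)P_{m'}(b))=\delta_{m,m'}\rho^m$ in the correlated semicircular setting.
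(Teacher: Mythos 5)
Your proposal is correct in substance, and its core --- the finite-dimensional distributions via the method of moments --- is essentially the paper's argument: you condition on $\Tilde{M}$, Wick-expand in $\theta$, organize the pairings into cycles (this is the content of Lemmas \ref{Perm Expansion} and \ref{Multiplicity of Perm}), factor the resulting products of traces with Lemma \ref{Factorization Lemma}, and run the same power count in which fixed points are suppressed because $\E[\mathrm{Tr}\,P_m]=O(1)$ and cycles of length $\geq 3$ by $n^{1-\ell}$, so that only perfect matchings within the $a$-block and within the $b$-block survive, each pair contributing $2\delta_{m,m'}e^{-(m+2)|s-s'|}$. You genuinely diverge in two sub-arguments. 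First, the key two-time orthogonality (the paper's Lemma \ref{Semicircular Cov}) you derive from the free-Fock-space identity $P_m(c_e+c_e^*)\Omega=e^{\otimes m}$ applied to a correlated semicircular pair, rather than the paper's inclusion--exclusion over non-crossing pair partitions using \eqref{P_k(x) explicit formula}; this is a clean and correct alternative resting on Theorem \ref{Semicircular system asymptotics} plus the standard Fock-space realization, and it even recovers the $O(1/n)$ rate if you track the error term there. Second, for tightness you use Kolmogorov's moment criterion with hypercontractivity on a fixed chaos, whereas the paper puts $\Tilde{E}_n$ into a H\"older ball with high probability by sub-Gaussian chaining (Lemma \ref{Tightness Lemma}) and uses compact embedding; your route is arguably more elementary, but you must make the increment bound uniform down to very small $|t-s|$: the relation $\E[\mathrm{Tr}((P_m(\Tilde{M}(t)/\sqrt n)-P_m(\Tilde{M}(s)/\sqrt n))^2)]\approx 2n(1-e^{-m|t-s|})$ carries an additive $O(1)$ error from Lemma \ref{Semicircular Cov} which dominates when $|t-s|\lesssim 1/n$, so you should instead observe that this expected trace is a fixed-degree polynomial in $e^{-|t-s|}$ with coefficients $O(n)$ that vanishes at $t=s$, giving the clean bound $\lesssim n|t-s|$ (and similarly control $\mathrm{Var}(\mathrm{Tr}\,\Delta P)$). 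One further bookkeeping point: with rate $m+2$ your restriction $m\geq 1$ cannot produce the coordinate $B_1$ of rate $2$; that coordinate is $\sqrt n(\theta^T\theta-1)$, i.e.\ $m=0$ with explicit centering, and there the fixed-point suppression must come from the centering rather than from $\mathrm{Tr}(P_0)=n$ being $O(1)$ --- a wrinkle that the indexing in Definition \ref{Def of approximate Tridiagonal Entries} also glosses over, but which your write-up should handle explicitly.
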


The proof of Theorem \ref{Weak Convergence of Approximate Tridiagonal Entries} can be broken down into the convergence of finite dimensional marginals, and showing tightness. Tightness will follow from quantitative continuity estimates which are proven in the same section of the appendix as the chaining argument for the first step above. 

The convergence of finite dimensional distributions will follow from the convergence of moments. A key quantity in the calculation will be $\E[\mathrm{Tr}(P_{j_1}(\Tilde{M}(t_1)/\sqrt{n}))P_{j_2}(\Tilde{M}(t_2)/\sqrt{n}))]$. From the semicircle law we know the result is $\approx \delta_{j_1,j_2}$ when $t_1=t_2$. A combinatorial argument involving non-crossing partitions and an explicit formula for $P_j(x)$, allows us to compute the value for any $t_1$ and $t_2$. We will prove that
\begin{equation}\label{Orthogonality Computation}
    \frac{1}{n}\E\left[\mathrm{Tr}\left(P_{j_1}(\Tilde{M}(t_1)/\sqrt{n}))P_{j_2}(\Tilde{M}(t_2)/\sqrt{n})\right)\right] \approx \delta_{j_1,j_2}e^{-j_1 |t_1-t_2|}.
\end{equation}
\noindent
After this preliminary computation, Wick's formula will express joint moments of $\Tilde{E}_n(t)$ as a sum indexed by pairings $\pi \in \mathcal{P}_2(\{1,2\}\times J)$ for some index set $J$. Another combinatorial argument allows us to convert this sum into another one indexed by permutations in $\mathcal{S}(J)$. At this point the leading order terms are extracted through combining \eqref{Orthogonality Computation} together with Lemma~\ref{Factorization Lemma}. \\

Combining our two steps, we can prove the main theorem as follows:

\begin{proof}[Proof of Theorem \ref{Main Theorem Statement} given Theorems \ref{Entry Approximation Thm} and \ref{Weak Convergence of Approximate Tridiagonal Entries}]

For any $f \in C_b(C[0,T])$,
\begin{equation}
    \E[f(E_n)] = \E[f(\Tilde{E}_n)]+\E[f(E_n)-f(\Tilde{E}_n)]
\end{equation}
but $\E[f(\Tilde{E}_n)] \to \E[f(E)]$ by Theorem~\ref{Entry Approximation Thm}, and $\E[f(E_n)-f(\Tilde{E}_n)] \to 0$ by the bounded convergence theorem, since $f$ is bounded and $f(E_n)-f(\Tilde{E}_n) \to 0$ a.s. by continuity of $f$ and the fact that $\|E_n-\Tilde{E}_n\|_{C[0,T]} \to 0$ by Theorem~\ref{Weak Convergence of Approximate Tridiagonal Entries}. Thus, $E_n \to E$ weakly in $C[0,T]$.

Next we can equip $C[0,\infty)$ with the metric $d(f,g):= \sum_{k=1}^{\infty} \frac{1}{2^k} \max_{t \in [0,k]} (1 \wedge |f(t)-g(t)|)$. Moreover take a sequence of functions $\eta_m(t) \in C[0,\infty)$ with $\eta_m \equiv 1$ on $[0,m]$, $\eta \equiv 0$ outside of $[0,2m]$, and $\|\eta\|_{L^{\infty}} =1$. Fixing any $f \in C_b(C[0,\infty))$, for all $\nu(t) \in C[0,2m]$, we can define $f_m(\nu):= f(\eta_m \nu)$. We clearly have $f_m \in C_b(C[0,2m])$. From the triangle inequality we now have,
\begin{equation*}
    \begin{split}
        &\limsup_{n\to \infty}|f(E_n)-f(E)|\\
        &\leq \limsup_{n\to \infty}|f_m(E_n)-f_m(E)|+\limsup_{n\to \infty} \|f\|d(\eta_m E_n,E_n)\\
        & \leq \|f\|2^{1-m}.
    \end{split}
\end{equation*}
Since this is true for all $m$, we have $f(E_n) \to f(E)$.
\end{proof}


\section{Numerical Experiments}\label{Numerics}

We perform Householder tridiagonalization (Algorithm~\ref{alg:householder}) on the $\beta=1,2,4$ G$\beta$E processes to obtain tridiagonal matrix valued processes. We compare various statistics of the tridiagonalized G$\beta$E process, against its limiting process, a tridiagonal matrix valued process based on our tridiagonal model described in Section~\ref{sec:mainresult}. 


\subsection{Diagonal and Off-diagonal Entry Processes }\label{sec:abcompare}

We begin by comparing the entry processes directly. In Theorem~\ref{Main Theorem Statement} we show that the $j^\text{th}$ diagonal entry process of the tridiagonalized GOE process, $a_j(t)$, is weakly converging ($n\to\infty$) to an Ornstein-Uhlenbeck process $A_j(t)$ defined by
\begin{equation}\label{eq:aj}
    d A_j(t) = -(2j-1) A_j(t) dt + 2\sqrt{2j-1}dW(t),
\end{equation}
where $dW(t)$ is the white noise. This is $\sqrt{2}\mathrm{OU}(2j-1)$ following Definition~\ref{Ornstein-Uhlenbeck Process}. In Figure~\ref{fig:meanvara} we first compare the mean and variance of $a_j(t)$ and $A_j(t)$ at each $t$, for some arbitrarily picked $j$. The two plots essentially show that $a_j(t)$ and $A_j(t)$ have nearly the same mean and variance at each time. 

\begin{figure}[h]
    \centering
    \includegraphics[width=0.95\textwidth]{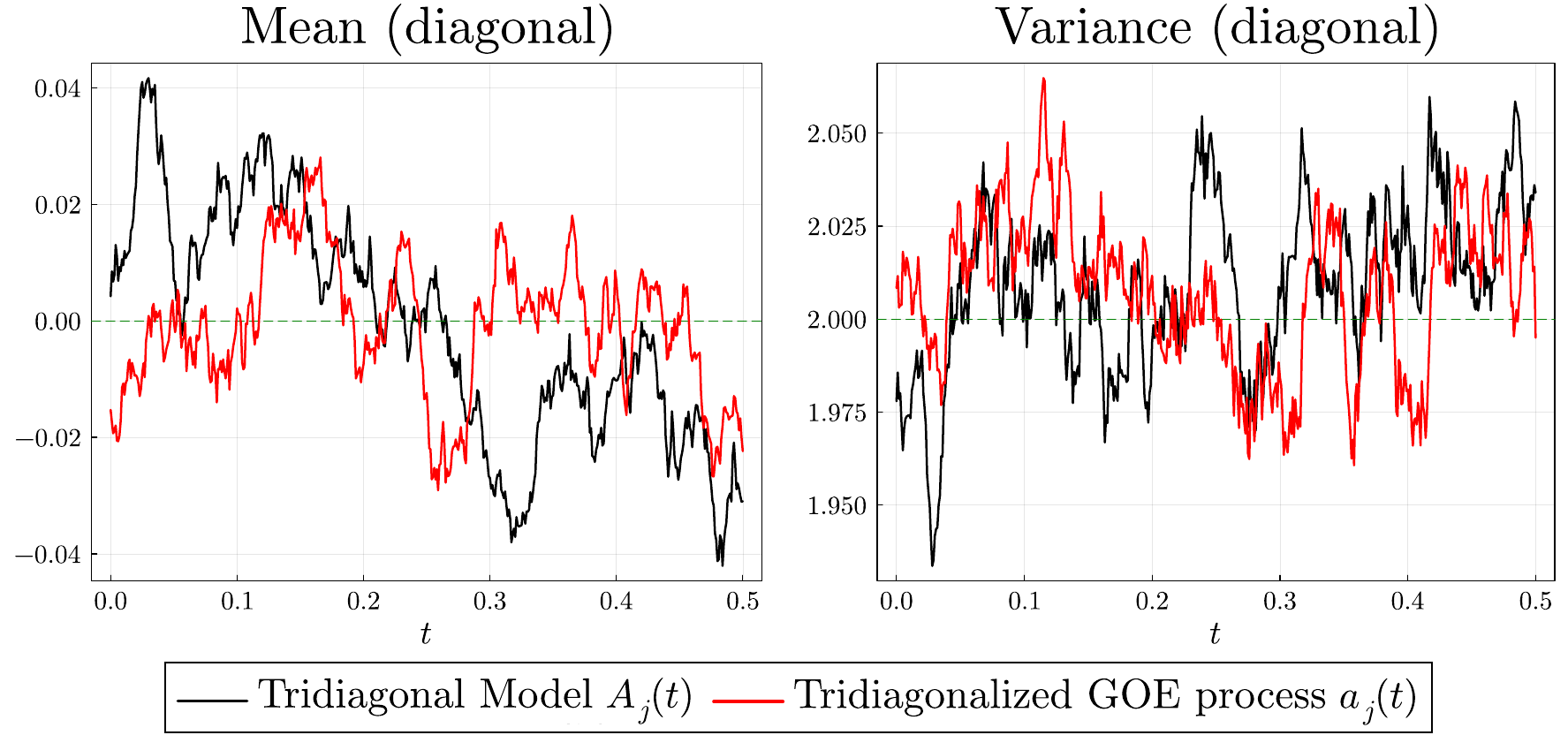}
    \caption{Comparing empirical means and variances of $A_j(t)$ (black) and $a_j(t)$ (red) for an arbitrary choice $j=9$. One can observe that the mean is around zero and the variance is near 2 (green dashed line) for both processes at each time, as their stationary distiribution is $\mathcal{N}(0, 2)$. We used 10000 samples of $2000\times 2000$ tridiagonalized GOE processess and 10000 samples of the Ornstein-Uhlenbeck process \eqref{eq:aj} with $n=2000$. For both samples we sampled from $t=0$ to $t=0.5$ with $dt=10^{-3}$.}\label{fig:meanvara}
\end{figure}

\begin{figure}[h]
    \centering    \includegraphics[width=0.5\textwidth]{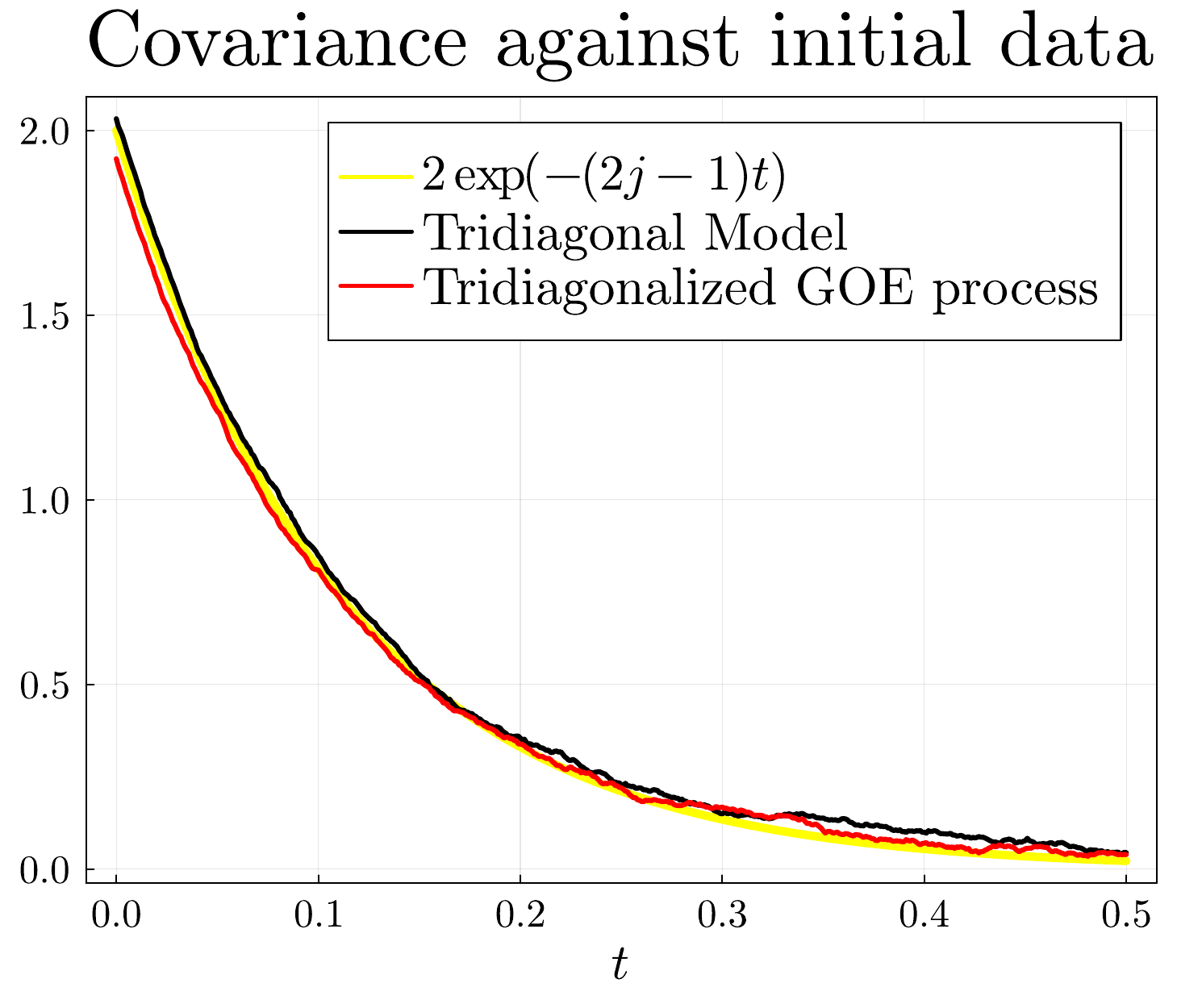}
    \caption{Sample covariances $\text{Cov}(a_j(0), a_j(t))$ and $\text{Cov}(A_j(0), A_j(t))$ with an arbitrary choice of $j=5$. The yellow line is the theoretical covariance of the process $A_j(t)$. We used the same samples used in Figure~\ref{fig:meanvara}.}\label{fig:cov_a}
\end{figure}

We then compare the covariance of the two processes (against their initial data) in Figure~\ref{fig:cov_a}. Theorem~\ref{Main Theorem Statement} suggests that these processes should have covariance $e^{-(2j-1)|t_1-t_2|}$ for given two $t$ values $t_1, t_2$, as $n\to\infty$. 
\clearpage

Similarly, we compare the subdiagonal entry processes. 
Let $b_j(t)$ be the $j^\text{th}$ subdiagonal entry process of the tridiagonalized GOE process. From Theorem~\ref{Main Theorem Statement} we have $\frac{1}{\sqrt{n}}(b_j(t)^2 - n)$ weakly converging (with $n\to\infty$) to the Ornstein-Uhlenbeck process $B_j(t)$ defined by
\begin{equation*}
    d B_j(t) = -2j B_j(t)dt + 2\sqrt{2j} dW(t). 
\end{equation*}
We compute a stochastic process $\hat{b}_j(t):=\sqrt{\sqrt{n}B_j(t)+n}$ to compare directly with $b_j(t)$. Figure~\ref{fig:meanvarcov_b}  shows that the mean, variance and covariance of the two stochastic processes $\hat{b}_j(t)$ and $b_j(t)$ align, which verifies the weak convergence. 

\begin{figure}[hh]
    \centering
    \includegraphics[width=0.9\textwidth]{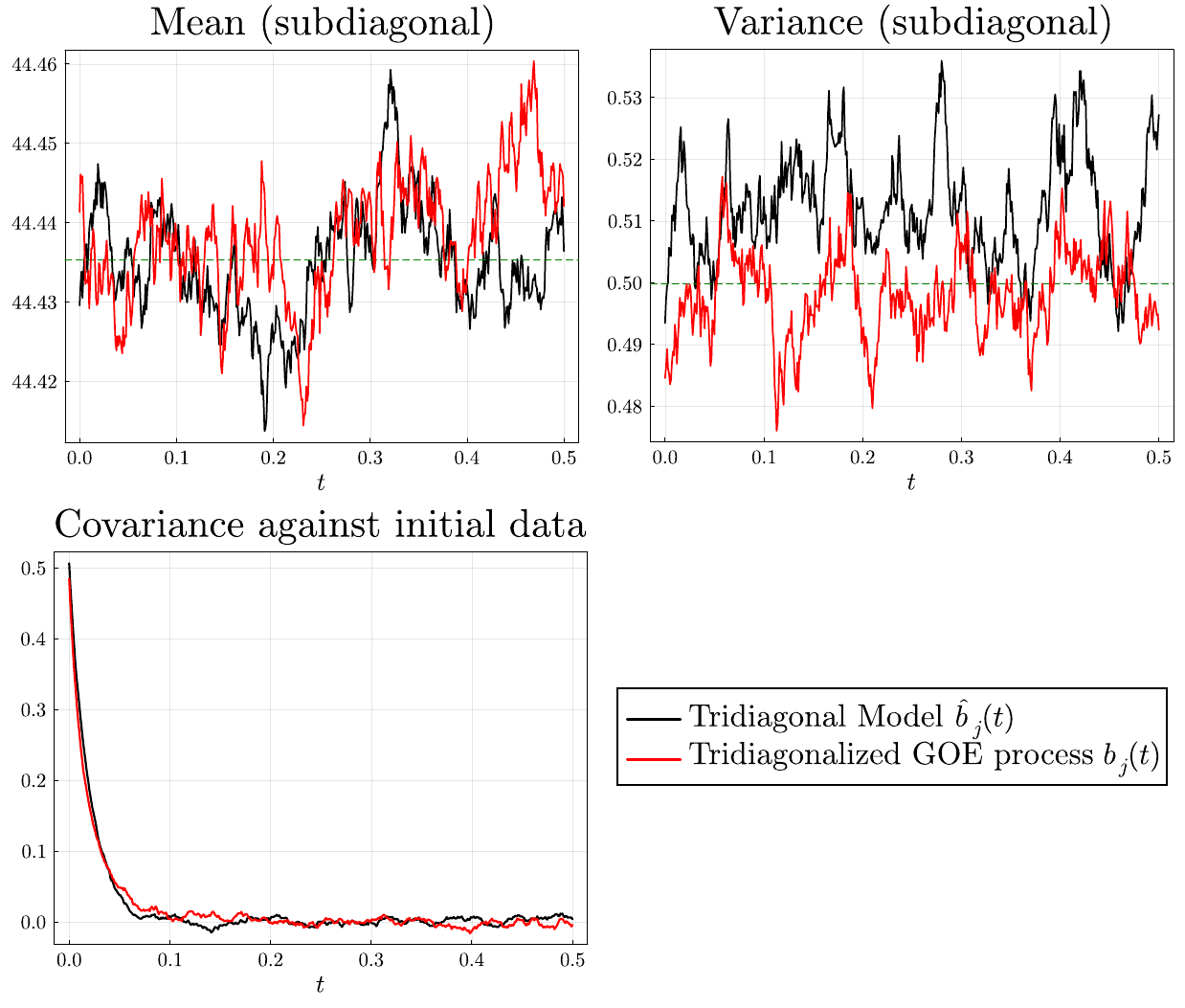}
\caption{Comparing empirical means and variances of $\hat{b}_j(t)$ (black) and $b_j(t)$ (red) for an arbitrary choice $j=25$. One can observe that for each $t$, the mean is around $\sqrt{2}\frac{\Gamma((n-j+1)/2)}{\Gamma((n-j)/2)} \sim \sqrt{n-j} = 44.44$, and the variance is around $n-j-2(\frac{\Gamma((n-j+1)/2)}{\Gamma((n-j)/2)})^2 \sim \frac{1}{2}$, which is represented by the green dashed lines. We used the samples from Figure~\ref{fig:meanvara} for the tridiagonalized GOE model $b_j(t)$. We used 10000 samples of the Ornstein-Uhlenbeck process $B_j(t)\sim \mathrm{OU}(2j)$ and computed $\hat{b}_j(t)$ for each sample.}\label{fig:meanvarcov_b}
\end{figure}

\clearpage 

\subsection{The Diagonal Entry  is not a Gaussian Process}\label{sec:notgaussian}
We show that the diagonal entry process $a_j(t)$ of the tridiagonalized GOE process is not a Gaussian process, when $n$ is not large enough. From \eqref{StationaryTridiagonal} the stationary distribution of $a_j(t)$ should be a Gaussian distribution with mean zero and variance 2. In Figure~\ref{fig:smallkurtosis} we show that $a_j(t)$ is not a Gaussian process by demonstrating nonzero (excess) kurtosis of $a_j(0)+a_j(t)$, despite each $a_j(0), a_j(t)$ being Gaussians.

\begin{figure}[h]
    \centering
    {\includegraphics[width=0.9\textwidth]{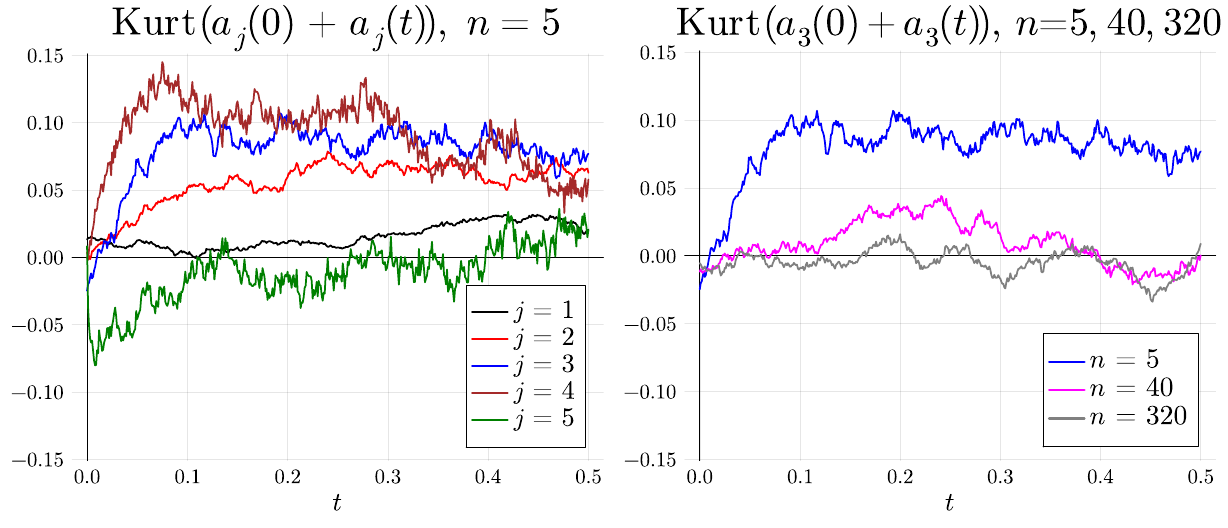}}
    \caption{
    The first plot shows that the sum $a_j(0)+a_j(t)$ is not a Gaussian (for $j>1$) as we observe nonzero (excess) kurtosis at each time. Note that for $j=1$ the process is indeed Gaussian. The second plot illustrates that the process $a_j(t)$ gets closer to a Gaussian process as $n$ gets larger, with an arbitrary choice of $j=3$. (Notice that the blue line represents $a_3(t)$ for $n=5$ in both bottom plots.) Recall from Theorem~\ref{Main Theorem Statement} $a_j(t)$ converges weakly to a Ornstein-Uhlenbeck process as $n\to\infty$. We sampled 100000 tridiagonalized GOE processes for each $n=5, 40, 320$.}\label{fig:smallkurtosis}
\end{figure}

\subsection{Largest Eigenvalue Process}

In this experiment we compare the largest eigenvalue of the G$\beta$E process and the largest eigenvalue process of our tridiagonal model. The largest eigenvalue $\lambda_{\max}(t)$ of the GUE process is a widely studied topic, since it is known to converge as $n\to\infty$ to the Airy$_\text{2}$ process $\mathcal{A}(t)$ under the following renormalization and rescaling,
\begin{equation*}
    \lim_{n\to\infty} n^{1/6}\left(\lambda_{\max}(n^{-1/3}t) - 2\sqrt{n}\right) = \mathcal{A}(t). 
\end{equation*}
(However, intriguingly, the largest eigenvalue process of the GOE process does not align with the Airy$_\text{1}$ process \cite{BoFe08}.)

Computing the largest eigenvalue of an $n\times n$ matrix with large $n$ is computationally expensive as the complexity of eigenvalue algorithms are $O(n^3)$ in general. However we can efficiently approximate the largest eigenvalue of a large GOE matrix by taking the largest eigenvalue of the $10 n^{1/3}\times 10 n^{1/3}$ upper left corner of the tridiagonal model~\eqref{StationaryTridiagonal} \cite[Section 10]{edelman2005random}. In the following experiment, we do the same for each $t$ to efficiently approximate the true largest eigenvalue process. 

\par We sample the largest eigenvalue process $\lambda_{\max{}}(t)$ of (1) the full $n\times n$ GOE process, (2) $k\times k$ upper left corner process of our tridiagonal model (using $A_j(t), B_j(t)$ in Theorem~\ref{Main Theorem Statement}), and (3) $k\times k$ upper left corner of the tridiagonalized GOE process. We compare covariances of the three largest eigenvalue processes against their initial data in Figure~\ref{fig:eig_cov}.

\begin{figure}[hh]
    \centering
    \includegraphics[width=0.9\textwidth]{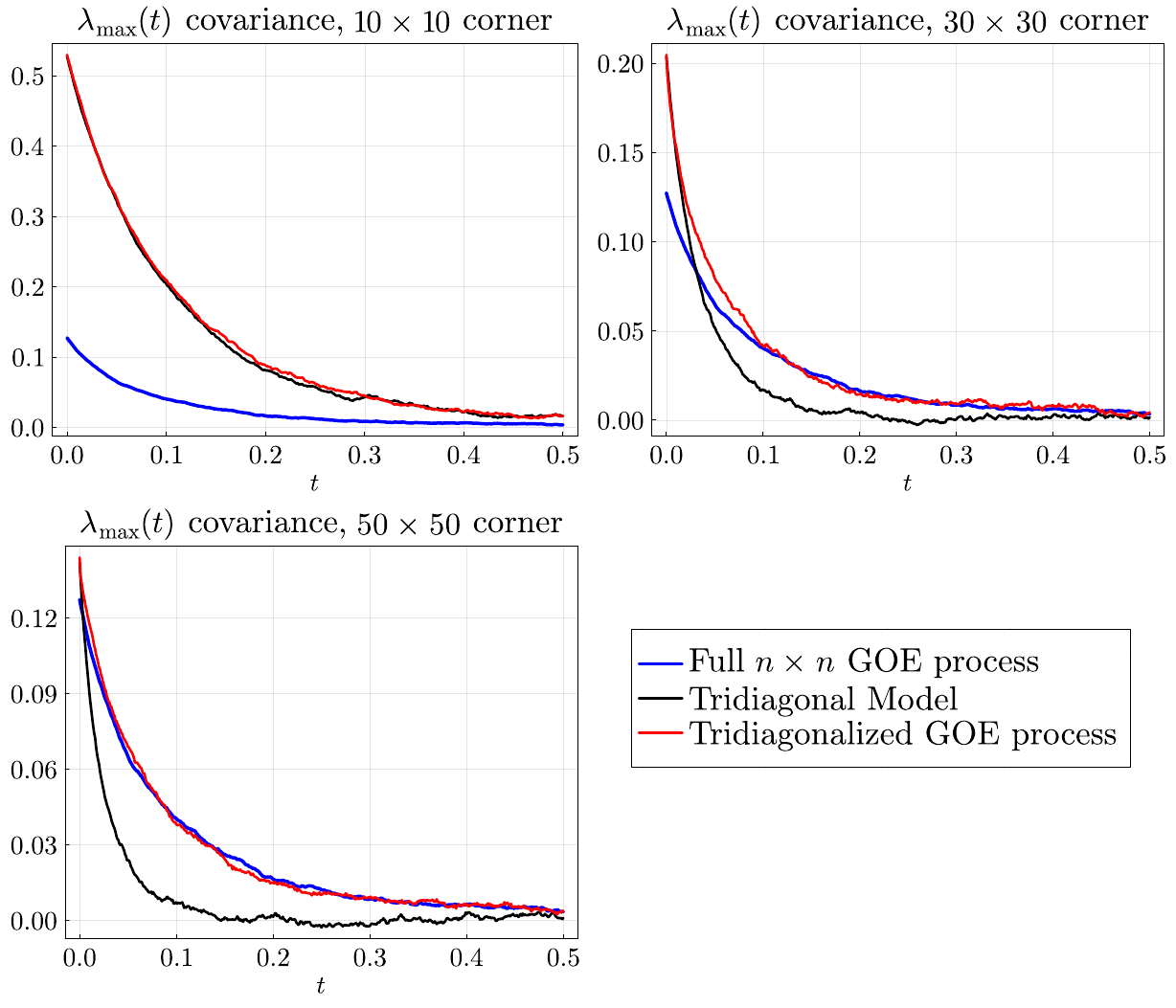}
    \caption{Empirical covariances (against initial data) of the largest eigenvalue processes $\lambda_{\max{}}(t)$ for three different stochastic processes. The blue lines are computed from $\lambda_{\max{}}(t)$ of the full $2000\times 2000$ GOE process. The black lines are covariances of $\lambda_{\max{}}(t)$ of the $k\times k$ tridiagonal model (Theorem~\ref{Main Theorem Statement}) with $n=2000$. The red lines are from $\lambda_{\max{}}(t)$ of the $k\times k$ upper-left corner of the tridiagonalized GOE process. We plot for $k=10, 30, 50$. Observe that for small $k$ the $\lambda_{\max{}}(t)$ of tridiagonal model (black) is close to $\lambda_{\max{}}(t)$ of the tridiagonalized GOE process (red). This shows that the underlying dynamics of the tridiagonalized GOE process is well approximated by our tridiagonal model with $A_j(t), B_j(t)$, when $k$ is small. As $k$ gets larger, the $\lambda_{\max{}}(t)$ of tridiagonalized GOE process starts to approximate the true $\lambda_{\max{}}(t)$ process better, and $\lambda_{\max{}}(t)$ of the tridiagonal model starts to deviate from the other two. For each plot we used 2000 samples of all three processes.}\label{fig:eig_cov}
\end{figure}

\clearpage

\subsection{The cases $\beta=2,4$}

In this section we perform numerical experiment on (tridiagonalized) G$\beta$E processes with $\beta=2,4$. Any numerical simulations with the GOE process can be simply modified to the GUE and GSE processes. We plot covariance against initial data for diagonal entry ($\beta=2$) and off-diagonal entry ($\beta=4$) to confirm that Theorem~\ref{Main Theorem Statement} holds for $\beta=2, 4$. 

\begin{figure}[h]
    \centering
    \includegraphics[width=0.9\textwidth]{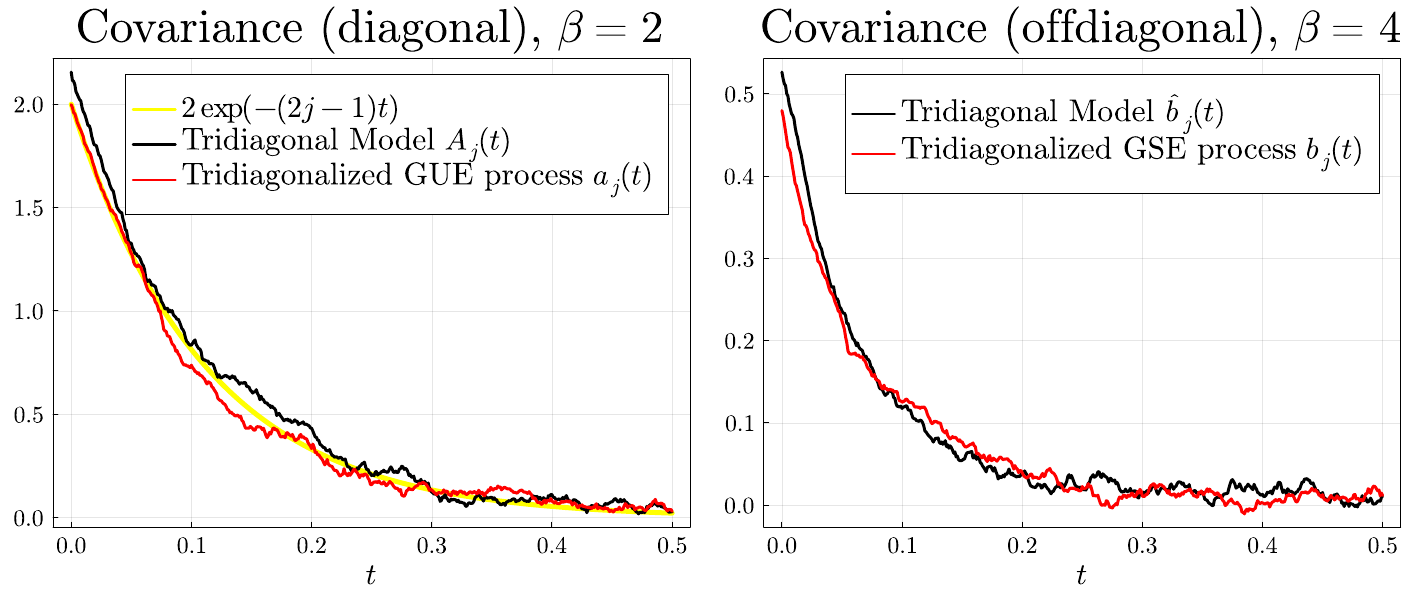}
    \caption{Plots of covariance against $t=0$ of tridiagonalized GUE diagonal process (left) and tridiagonalized GSE off-diagonal process (right). We sampled 1000 tridiagonalized $200\times 200$ GUE and GSE processes, from $t=0$ to $t=0.5$ with $dt=10^{-3}$. For $\beta=2$ we plot covariances of the $j^\text{th}$ diagonal process from the tridiagonalized GUE process (red) and Ornstein-Uhlenbeck process $A_j(t)$ (black), with an arbitrary choice of $j=5$. The yellow line represents analytic covariance for the Ornstein-Uhlenbeck process. For the GSE process we plot covariances of $b_j(t)$ and $\hat{b}_j(t)$ (see Section~\ref{sec:abcompare}) with an arbitrary choice of $j=7$.}\label{fig:eigmeanvar}
\end{figure}

\subsection{Implementation Details}

To sample the entries of the tridiagonalized GOE (GUE, GSE) process we discretized time $t\in[0, 0.5]$ with $dt=0.001$. For each entry we sampled Ornstein-Uhlenbeck process with discretized time and for each $t$ series of Householder reflectors are applied for the tridiagonalization. We used $n$ as large as $2000$ and the entry processes are collected using Xeon-P8 CPU with 16 cores from MIT supercloud server \cite{reuther2018interactive}.

\section{Past Work, Future Directions, and Open Problems}\label{open problem section}

\subsection{Past Work}

First, we briefly summarize two related previous works \cite{AlGu13, HoPa17} on matrix models for $\beta$-DBM. In particular \cite{HoPa17} provides tridiagonal models for $\beta$-DBM in a different direction. To explain their results we first recall that in \cite{DuEd02}, the authors establish a bijection between symmetric tridiagonal matrices and their eigenvalues together with the spectral weights (first entry of each eigenvector). Thus, to construct a symmetric tridiagonal model for $\beta$-DBM, we have the freedom of choosing the evolution of the spectral weights. In \cite{HoPa17}, the authors classify the tridiagonal models for $\beta$-DBM through an SDE involving the spectral weights, for a large class of spectral weights. Moreover, in the frozen (constant) spectral weight case, they provide asymptotics for finite upper left blocks as $n \to \infty$ through a limiting SDE they satisfy. Our result concerns a different situation since the spectral weights for the GOE ensemble are a.s. not constant. Moreover our approach also largely differs in proof techniques. In another paper \cite{li2010beta}, the author proposes a dynamical tridiagonal $\beta$-Hermite ensemble, however, this model does not exactly have eigenvalues evolving according to $\beta$-DBM, and when contrasted with our main result, does not have a form which results from applying the Householder tridiagonalization procedure to the G$\beta$E process.

Lastly in a series of recent works \cite{gorin2024airy,dunkl2024eigenvalues,huang2024convergence}, the authors found explicit formulas for the moments and Laplace transform of the $\beta$-Airy line ensemble by finding formulas for the G$\beta$E corners process and taking appropriate scaling limits. The proofs of these impressive results do not use the tridiagonalization of the G$\beta$E process studied in this paper.

\subsection{Future Directions and Open Problems}

Our proof of Theorem~\ref{Main Theorem Statement} should generalize in a straightforward manner to the $\beta=2,4$ cases. The precise statement for this generalization is the following. \\

\noindent\textbf{Problem 1:} Suppose $M^{(n)}(t)$ is a sequence of stationary $n \times n$ matrix processes evolving according to the G$\beta$E process   for $\beta \in \{2,4\}$. After applying the Householder tridiagonalization to obtain diagonal entries $a_1^{(n)}(t), a_2^{(n)}(t),\dots,a_n^{(n)}(t)$ and off-diagonal entries $b_1^{(n)}(t), b_2^{(n)}(t),\dots,b_n^{(n)}(t)$, prove that for any fixed $k$,
\begin{equation*}
    \begin{split}
        ~&\left(a_1(t),\dots,a_k(t), \frac{1}{\beta \sqrt{n}}(b_1(t)^2-\beta n),\dots,\frac{1}{\beta\sqrt{n}}(b_k(t)^2-\beta n)\right) \\
        &\hspace{5cm}\to \left(A_1(t),\dots,A_k(t),B_1(t),\dots,B_k(t)\right),
    \end{split}
\end{equation*}
as $n\to\infty$, with the entries of the limiting vector being jointly-independent as in the $\beta=1$ case.\\

A less straightforward direction is to try to extend our results to the $\beta$-Laguerre and $\beta$-Jacobi ensembles. Here we briefly pose a problem for the L$\beta$E process. \\

\noindent\textbf{Problem 2:} Take $\beta \in \{1,2,4\}$. Suppose $M^{(n)}(t)$ is a sequence of $n \times n$ stationary matrix process evolving according to $\beta$-Laguerre diffusion. Can we extract a limit of the $k \times k$ upper left corner of $M^{(n)}(t)$ for $k$ fixed and $n \to \infty$?   \\

The next problem is to prove that the pattern extends to entries further down the tridiagonal. One way to formulate this problem would be the following.\\

\noindent \textbf{Problem 3:}
For what threshold $k(n) \to \infty$ as $n \to \infty$, can we prove that for any $k_1(n),\dots,k_j(n) \to \infty$ and $k_1(n),\dots,k_j(n) \leq k(n)$,
\begin{equation*}
    \begin{split}
        ~&\left(a_{k_1(n)}^{(n)}(t/k_1(n)),\dots,a_{k_j(n)}^{(n)}(t/k_j(n)), \frac{1}{\sqrt{\beta(n-k_1(n))}} \left(b_{k_1(n)}^{(n)}(t/k_1(n))^2-\beta (n-k_1(n))\right),\right. \\
        &\hspace{5cm}\dots, \left.\frac{1}{\sqrt{\beta(n-k_1(n))}}\left(b_{k_j(n)}^{(n)}(t/k_j(n))^2-\beta (n-k_j(n))\right)\right)\\
        &\hspace{8cm}\to (A_1(t),\dots,A_j(t),B_1(t),\dots,B_j(t))
    \end{split}
\end{equation*}
weakly in $C[0,\infty)$, where the components $A_i(t) \in C[0,\infty)$ and $B_i(t) \in C[0,\infty)$ are independent centered Ornstein--Uhlenbeck process with covariances given as $\mathrm{Cov}(A_i(t),A_i(s))=2e^{-2|t-s|}$ and $\mathrm{Cov}(B_j(t),B_j(s))=2e^{-2|t-s|}$ for $i=0,1,\dots,j$. \\

The proofs in this paper should be fairly easily modified if one takes $k(n)=(\log(n))^{\alpha}$ for some $\alpha$ for instance, however we would like to be able to take $k(n)>>n^{1/3}$ to extract spectral information. However, our numerical tests do not provide strong evidence for this regime. It is still not entirely clear whether this is due to numerical instability, since for large $n$ the tridiagonal entries have large oscillations as $k$ gets larger which causes inaccurate simulation.

Finally we state the problem which initially motivated this paper.\\

\noindent \textbf{Problem 4:} If $\lambda_1^{(n)}(t)>\dots>\lambda_k^{(n)}(t)$ are the $k$ largest eigenvalues of $\beta$-Dyson Brownian motion on $n$ particles as in \eqref{Dyson BM}, do
\begin{equation*}
    n^{1/6}(2\sqrt{n}-\lambda_j^{(n)}(t/(2n^{1/3}))) \to \lambda_j(t)
\end{equation*}
as $n \to \infty$, for $j=1,\dots,k$ jointly in distribution, where $\lambda_1(t)\leq \lambda_2(t) \leq \dots$ are the eigenvalues of a time dependent stochastic Airy operator appearing in \cite{ramirez2011beta}. To the knowledge of the authors, there is no consensus even on a conjecture for what the limiting dynamic stochastic operator should be, or even if it exists.\\

We end the section by discussing a natural candidate for the dynamical stochastic Airy operator suggested by Theorem \ref{Main Theorem Statement}, followed by a remark explaining why it is not the operator required in Problem 4.

If one assumes that the pattern of entries in Theorem \ref{Main Theorem Statement} continues down the entire tridiagonal matrix, than following the informal derivation from tridiagonal matrix to stochastic differential operator proposed in \cite{EdSu07}, then one is led to the dynamical Airy stochastic operator of definition \ref{def:stochastic-airy-op}. 

\begin{defn}\label{def:stochastic-airy-op}
    We define \textit{Ornstein--Uhlenbeck space-time white noise} as a one parameter family of random continuous functions of time into space distributions $\xi_{a}:[0,\infty)\to \mathcal{D}'((0,\infty))$, such that for any test functions $f_1,\dots,f_k \in C_c^{\infty}((0,\infty))$, $(\la \xi_{a}(t_1),f_1 \ra,\dots,\la \xi_{a}(t_k), f_k \ra)$ is a centered Gaussian process, and
    \begin{equation}
        \E[\la \xi_{a}(t), f_i\ra \la \xi_{a}(s), f_j\ra] = \int_{0}^{\infty} e^{-a|t-s|x} f_i(x)f_j(x) dx.
    \end{equation}
\end{defn}

\begin{rmk}
    One can verify that $\xi_{a} \in C([0,\infty):C^{-1/2-\epsilon}(0,\infty))$ for any $\epsilon>0$, by a Kolmogorov continuity argument. One can also naturally define $\la \xi_a(t), f \ra$ as a well-defined Gaussian process for any $f \in L^2(0,\infty)$.
\end{rmk}

\begin{defn}
    We define a time--dependent $\beta$-stochastic operator formally expressed as
    \begin{equation}
        \mathcal{H}_{\beta}(t) := -\frac{d^2}{dx^2}+x+\frac{2}{\sqrt{\beta}}\xi_1(t).
    \end{equation}
    More precisely, for $f \in H^{1,1}_{0}(0,\infty):= \{u \in L^2(0,\infty): u',xu \in L^2(0,\infty), u(0)=0\}$, define $\mathcal{H}_{\beta}(t)f$ as the distribution such that for any $g \in C_0^{\infty}(0,\infty)$,
    \begin{equation}
        \la \mathcal{H}_{\beta}(t) f,  g \ra := \int_{0}^{\infty} (f'(x) g'(x)+xf(x)g(x))dx+\frac{2}{\sqrt{\beta}}\int_{0}^{\infty} f(x)g(x) \xi_1(t)[dx],
    \end{equation}
    with $\int_{0}^{\infty} f(x)g(x) \xi_1(t)[dx] = \la \xi_1(t), f(x)g(x) \ra$.
\end{defn}
The following two lemmas will be used to define the eigenvalues of $\mathcal{H}_{\beta}(t)$. The proofs can be found in \cite{ramirez2011beta}.
\begin{lm}
    For any $f, g\in H_0^{1,1}(0,\infty)$, the quadratic form,
    \begin{equation}
        \la f, \mathcal{H}_{\beta}(t) g\ra,
    \end{equation}
    is well defined as $\lim_{n \to \infty } \la f_n, \mathcal{H}_{\beta}(t) g\ra$ for any $f_n \in C_c^{\infty}(0,\infty)$ with $f_n \to f.$
\end{lm}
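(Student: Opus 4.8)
The plan is to split the quadratic form into its deterministic and stochastic parts and verify continuity of each in the relevant norms; almost everything is then a Cauchy--Schwarz estimate. Fix $t$ and let $f_n \in C_c^{\infty}(0,\infty)$ with $f_n \to f$ in $H^{1,1}_0(0,\infty)$ (such sequences exist since $C_c^\infty(0,\infty)$ is dense in $H^{1,1}_0(0,\infty)$, using the boundary condition $u(0)=0$ together with a truncation-and-mollification argument). For each $n$ write
\[
\la f_n, \mathcal{H}_{\beta}(t) g\ra = D(f_n) + \tfrac{2}{\sqrt{\beta}} S(f_n), \qquad D(h):=\int_0^\infty\!\big(h'(x)g'(x)+x\,h(x)g(x)\big)dx,\quad S(h):=\la \xi_1(t), h g\ra .
\]
For the deterministic part, since $g,g'\in L^2(0,\infty)$,
\[
|D(f_n)-D(f)| \le \|f_n'-f'\|_{L^2}\,\|g'\|_{L^2} + \|x(f_n-f)\|_{L^2}\,\|g\|_{L^2} \longrightarrow 0,
\]
because $f_n\to f$ in $H^{1,1}_0$; hence $D(f_n)\to D(f)$ surely, a limit depending only on $f$.

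For the stochastic part, the one-dimensional Sobolev embedding $H^1(0,\infty)\hookrightarrow C_b([0,\infty))$ gives $\|f_n-f\|_{L^\infty}\le C\big(\|f_n-f\|_{L^2}+\|f_n'-f'\|_{L^2}\big)\to 0$, so that $\|f_n g - fg\|_{L^2}\le \|f_n-f\|_{L^\infty}\|g\|_{L^2}\to 0$; in particular $f_ng\to fg$ in $L^2(0,\infty)$ and $fg\in L^2(0,\infty)$ (product of an $L^\infty$ and an $L^2$ function). By the Remark following the definition of $\xi_a$, the map $h\mapsto \la\xi_1(t),h\ra$ extends to all $h\in L^2(0,\infty)$, and at the fixed time $t$ it is an isometry into $L^2(\Omega)$ since $\E\big[\la\xi_1(t),h\ra^2\big]=\int_0^\infty e^{-|t-t|x}h(x)^2\,dx=\|h\|_{L^2}^2$. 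Therefore $S(f_n)=\la\xi_1(t),f_ng\ra \to \la\xi_1(t),fg\ra$ in $L^2(\Omega)$.

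Combining, $\la f_n,\mathcal{H}_{\beta}(t) g\ra \to D(f)+\tfrac{2}{\sqrt{\beta}}\la\xi_1(t),fg\ra$ in $L^2(\Omega)$ (the deterministic summand converging surely), and after passing to a subsequence the convergence is almost sure. Independence of the approximating sequence follows by interleaving: if $f_n\to f$ and $\tilde f_n\to f$ in $H^{1,1}_0$, the interleaved sequence still converges to $f$, its image under the form converges in $L^2(\Omega)$, and this forces the two limits to agree; one then defines $\la f,\mathcal{H}_{\beta}(t) g\ra$ to be this common value. The main obstacle is precisely the stochastic term: one must recognize that the white-noise pairing is continuous in the $L^2(0,\infty)$ norm of its argument (the fixed-time isometry property) and that the $H^1\hookrightarrow L^\infty$ embedding upgrades $H^{1,1}_0$-convergence of $f_n$ to $L^2$-convergence of the products $f_ng$; one should also be careful to state the mode of convergence (deterministic for $D$, in $L^2(\Omega)$, hence a.s. along a subsequence, for $S$) and to note that $\la f,\mathcal{H}_{\beta}(t) g\ra$ is defined only up to a $\pr$-null set, consistently with how $\xi_1(t)$ itself is defined.
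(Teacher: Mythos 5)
Your route is genuinely different from the paper's: the paper does not prove this lemma itself but defers to \cite{ramirez2011beta}, where the quadratic form is constructed \emph{pathwise} --- at a fixed time the noise $\xi_1(t)$ is spatial white noise, so a.s.\ it has a continuous antiderivative $\Xi_t(x)$ (a Brownian motion in $x$), and one defines $\la \xi_1(t), fg\ra := -\int_0^\infty \Xi_t(x)\,(fg)'(x)\,dx$, checks agreement with the Gaussian pairing on smooth compactly supported arguments, and proves bounds for this functional in terms of the $H^{1,1}_0$ norms of $f$ and $g$. You instead use the fixed-time It\^o isometry $\E\big[\la\xi_1(t),h\ra^2\big]=\|h\|_{L^2}^2$ together with the one-dimensional embedding $H^1(0,\infty)\hookrightarrow L^\infty$ to get $f_n g\to fg$ in $L^2(0,\infty)$, hence convergence of the stochastic part in $L^2(\Omega)$; the deterministic part is handled by Cauchy--Schwarz exactly as in the standard approach. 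Within its scope this is correct and more elementary, and the interleaving argument does show the $L^2(\Omega)$ limit is independent of the approximating sequence.

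The gap is the mode in which the form ends up being defined. Your construction assigns a value to $\la f,\mathcal{H}_\beta(t) g\ra$ only as an $L^2(\Omega)$ (in-probability) limit, with almost-sure convergence only along subsequences, and --- more importantly --- with the exceptional null set depending on the pair $(f,g)$. The lemma is the foundation for the next one, which takes $\inf_{f\in H_0^{1,1},\,\|f\|_{L^2}=1}\la f,\mathcal{H}_\beta(t)f\ra$, an infimum over an uncountable family; for that one needs the form to be defined, almost surely, \emph{simultaneously} for all $f,g\in H^{1,1}_0$, together with pathwise bounds giving continuity (and, for the eigenvalue problem, lower semicontinuity/coercivity) of the form on a single event of full probability. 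An isometry-based definition per fixed pair cannot be upgraded to this by any union over null sets, and it also does not give almost-sure convergence of the full sequence $\la f_n,\mathcal{H}_\beta(t) g\ra$, which is the natural reading of the statement. To close the gap you would need the pathwise step: fix the realization, integrate by parts against the continuous antiderivative of the noise, and estimate the resulting integral by the $H^{1,1}_0$ norms --- which is precisely the argument of \cite{ramirez2011beta} that the paper invokes.
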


\begin{lm}
    For any fixed $t$, the infimum,
    \begin{equation}
        \lambda_1(t):=\inf_{f \in H_0^{1,1}, ||f||_{L^2}=1} \la f, \mathcal{H}_{\beta}(t) g\ra,
    \end{equation}
    is attained by some $f_{1,t}(x) \in H_0^{1,1}$. We call the $\lambda_0(t)$ the smallest eigenvalue of $\mathcal{H}_{\beta}(t)$.
\end{lm}

The smallest $k$ eigenvalues of $\mathcal{H}_{\beta}(t)$ can now be defined inductively through the usual variational formulation. 

\begin{figure}[hh]
    \centering
    \includegraphics[width=0.6\textwidth]{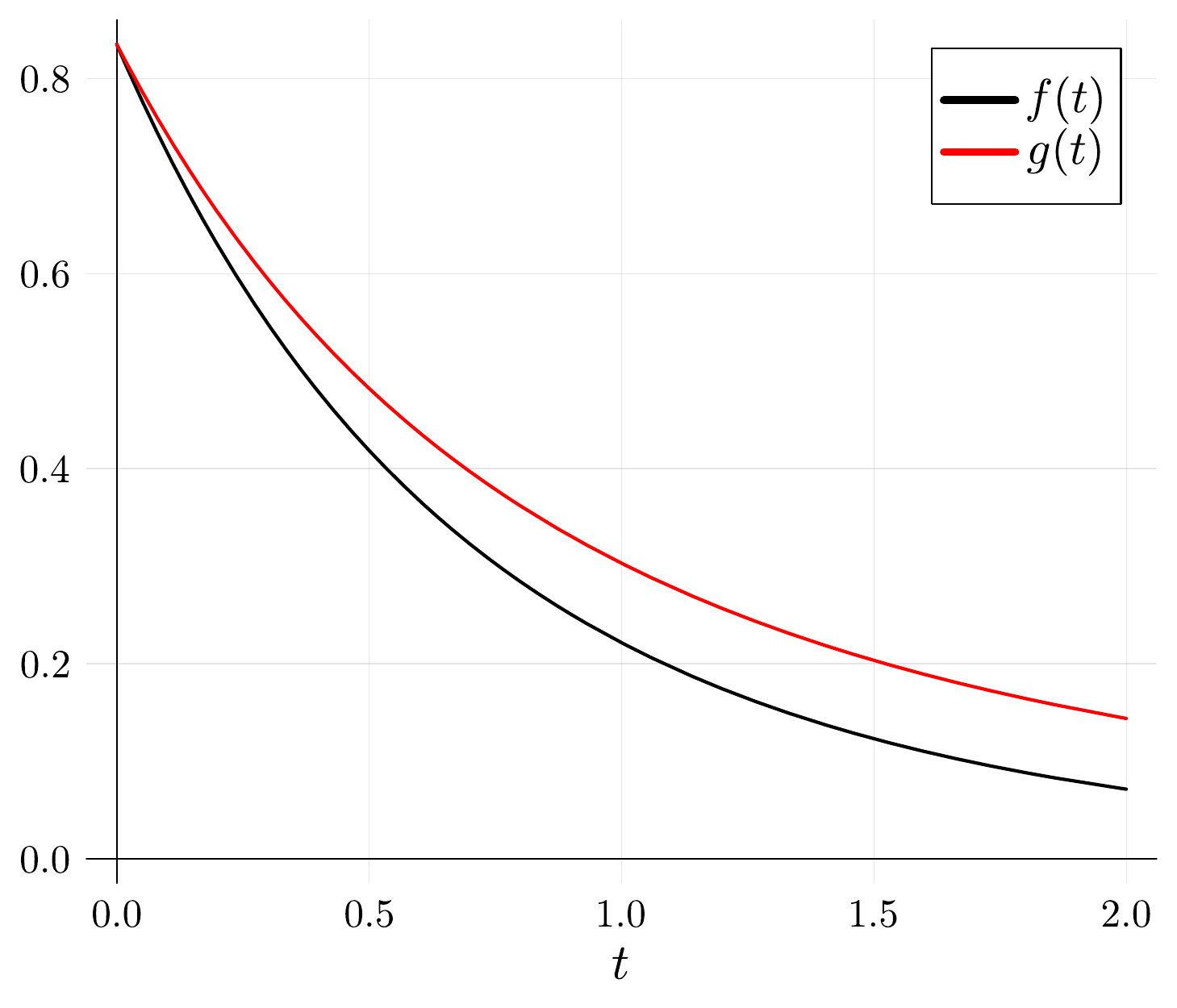}
    \caption{In this figure we  compare $f(t)=2\int_0^{\infty} \psi_1(x)^4 e^{-xt} dx$ with $g(t)=\int_0^{\infty}   \frac{e^{- xt}}{x} \psi_1(x)^2dx$ on the interval $t \in [0,2]$. $f(t)$ and $g(t)$ are the two candidate first order perturbations of $\mathrm{Cov}(\lambda_1(0),\lambda_1(t))$. As expected they agree at $t=0$, but clearly differ for $t>0$.}
\end{figure}\label{fig:Airy-Perturb-Graph}
\begin{rmk}[$\mathcal{H}_{\beta}(t)$ is not the correct dynamical stochastic Airy operator: a perturbative calculation]
    In this remark we discuss a perturbative calculation showing that the operator of definition \ref{def:stochastic-airy-op} should not be the conjectured limit of Problem 4 \cite{pierreprivatecomm}. 

    A first order perturbation of the operator of Definition \ref{def:stochastic-airy-op} about the deterministic Airy operator, $-\frac{d^2}{dx^2}+x$ suggests the expansion,
    \begin{equation*}
    \lambda_i(t) = -a_i + \frac{2}{\sqrt{\beta}} \langle \xi_1(t),\psi_i^2 \rangle + \dots,
    \end{equation*}
    where $\psi_i(x):=\frac{\mathrm{Ai}(x+a_i)}{\mathrm{Ai'}(a_i)}$ is the $i$th eigenfunction of the Airy operator (Recall $\mathrm{Ai}(x)$ is the Airy function and $a_i$ is the $i$th zero of the Airy function). So plugging in this perturbation appropriate for large $\beta$, we have
    \begin{equation}\label{eq:perturb-cov-1}
        \mathrm{Cov}(\lambda_i(t_1),\lambda_j(t_2)) \approx \frac{4}{\beta} \int_0^{\infty} \psi_i(x)^2\psi_j(x)^2 e^{-x|t_1-t_2|} dx.
    \end{equation}
    On the other hand, there already exist rigorously derived expressions for the first order in $\beta^{-1/2}$ perturbation of the two time covariance between eigenvalues in the Airy $\beta$--line ensemble (i.e. \cite[Theorem 1.2]{GorinPerturbation} and \cite[Equation (126)]{PierrePerturbations})  ,
    \begin{equation}\label{eq:perturb-cov-2}
         \mathrm{Cov}(\lambda_i(t_1),\lambda_j(t_2)) = \frac{2}{\beta}\int_0^{\infty}   \frac{e^{- x|t_1-t_2|}}{x} \psi_i(x) \psi_j(x)dx+o(\beta^{-1}).
    \end{equation}
    We now give an argument why \eqref{eq:perturb-cov-1} and \eqref{eq:perturb-cov-2} do not agree in general. Take $i=j=1$, $t_1=0$, and $t_2=t \gg 1$. Observe that $\psi_1(0)=\frac{\mathrm{Ai}(a_1)}{\mathrm{Ai}'(a_1)}=0$, and $\psi_1'(0)=\frac{\mathrm{Ai}'(a_1)}{\mathrm{Ai}'(a_1)}=1$, so $\psi_1(x) =x+O(x^2)$ around $x=0$. Moreover, in the $t \gg 1$ regime the integral in \eqref{eq:perturb-cov-1} is dominated by the behavior around $x=0$. Thus,
    \begin{equation*}
        \int_0^{\infty} \psi_1(x)^4 e^{-tx}dx \sim \int_0^{\infty} x^4 e^{-tx}dx \sim \frac{4!}{t^5},
    \end{equation*}
    while by similar reasoning,
    \begin{equation*}
        \int_0^{\infty} \frac{\psi_1(x)^2}{x} e^{-tx}dx \sim \int_0^{\infty} x e^{-tx}dx \sim \frac{1!}{t},
    \end{equation*}
    thus clearly the two expressions \eqref{eq:perturb-cov-1} and \eqref{eq:perturb-cov-2} are not equal. We also provide a graph obtained numerically to verify the discrepancy even at small times (see Figure \ref{fig:Airy-Perturb-Graph}).
\end{rmk}

\section{Proof of Theorem \ref{Entry Approximation Thm}}\label{Approximation Section}

The main ingredient for proving Theorem \ref{Entry Approximation Thm} is the following proposition, which is proved at the end of this section.  

\begin{prop}\label{Quantitative Approximation Probability }
For any fixed $k$, there exist constants $C(T,k), C'(T,k)$ such that
    \begin{equation*}
        \pr\bigg(\sup_{t \in [0,T]}|a_j^{(n)}(t)-\tilde{a}_j^{(n)}(t)|>C(T,k)\frac{\log(n)^j}{\sqrt{n}} \text{ for some } j\leq k\bigg) < C'(T,k)n^{-2},
    \end{equation*}
    and
    \begin{equation*}
        \pr\bigg(\sup_{t \in [0,T]} \frac{1}{\sqrt{n}}|b_j^{(n)}(t)^2-\tilde{b}_j^{(n)}(t)^2|>C(T,k)\frac{\log(n)^{2j}}{\sqrt{n}} \text{ for some } j \leq k\bigg) < C'(T,k)n^{-2}.
    \end{equation*}
\end{prop}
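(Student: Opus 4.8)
The plan is to establish both estimates first at a single fixed time $t$ with failure probability that is super-polynomially small, and then upgrade to the supremum over $[0,T]$ by a union bound over a polynomially fine mesh in time together with quantitative path-continuity (chaining) estimates. (Once the proposition is proved, the a.s.\ statement of Theorem~\ref{Entry Approximation Thm} follows by Borel--Cantelli, since $\sum_n n^{-2}<\infty$.)

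\emph{Fixed-time concentration.} Fix $t$. By the discussion following Definition~\ref{Def of approximate Tridiagonal Entries}, $\theta(t)$ is, on its nonzero coordinates, a centered Gaussian vector independent of the GOE matrix $\Tilde{M}(t)$, and the Householder recursion expresses the tuple $(a_1(t),\dots,a_{k+1}(t),b_1(t)^2,\dots,b_{k+1}(t)^2)$ as a fixed rational function of $a_1(t)$ together with the Gram data $\{\theta(t)^T(\Tilde{M}(t)/\sqrt n)^{j_1+j_2}\theta(t)\}_{j_1,j_2\le k}$ (this is the standard identification of Householder tridiagonalization with the Lanczos/Krylov process applied to $e_1$). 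Conditionally on $\Tilde{M}(t)$ each such quadratic form is Gaussian, so the Hanson--Wright inequality gives deviation $O(\sqrt n\,\log n)$ from its conditional mean, which is a fixed multiple of $\mathrm{Tr}((\Tilde{M}(t)/\sqrt n)^{j_1+j_2})$, with probability $1-e^{-c(\log n)^2}$, once one restricts to the event $\{\|\Tilde{M}(t)\|_{\mathrm{op}}\le 3\sqrt n\}$ on which the relevant Frobenius and operator norms are $O(\sqrt n)$ and $O(1)$. Concentration of linear eigenvalue statistics of GOE then replaces $\tfrac1n\mathrm{Tr}((\Tilde{M}(t)/\sqrt n)^{m})$ by the $m$-th moment of the semicircle law up to $O(\log n/\sqrt n)$. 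Hence with probability $\ge 1-e^{-c(\log n)^2}$ the Gram data lies within $O(\log n/\sqrt n)$ of the Hankel matrix of semicircle moments, which is invertible (being the Gram matrix of a linearly independent system) and well conditioned; the rational map above is Lipschitz on a neighborhood of this point, so $a_j(t)$ and $b_j(t)^2$ are within $O(\log n/\sqrt n)$ of their leading-order expressions in the concentrating Gram data. The product--sum identities for the $P_k$ recalled in Section~\ref{Chebyshev Polynomial Section}, in particular $P_k^2-P_{k-1}^2=P_{2k}$ and $(P_k-P_{k-2})P_{k-1}=P_{2k-1}$, identify these leading-order expressions with exactly $\Tilde{a}_j(t)$ and $\Tilde{b}_j(t)^2$ of Definition~\ref{Def of approximate Tridiagonal Entries}. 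This gives the fixed-$t$ versions of both bounds, with $C(T,k)\log n/\sqrt n$ in place of the supremum and failure probability $e^{-c(\log n)^2}$.

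\emph{Uniformity in $t$.} To pass to $\sup_{t\in[0,T]}$, take a mesh $t_i=iT/N$ with $N=n^{3}$ and union-bound the fixed-$t$ estimate over the $N+1$ mesh points; since $N\,e^{-c(\log n)^2}=o(n^{-2})$ this is harmless. It remains to control the oscillation of $a_j,b_j^2,\Tilde{a}_j,\Tilde{b}_j^2$ between consecutive mesh points. All four are obtained from the GOE Ornstein--Uhlenbeck process $M(t)$ and the vector process $\theta(t)$ by polynomial and rational operations; on the global event $\{\sup_{t\in[0,T]}\|M(t)\|_{\mathrm{op}}\le 3\sqrt n\}$ together with uniform lower bounds on the denominators $\|\vartheta_{j-1}(t)\|^2$ and $\|\vartheta_j(t)-\|\vartheta_j(t)\|e_{j+2}\|^2$ appearing in the recursion (both of probability $\ge 1-e^{-cn}$, again by a net-in-$t$ argument), these maps have Lipschitz constants bounded by a fixed power $n^{q}$ of $n$, while the increments of $M$ and $\theta$ over a time step of length $\delta=T/N$ have sub-Gaussian tails at scale $\sqrt{\delta\log n}$. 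Thus $\sup_{|t-s|\le\delta}|a_j(t)-a_j(s)|\le C n^{q}\sqrt{\delta\log n}\le \tfrac14 C(T,k)\log n/\sqrt n$, and likewise for the other three processes, with probability $1-e^{-c(\log n)^2}$ once $N=n^3$ is large enough relative to $q$. Combining: for any $t\in[0,T]$, choosing the nearest mesh point $t_i$ and applying the triangle inequality through $t_i$, we get $\sup_{t\in[0,T]}|a_j(t)-\Tilde{a}_j(t)|\le C(T,k)\log n/\sqrt n$, and the analogous bound for $\tfrac1{\sqrt n}|b_j(t)^2-\Tilde{b}_j(t)^2|$, on the intersection of the events above, which has probability $\ge 1-C'(T,k)n^{-2}$. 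The detailed concentration and chaining estimates underlying this paragraph are the content of the Appendix.

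\emph{Main obstacle.} The principal difficulty is the bookkeeping of powers of $n$. The Householder recursion involves divisions by $\|\vartheta_{j-1}(t)\|^2$ and by $\|\vartheta_j(t)-\|\vartheta_j(t)\|e_{j+2}\|^2$, so one must produce quantitative lower bounds on these denominators that are uniform in $t\in[0,T]$, and one must track the Lipschitz constants of the intermediate rational maps carefully enough that the $O(n^{-1/2})$ concentration errors are not amplified. The uniform-in-$t$ upgrade is delicate for the same reason: the modulus-of-continuity bounds for the intermediate processes carry polynomial-in-$n$ prefactors, which forces the mesh to be polynomially fine and explains why the fixed-$t$ estimate is needed with super-polynomially small failure probability rather than merely $O(n^{-2})$.
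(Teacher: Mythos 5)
Your time-uniformity strategy (a polynomially fine mesh plus modulus-of-continuity control, which is why you need super-polynomially small fixed-time failure probability) is sound in outline and close in spirit to what the paper does once and for all in the appendix, via a chaining bound for general Lipschitz functionals of $(\Tilde{M}(t)/\sqrt{n},\theta_0(t),\dots,\theta_k(t))$. The genuine gap is in your fixed-time step. Concentration of the Gram data at scale $\log n/\sqrt{n}$ around the Hankel matrix of semicircle moments, combined with Lipschitz continuity of the rational moments-to-Jacobi-coefficients map, yields only that $a_j(t)$ is within $O(\log n/\sqrt{n})$ of the value of that map at the semicircle moments, i.e.\ of the \emph{deterministic} Jacobi coefficients of the semicircle law. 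That conclusion is false: $a_j(t)$ has order-one fluctuations (it converges to $\sqrt{2}\,\mathrm{OU}(2j-1)$), which is the very content of Theorem~\ref{Main Theorem Statement}. What the proposition requires is a comparison of $a_j(t)$ with the \emph{random} quantity $\Tilde{a}_j(t)$ to accuracy $\log n/\sqrt{n}$, and that needs (i) a first-order Taylor expansion of the Householder/Lanczos recursion around the semicircle data with a quadratic remainder of size $O(\log^2 n/n)$, and (ii) the exact identification of the resulting linear term with $\sqrt{n}\,\theta(t)^T P_{2j-1}(\Tilde{M}(t)/\sqrt{n})\theta(t)$ and $\sqrt{n}\,\theta(t)^T P_{2j}(\Tilde{M}(t)/\sqrt{n})\theta(t)$. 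Your sentence asserting that the product--sum identities ``identify these leading-order expressions with exactly $\Tilde{a}_j$ and $\Tilde{b}_j^2$'' is precisely item (ii) stated without proof; it is the mathematical core of the proposition, and it is what Section~\ref{Approximation Section} spends most of its length establishing: an induction on the reflectors showing $Q_0(t)\cdots Q_{k-1}(t)$ is a structured perturbation of the Gram--Schmidt projections, hence $\vartheta_j(t)\approx\theta_j(t)/\|\theta_{j-1}(t)\|$ with errors of size $\log n/\sqrt{n}$ lying in $\Theta_{j-1}(t)+\mathcal{E}_j(t)$, then $a_{j+1}(t)\approx\sqrt{n}\,(\theta_j(t)-\theta_{j-2}(t))^T\theta_{j-1}(t)$ and $b_{j+1}(t)^2\approx\|\theta_j(t)\|^2/\|\theta_{j-1}(t)\|^2$, and only then the Chebyshev identities. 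Lipschitz continuity alone cannot produce any of this.

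A second, related omission is that norm-only bookkeeping of the errors does not suffice. In expressions such as $a_{j+1}(t)=u(t)^T M(t)\,u(t)$ the error vectors are contracted against $M(t)$, whose operator norm is of order $\sqrt{n}$, so an error of Euclidean size $\log n/\sqrt{n}$ would a priori contribute $O(\log n)$, which is fatal at the target accuracy. The paper avoids this by recording, at every stage of the induction, the subspace $\Theta_j(t)+\mathcal{E}_j(t)$ in which each error lives, so that the dangerous contractions reduce to inner products covered by Lemma~\ref{Concentration for specific functionals of theta}; any rewrite along your lines (for instance, differentiating the moments-to-Jacobi map at the semicircle point and matching the differential with the $P_{2j-1},P_{2j}$ combinations of the moment fluctuations) must carry out this cancellation explicitly. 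The remaining issues are minor: a Gaussian quadratic form is not conditionally Gaussian (though Hanson--Wright applies regardless), and the uniform lower bounds on the denominators $\|\vartheta_j(t)-\|\vartheta_j(t)\|e_{j+2}\|^2\approx 2$ do follow from the same concentration event, so that part of your plan is fine.
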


\begin{proof}[Proof of Theorem \ref{Entry Approximation Thm}, given Proposition \ref{Quantitative Approximation Probability }] 
We have 
\begin{equation*}
    \sum_{n=1}^{\infty}\pr\bigg(\sup_{t \in [0,T]}|a_j^{(n)}(t)-\tilde{a}_j^{(n)}(t)|>C(T,k)\frac{\log(n)^j}{\sqrt{n}} \text{ for some } j\leq k \bigg) \leq \sum_{n=1}^{\infty} C'(T,k) n^{-2} <\infty.
\end{equation*}
Thus, by the Borel-Cantelli lemma we have
\begin{equation*}
    \pr\bigg(\sup_{t \in [0,T]}|a_j^{(n)}(t)-\Tilde{a}_j^{(n)}(t)|>C(T,k)\frac{\log(n)^j}{\sqrt{n}} \text{ for some } j\leq k, \text{ for infinitely many }n \bigg) = 0,
\end{equation*}
and in particular almost surely there exists an $n_0(T,k)$ such that $\sup_{t \in [0,T]} |a_j^{(n)}(t)-\tilde{a}_j^{(n)}(t)|\leq C(T,k)\frac{\log(n)^j}{\sqrt{n}}$ for all $n \geq n_0(T,k)$. An identical argument proves there exists an $n_0'(T,k)$ such that $\sup_{t \in [0,T]}|b_j^{(n)}(t)^2-\tilde{b}_j^{(n)}(t)^2|\leq C(T,k)\frac{\log(n)^{2j}}{\sqrt{n}}$ for all $n \geq n_0(T,k)$. Redefining $n_0(T, k)$ as the maximum of the two completes the proof.
\end{proof}

\subsection{Concentration Bounds}

The primary ingredient of this section is the following concentration bound whose proof is deferred further to Appendix \ref{Concentration Proof}.
\begin{lm}\label{Concentration Bound}
    Let $F:\R^{n^2+(k+1)n} \to \R$ be a function with bounded Lipschitz norm on compact sets (independently of $n$ or $k$) and $|F(x)| \leq C(1+||x||)^d$ for some constants $C,d>0$ (again independent of $n$ or $k$). Then
    \begin{equation*}
    \begin{split}
        &\pr\left(\sup_{t \in [0,T]} |F(\Tilde{M}(t)/\sqrt{n},\theta_0(t),\dots,\theta_k(t))-\E F(\Tilde{M}(t)/\sqrt{n},\theta_0(t),\dots,\theta_k(t))| > u\right) \\
        &\hspace{7cm}\leq C e^{-c(T,k) \frac{n u^2}{\log(n)}}+e^{-c(T,k)\frac{n}{\log(n)}},
        \end{split}
    \end{equation*}
    where the constants $C$ and $c(T,k)$ do not depend on $n$. In particular, setting $u=\sqrt{w}\log(n)n^{-1/2}$, we have
    \begin{equation*}
        \pr\left(\sup_{t \in [0,T]} |F(\Tilde{M}(t)/\sqrt{n},\theta_0(t),\dots,\theta_k(t))-\E F(\Tilde{M}(t)/\sqrt{n},\theta_0(t),\dots,\theta_k(t))| > \frac{\sqrt{w}\log(n)}{n^{1/2}}\right) \leq C n^{-c(T,k)w},
    \end{equation*}
    which is summable for $w>c(T,k)^{-1}$. 
\end{lm}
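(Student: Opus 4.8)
The plan is to read the claim as a Gaussian concentration estimate and establish it in three moves: (i) a pointwise–in–$t$ Borell--TIS bound, carried out after a truncation that turns the (only locally Lipschitz) map into a globally Lipschitz one with a constant of order $n^{-1/2}$; (ii) a bound on the probability that the truncation fails, which furnishes the $u$-independent term $e^{-c(T,k)n/\log n}$; and (iii) a discretization of $[0,T]$ together with a quantitative modulus-of-continuity estimate that promotes (i) to the uniform statement, at the cost of the $\log n$ inside the exponent. Throughout, the key structural observation is that $\theta_j(t)=P_j(\Tilde{M}(t)/\sqrt n)\theta(t)$ is a bounded-degree polynomial in the Gaussian pair $(\Tilde{M}(t),\theta(t))$, so that $F(\Tilde{M}(t)/\sqrt n,\theta_0(t),\dots,\theta_k(t))$ is a deterministic function $\Phi_n$ of the standard-Gaussian coordinates underlying that pair.

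For (i), I would work on the event $\mathcal{G}:=\{\|\Tilde{M}(t)/\sqrt n\|_{\mathrm{op}}\le 3 \text{ and } \|\theta(t)\|\le 2 \text{ for all } t\in[0,T]\}$. On $\mathcal{G}$ one has $\|P_j(\Tilde{M}(t)/\sqrt n)\|_{\mathrm{op}}=O_k(1)$ and hence $\|\theta_j(t)\|=O_k(1)$, so all arguments of $F$ stay in a region on which the hypotheses give an $n$- and $k$-independent Lipschitz bound for $F$; tracking the $1/\sqrt n$ normalization of $\Tilde{M}$ and the resulting $O(n^{-1/2})$ Lipschitz dependence of $\Tilde{M}/\sqrt n$ and of the $\theta_j$ on the raw Gaussian coordinates, the restriction of $\Phi_n$ to a neighborhood of $\mathcal{G}$ is Lipschitz in those coordinates with constant $O_{k,T}(n^{-1/2})$. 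Extending this restriction to the whole space by a McShane extension $\bar\Phi_n$ with the same constant, the Gaussian-isoperimetric inequality yields, for each fixed $t$, $\pr(|\bar\Phi_n-\E\bar\Phi_n|>u)\le 2e^{-c(T,k)nu^2}$.

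For (ii), $\pr(\mathcal{G}^c)$ is controlled by applying Gaussian-Lipschitz concentration to $\lambda_{\max}(\Tilde{M}(t)/\sqrt n)$ (an $O(n^{-1/2})$-Lipschitz function of the entries, with mean $2+o(1)$) and to $\|\theta(t)\|$, at each point of a grid of $\mathrm{poly}(n)$ times, then covering the gaps with a standard chaining estimate $\E\sup_{|t-s|\le\delta}\|\Tilde{M}(t)/\sqrt n-\Tilde{M}(s)/\sqrt n\|_{\mathrm{op}}\lesssim\sqrt{\delta\log(1/\delta)}$; this gives $\pr(\mathcal{G}^c)\le e^{-c(T,k)n/\log n}$, which is also the event on which $\bar\Phi_n$ may disagree with $\Phi_n$. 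The polynomial-growth hypothesis on $F$, combined with Gaussian moment bounds, ensures that replacing $\Phi_n$ by $\bar\Phi_n$ inside the expectation perturbs it by at most $\mathrm{poly}(n)\cdot\pr(\mathcal{G}^c)^{1/2}$, which is negligible against the target bound.

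For (iii), discretize $[0,T]$ into $N\asymp n$ equally spaced points: a union bound over the pointwise estimate of (i) costs a factor $N$, and the gaps must be handled by a direct increment bound for the process $t\mapsto\bar\Phi_n(\cdots)$ — crucially \emph{not} via the crude (global Lipschitz constant)$\times$(Gaussian increment) product, but by exploiting that this process is a bounded-degree polynomial functional of a Gaussian process, whose increments over a window of length $\delta$ have variance proxy $O_{k,T}(\delta/n)$, so that $\sup_{|t-s|\le\delta}|\bar\Phi_n(\cdots)(t)-\bar\Phi_n(\cdots)(s)|$ is itself $O(\sqrt{\delta\log(1/\delta)/n})$ with overwhelming probability. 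Taking $\delta=T/N\asymp 1/n$ makes the discretization error $\lesssim n^{-1}\sqrt{\log n}$, and balancing it and the union-bound factor against $e^{-cnu^2}$ is exactly what yields $e^{-cnu^2/\log n}$ on the full interval and the threshold $w>c(T,k)^{-1}$ in the ``in particular''; the same increment bound lets one replace $\E\bar\Phi_n(\cdots)(t_i)$ by $\E\bar\Phi_n(\cdots)(t)$ uniformly in $t$. I expect (iii) to be the main obstacle: obtaining a sharp enough continuity estimate for $t\mapsto F(\Tilde{M}(t)/\sqrt n,\theta_0(t),\dots,\theta_k(t))$ forces one through the polynomial structure of the $\theta_j$ rather than treating $F$ as a black box, and the bookkeeping that certifies the $O(n^{-1/2})$ Lipschitz scaling of $\bar\Phi_n$ on the truncated set is the other delicate point. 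A conceptually cleaner alternative, which avoids discretization, is to realize $\sup_{t\in[0,T]}F(\cdots)$ as a Lipschitz functional (after the same truncation) of the driving matrix/vector Brownian motions on $[0,T]$ and apply Gaussian isoperimetry on Wiener space directly, but this only shifts the work onto verifying the path-space Lipschitz bound.
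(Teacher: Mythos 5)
Your overall architecture is essentially the paper's: the paper defines $G(A,x):=F(A,P_0(A)x,\dots,P_k(A)x)$ and a radially truncated version $\Tilde{G}$ (capping $\|A\|$ and $\|x\\|$ at $100$), which plays exactly the role of your McShane extension $\bar\Phi_n$; it gets the pointwise $e^{-cnu^2}$ bound from Gaussian concentration (the $n^{-1/2}$ Lipschitz scaling coming from the normalization of the arguments), bounds the probability that the truncation is active by operator-norm/vector-norm suprema at cost $e^{-c(T,k)n/\log n}$, compares $\E G$ with $\E\Tilde{G}$ by Cauchy--Schwarz using the polynomial growth of $F$ (stationarity makes your ``replace $\E$ at $t_i$ by $\E$ at $t$'' step automatic), and upgrades to a supremum over $[0,T]$ by splitting into $\sim n$ windows of length $1/n$, chaining within each window, and using that the maximum of $Cn$ sub-Gaussian variables is $\sqrt{\log n}$-sub-Gaussian --- which is precisely where the $\log n$ in the exponent enters.

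The one place you deviate is step (iii), and as written it contains a gap. You claim the within-window increments of $t\mapsto F(\Tilde{M}(t)/\sqrt n,\theta_0(t),\dots,\theta_k(t))$ have variance proxy $O(\delta/n)$ ``because the process is a bounded-degree polynomial functional of a Gaussian process.'' It is not: the $\theta_j$ are polynomial in $(\Tilde{M},\theta)$, but $F$ is only assumed locally Lipschitz with polynomial growth, so that justification does not apply, and you give no argument for the $O(\delta/n)$ scale for a general such $F$. Fortunately the sharper scale is unnecessary, and the fix is exactly the black-box route you dismiss: the product bound (Lipschitz constant of $\Tilde G$ in its \emph{normalized} arguments) $\times$ (argument increments) is only crude if the matrix increment is measured in Frobenius norm; measuring it in operator norm, the estimates $\pr(\|X(t)-X(s)\|>u)\le Ce^{-u^2/4}$ and $\pr\bigl(\|M(t)-M(s)\|_{op}>\sqrt{(2+\epsilon)n|t-s|}+u\bigr)\le Ce^{-cu^2/(4|t-s|)}$ on windows of length $1/n$ show the increments of $\Tilde G(M(t)/\sqrt n,\theta(t))$ are sub-Gaussian at scale $n^{-1/2}$ --- the same scale as the single-time concentration --- after which chaining within each window and the union bound over $\sim n$ windows give $e^{-cnu^2/\log n}$. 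That is the paper's argument; with this replacement for your increment step, your proposal coincides with it.
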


The following lemma immediately follows from the previous lemma together with a union bound.
\begin{lm}\label{Concentration for specific functionals of theta}
    There is a $C=C(T,k)>0$ such that on a set $A_k$ with $\pr(A_k)>1-C'(T,k)n^{-2}$, the following holds. 
    \begin{enumerate}
        \item For any deterministic unit vector $v \in V_{k}:=\mathrm{Span}(\{e_j\}_{j=1}^k)$ and $x(t) \in \{\theta_j(t)\}_{j\leq k} \cup\{M(t)^{j_1}e_{j_2+2}\}_{0 \leq j_1 \leq k, 0 \leq j_2 \leq k-1}$, we have
    \begin{equation}
        \sup_{t \in [0,T]} |\la x(t),v \ra| \leq \frac{C\log(n)}{\sqrt{n}}.
        \label{Single Component bound}
    \end{equation}
    \item For $j, \ell =0,1,\dots,k$,
    \begin{equation}
        \sup_{t \in [0,T]} |\theta_j(t)^T\theta_{\ell}(t)-\delta_{j,\ell}| \leq \frac{C\log(n)}{\sqrt{n}}.
        \label{inner product concentration}
    \end{equation}
    \item 
    \begin{equation}
        \sup_{t \in [0,T]} \max(\|M(t)\|_{op},\|\tilde{M}(t)\|_{op}) \leq C\log(n).
    \end{equation}
    \end{enumerate}
\end{lm}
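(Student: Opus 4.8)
\textbf{Proof plan for Lemma~\ref{Concentration for specific functionals of theta}.}
The plan is to obtain both statements as direct consequences of the master concentration bound in Lemma~\ref{Concentration Bound} by finding, for each desired quantity, an appropriate Lipschitz functional $F$ of the data $(\Tilde M(t)/\sqrt n, \theta_0(t),\dots,\theta_k(t))$ whose conditional expectation is computable and behaves as claimed, and then taking a union bound over finitely many choices of $F$ to produce the single good event $A_k$. Throughout I would use that $\theta(t)$ restricted to its last $n$ coordinates is $\mathbf{OU}_n(1,\sqrt 2)$, independent of the GOE process $\Tilde M(t)$, and that $\theta_j(t)=P_j(\Tilde M(t)/\sqrt n)\theta(t)$, so each $\theta_j(t)$ is a fixed polynomial (of degree $j\le k$) in $\Tilde M(t)/\sqrt n$ applied to a Gaussian vector. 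Such a map is Lipschitz on compact sets with polynomial growth, uniformly in $n$ once the polynomials $P_j$ are fixed, so Lemma~\ref{Concentration Bound} applies with the stated $u=\sqrt w\,\log(n)\,n^{-1/2}$ and gives probability bound $Cn^{-c(T,k)w}$, which for $w$ a large enough constant is $\le C'(T,k)n^{-2}$.

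For part (2), fix $j,\ell\le k$ and set $F(\Tilde M/\sqrt n,\theta_0,\dots,\theta_k):=\theta_j^T\theta_\ell = \theta^T P_j(\Tilde M/\sqrt n)P_\ell(\Tilde M/\sqrt n)\theta$. Conditioning on $\Tilde M(t)$ and using $\E[\theta\theta^T \mid \Tilde M]= (I-e_1e_1^T)$ together with $P_j(\Tilde M/\sqrt n)(I-e_1e_1^T)=P_j(\Tilde M/\sqrt n)$ for $j\ge 1$ (since $\Tilde M e_1=0$; the $j=0$ case is handled separately as $\theta_0$ already has zero first coordinate), one gets
\begin{equation*}
\E[F\mid \Tilde M(t)] = \mathrm{Tr}\!\left(P_j(\Tilde M(t)/\sqrt n)P_\ell(\Tilde M(t)/\sqrt n)\right),
\end{equation*}
and then $\E[F]$ is this trace's expectation, which by the semicircle law and the orthonormality of $\{P_j\}$ on $L^2([-2,2],\tfrac1{2\pi}\sqrt{4-x^2}dx)$ equals $\delta_{j,\ell}+O(n^{-1})$ (this is the single-time case of the computation \eqref{Orthogonality Computation}). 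Feeding this into Lemma~\ref{Concentration Bound} with a large constant $w$, and union bounding over the $O(k^2)$ pairs $(j,\ell)$, yields \eqref{inner product concentration} on an event of probability $\ge 1-C'(T,k)n^{-2}$. For part (1), note that any $x(t)\in\mathcal{E}_{2k}(t)\oplus\Theta_{2k}(t)$ is a linear combination of the spanning vectors $M(t)^{j_1}e_{j_2+2}$ and $\theta_j(t)$, so it suffices to bound $\la M(t)^{j_1}e_{j_2+2},v\ra$ and $\la\theta_j(t),v\ra$ for $v\in\{e_1,\dots,e_k\}$; but wait — the point is subtler, since $x(t)$ may depend on $n$ with large coefficients, so instead I would bound $\sup_t \|\Pi_{V_k} M(t)^{j_1}e_{j_2+2}\|$ and $\sup_t\|\Pi_{V_k}\theta_j(t)\|$ directly as the relevant $F$'s. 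Here $\Pi_{V_k}$ is the projection onto the first $k$ coordinates; for $\theta_j$ these first coordinates come only from low-order boundary effects of $P_j(\Tilde M/\sqrt n)$ hitting the Gaussian tail, and their conditional expectation is $O(n^{-1/2})$ in norm (indeed the expected squared norm is $O(1/n)$ by a trace computation restricted to $k$ coordinates), so Lemma~\ref{Concentration Bound} gives the $\log(n)/\sqrt n$ bound.

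The main obstacle I anticipate is \emph{not} the concentration step itself — that is essentially mechanical once $F$ is identified — but rather the bookkeeping needed to make statement (1) uniform over \emph{all} $x(t)$ in the (random, time-dependent) subspace $\mathcal{E}_{2k}(t)\oplus\Theta_{2k}(t)$ with a single constant $C(T,k)$: one must express the bound in a basis-independent way, control the norms of the (finitely many, $n$-independent in number) spanning vectors' projections onto $V_k$, and argue that this controls the projection of an arbitrary unit vector in the span. A clean route is to bound the operator norm of $\Pi_{V_k}$ restricted to this subspace, i.e.\ to show $\sup_t \|\Pi_{V_k}|_{\mathcal{E}_{2k}(t)\oplus\Theta_{2k}(t)}\|_{op}\le C\log(n)/\sqrt n$, which reduces to: (i) showing the $O_k(1)$ spanning vectors have $V_k$-projections of size $O(\log(n)/\sqrt n)$ via Lemma~\ref{Concentration Bound}, and (ii) showing that a Gram-matrix lower bound keeps the spanning set well-conditioned away from $V_k$, which follows from \eqref{inner product concentration} (for the $\theta$'s) and a parallel concentration statement for the $\mathcal{E}$-vectors. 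Once this is in place, both conclusions follow and the good set $A_k$ is the finite intersection of all the events used.
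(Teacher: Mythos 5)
Your overall strategy---realize each quantity as a Lipschitz, polynomially bounded functional $F$ of $(\Tilde{M}(t)/\sqrt{n},\theta_0(t),\dots,\theta_k(t))$, apply Lemma~\ref{Concentration Bound} with $u=\sqrt{w}\log(n)n^{-1/2}$, and take a union bound over the finitely many functionals---is exactly the paper's proof (which is stated as an immediate consequence of Lemma~\ref{Concentration Bound} plus a union bound). Your part (2) is correct up to a harmless normalization slip: the entries of $\theta(t)$ have variance $1/n$, so $\E[\theta\theta^T\mid\Tilde{M}]=\tfrac{1}{n}(I-e_1e_1^T)$ and the conditional mean is $\tfrac{1}{n}\mathrm{Tr}(P_jP_\ell)-\tfrac{1}{n}P_j(0)P_\ell(0)=\delta_{j,\ell}+O(1/n)$; also $P_j(\Tilde{M}/\sqrt{n})e_1=P_j(0)e_1$ is not zero for even $j$, so the projector does not simply drop out, though this only perturbs the $O(1/n)$ term.

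The genuine gap is in your proposed uniformization of part (1). The bound $\sup_{t}\|\Pi_{V_k}|_{\mathcal{E}_{2k}(t)\oplus\Theta_{2k}(t)}\|_{op}\leq C\log(n)/\sqrt{n}$ is false: taking $j_1=0$ in the definition of $\mathcal{E}_{2k}(t)$ shows that this subspace contains the coordinate vectors $e_2,\dots,e_{2k+1}$, so for $k\geq 2$ it contains unit vectors (e.g.\ $e_2$) lying inside $V_k$ and the restricted operator norm equals $1$. Even excluding the $j_1=0$ generators, the normalized vectors $M(t)^{j_1}e_j/\|M(t)^{j_1}e_j\|$ with $j_1$ even have order-one overlap with $e_j$ for $2\leq j\leq k$ (for instance $(M^2)_{jj}\approx n$ while $\|M^2e_j\|\approx\sqrt{2}\,n$), so no Gram-matrix conditioning argument can produce a bound uniform over unit vectors of the whole span; the lemma simply cannot hold in that literal generality, and the paper's one-line proof glosses over this. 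What is actually needed downstream (in the induction for \eqref{Householder Product} and in Corollary~\ref{vartheta expansion and analysis}) is \eqref{Single Component bound} for the finitely many specific vectors $x(t)=\theta_j(t)$ (and then $\vartheta_j(t)$ via the expansion), plus the trivial Cauchy--Schwarz bound for error vectors whose norm is already $O(\log(n)/\sqrt{n})$. For that finite family your argument is sound: $\la\theta_j(t),e_i\ra=e_i^TP_j(\Tilde{M}(t)/\sqrt{n})\theta(t)$ has conditional mean zero and conditional variance $O(1/n)$ on the event $\|\Tilde{M}(t)\|_{op}\leq C\sqrt{n}$, so Lemma~\ref{Concentration Bound} and a union bound over $i\leq k$, $j\leq 2k$ give the claim. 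You should therefore restrict part (1) to those concrete functionals (equivalently, to $x(t)$ admitting an expansion in $\{\theta_j(t)\}$ and small-norm errors with $O_k(1)$ coefficients) rather than attempt the operator-norm strengthening.
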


\subsection{$\vartheta_k(t) \approx \frac{\theta_k(t)}{\|\theta_{k-1}(t)\|}$: From Householder Reflectors to Gram-Schmidt}

Recall from Section~\ref{Householder and Tridiag Section}, with $M_0(t)=M(t)$, 
\begin{equation}
    \begin{split}
        a_{k+1}(t)&=e_{k+1}^T M_k(t)e_{k+1},\\
        \vartheta_{k}(t)&=\frac{1}{\sqrt{n}}M_k(t)e_{k+1}-\|\vartheta_{k-1}(t)\|e_k-\frac{1}{\sqrt{n}}a_{k+1}(t)e_{k+1},\\
        b_{k+1}(t) &= \|\vartheta_k(t)\|,\\
         Q_k(t)&=I-\frac{2(\vartheta_k(t)-\|\vartheta_k(t)\|e_{k+2})(\vartheta_k(t)-\|\vartheta_k(t)\|e_{k+2})^T}{\|\vartheta_k(t)-\|\vartheta_k(t)\|e_{k+2}\|^2},\\
        M_{k+1}(t) &= Q_k(t)M_k(t)Q_k(t),
    \end{split}
\end{equation}
and that all vectors in the equation display above lie in $\R^{n+1}$ and all matrices in $\R^{(n+1)^2}$. Moreover, recall that the first $k+1$ entries of $\vartheta_k(t)$ are identically $0$.
Finally recall the two vector spaces 
\begin{equation*}  
    \begin{split} ~&\Theta_k(t):=\mathrm{Span}(\{\theta_j(t)\}_{j\leq k}),\\
    &\mathcal{E}_k(t):= \mathrm{Span}(\{M(t)^{j_1}e_{j_2+2}\}_{0 \leq j_1 \leq k, 0 \leq j_2 \leq k-1}).
    \end{split}
\end{equation*}


To state the next result recall the definitions $\Tilde{M}(t)=(I-e_1e_1^T) M(t) (I-e_1e_1^T)$ and $\theta_k(t)=P_k(\Tilde{M}(t)/\sqrt{n})\vartheta_0(t)$ in Section~\ref{Approximate thetas}. The main result of this section is the following.
\begin{prop}
   On the event $A_k$ defined in Lemma \ref{Concentration for specific functionals of theta}, for $n$ sufficiently large, we have,
\begin{equation}\label{vartheta = theta + epsilon}
        \vartheta_k(t) = (c_k(t)+\mu_k(t))\theta_k(t)+\epsilon_k(t),
    \end{equation}
    with $c_k(t):=\frac{1}{\|\theta_{k-1}(t)\|}$, $\epsilon_k(t) \in \Theta_{k-1}(t)+ \mathcal{E}_k(t)$, $\sup_{t \in [0,T]}\|\epsilon_k(t)\| \leq C_{k,T} \frac{\log(n)}{\sqrt{n}}$, and $\sup_{t \in [0,T]} |\mu_k(t)| \leq C_{k,T}\frac{\log(n)^{2(k-1)}}{n}$.
\end{prop}

\begin{proof}
The proof is by the strong induction on $k$. The base case $k=0$ is trivial. For the inductive step, we start by proving that on the event $A_k$ defined in Lemma \ref{Concentration for specific functionals of theta}, for $n$ sufficiently large, we have,
\begin{equation}\label{Householder Product}
        \begin{split}
            &Q_0(t)Q_1(t)\cdots Q_{k-1}(t)\\
            &=I +\sum_{i < k, 2\leq j \leq k} \left(c_{i,j}(t) \vartheta_i(t)e_j^T+d_{i,j}(t) e_j\vartheta_i(t)^T\right)+\sum_{j,j' < k} f_{j,j'}(t) \vartheta_j(t)\vartheta_{j'}(t)^T,
        \end{split}
    \end{equation}
for some processes $c_{i,j}(t), d_{i,j}(t), f_{j,j'(t)}$, with $\max_{i,j,j'} \sup_{t \in [0,T]} (|c_{i,j}(t)|+ |d_{i,j}(t)|+|f_{j,j'(t)}|) \leq C_{k,T}$ for some constant $C_{k,T}>0$ depending only on $k$ and $T$.

Observe that by the inductive hypothesis and Lemma \ref{Concentration for specific functionals of theta}, it is clear that $|\vartheta_j(t)^T \vartheta_{j'}(t)-\delta_{j,j'}| \leq C_k \frac{\log(n)}{\sqrt{n}}$ and $|\vartheta_j(t)^T e_{j'}|\leq C_k \frac{\log(n)}{\sqrt{n}}$ for $j, j' \leq k$. As a result, we obtain
\begin{equation*}
   \left| \frac{2}{\|\vartheta_j(t)-\|\vartheta_j(t)\|e_{k+2}\|^2}\right| \leq 2+C_k\frac{\log(n)}{\sqrt{n}}.
\end{equation*}
Expanding the product $Q_0(t)Q_1(t)\cdots Q_{k}(t)$, an expansion of the form in the right hand side of \eqref{Householder Product} is clear. Moreover, the $c_{i,j}(t),d_{i,j}(t), f_{j,j'}(t)$ are all polynomials of $\frac{2}{||\vartheta_j(t)-||\vartheta_j(t)||e_{k+2}||^2}$, $e_a^T \vartheta_b(t)$ $\vartheta_a(t)^T \vartheta_b(t)$, and $\delta_a(t)$, for $a,b \leq k$, with number of terms and coefficients only depending on $k$. This completes the proof of \eqref{Householder Product}. The proof of the inductive step will involve repeated applications of \eqref{Householder Product} in tandem with the induction hypothesis and Lemma \ref{Concentration for specific functionals of theta}.

For the inductive step, we seek to prove  \eqref{vartheta = theta + epsilon} in the $k+1$ case. First, from the construction of the Householder matrix, $Q_k(t)e_{k+2}=\frac{\vartheta_k(t)}{||\vartheta_k(t)||}$. Once again recalling that $|\vartheta_j(t)^T \vartheta_{j'}(t)-\delta_{j,j'}| \leq C_{k,T} \frac{\log(n)}{\sqrt{n}}$ and $|\vartheta_j(t)^T e_{j'}|\leq C_{k,T} \frac{\log(n)}{\sqrt{n}}$ for $j, j' \leq k$, we have 

\begin{equation*}
    \begin{split}
        ~&Q_0(t)\cdots Q_{k-1}(t)Q_k(t)e_{k+2}\\
        &= \bigg(I+\sum_{i <k, j \leq k} (c_{i,j}(t) \vartheta_i(t)e_j^T+d_{i,j}(t) e_j\vartheta_i(t)^T)+\sum_{j,j' < k } f_{j,j'}(t) \vartheta_j(t)\vartheta_{j'}(t)^T\bigg) \frac{\vartheta_k(t)}{||\vartheta_k(t)||}\\
            & = \frac{\vartheta_k(t)}{||\vartheta_k(t)||}+\varepsilon_k(t),
    \end{split}
\end{equation*}
where $||\varepsilon_k(t)||\leq C_k\frac{\log(n)}{\sqrt{n}}$, and $\varepsilon_k(t) \in \mathrm{Span}\{\vartheta_j(t)\}_{j\leq k-1}+\mathrm{Span}\{e_j(t)\}_{2\leq j\leq k+1} \subset \Theta_{k-1}(t)+\mathcal{E}_{k}(t)$. Applying the induction hypothesis to the last equation display, we can write
\begin{equation*}
    Q_0(t)\cdots Q_{k-1}(t)Q_k(t)e_{k+2}  =\left(c_k(t)+\mu_k(t)\right)\frac{\theta_k(t)}{||\vartheta_{k}(t)||}+\varepsilon_k'(t),
\end{equation*}
where $||\varepsilon_k'(t)||\leq C_{k,T}\frac{\log(n)^k}{\sqrt{n}}$, and $\varepsilon_k'(t) \in \Theta_{k-1}(t)+\mathcal{E}_{k}(t)$. 

Using the three term recurrence for the Chebyshev polynomials \eqref{Three term reccurence} and the norm bound of Lemma \ref{Concentration for specific functionals of theta}, we have,
\begin{equation*}
    \begin{split}
        &\frac{1}{\sqrt{n}}\Tilde{M}(t)Q_0(t)\cdots Q_{k-1}(t)Q_k(t)e_{k+2} \\
        &= \frac{c_k(t)+\mu_k(t)}{||\vartheta_{k}(t)||}(\theta_{k+1}(t)+\theta_{k-1}(t))+ \varepsilon_{k+1}(t)\\
        &=\frac{c_k(t)+\mu_k(t)}{||\vartheta_{k}(t)||}(\theta_{k+1}(t)+\vartheta_{k-1}(t))+ \varepsilon_{k+1}'(t),
    \end{split}
\end{equation*}
where $||\varepsilon_{k+1}(t)||,||\varepsilon_{k+1}'(t)||\leq C_{k,T}\frac{\log(n)^{k+1}}{\sqrt{n}}$, and $\varepsilon_{k+1}(t), \varepsilon_{k+1}'(t) \in \Theta_{k}(t)+\Tilde{M}(t)\mathcal{E}_{k}(t) \subseteq \Theta_{k}(t)+\mathcal{E}_{k+1}(t)$. Then once again using the induction hypothesis, Lemma \ref{Concentration for specific functionals of theta}, and equation \eqref{Householder Product}, we get
\begin{equation*}
    \begin{split}
        &\frac{1}{\sqrt{n}}Q_{k-2}(t)\cdots Q_{0}(t)\Tilde{M}(t)Q_0(t)\cdots Q_{k-1}(t)Q_k(t)e_{k+2}\\
        &\hspace{2cm}= \frac{c_k(t)+\mu_k(t)}{||\vartheta_{k}(t)||}(\theta_{k+1}(t)+\vartheta_{k-1}(t))+ \varepsilon_{k+1}''(t),
    \end{split}
\end{equation*}
where $||\varepsilon_{k+1}''(t)||\leq C_{k,T}\frac{\log(n)^{k+1}}{\sqrt{n}}$, and $\varepsilon_{k+1}''(t) \in \Theta_{k}(t)+\mathcal{E}_{k}(t)+\Tilde{M}(t)\mathcal{E}_{k}(t) \subseteq \Theta_{k}(t)+\mathcal{E}_{k+1}(t)$. Recall that $Q_{k-1}(t)\vartheta_{k-1}(t) =||\vartheta_{k-1}(t)||e_{k+1}$, so applying $Q_{k-1}(t)$ on the left we have
\begin{equation*}
    \begin{split}
        &\frac{1}{\sqrt{n}}Q_{k-1}(t)\cdots Q_{0}(t)\Tilde{M}(t)Q_0(t) \cdots Q_{k-1}(t)Q_k(t)e_{k+2} \\
        &\hspace{2cm}= \frac{c_k(t)+\mu_k(t)}{||\vartheta_{k}(t)||}(\theta_{k+1}(t)+||\vartheta_{k-1}(t)||e_{k+1})+ \varepsilon_{k+1}'''(t),
    \end{split}
\end{equation*}
where $||\varepsilon_{k+1}'''(t)||\leq C_{k,T}\frac{\log(n)^{k+1}}{\sqrt{n}}$, and $\varepsilon_{k+1}'''(t) \in \Theta_{k}(t)+\mathcal{E}_{k+1}(t)$. Finally, applying $Q_k(t)$ on the left we obtain
\begin{equation*}
    \begin{split}
        &\frac{1}{\sqrt{n}}Q_k(t)Q_{k-1}(t)\cdots Q_{0}(t)\Tilde{M}(t)Q_0(t)\cdots Q_{k-1}(t)Q_k(t)e_{k+2} \\
        &\hspace{2cm} = \frac{c_k(t)+\mu_k(t)}{||\vartheta_{k}(t)||}(\theta_{k+1}(t)+||\vartheta_{k-1}(t)||e_{k+1})+ \varepsilon_{k+1}^{(4)}(t),
    \end{split}
\end{equation*}
with $||\varepsilon_{k+1}^{(4)}(t)||\leq C_{k,T}\frac{\log(n)}{\sqrt{n}}$, and $\varepsilon_{k+1}^{(4)}(t) \in \Theta_{k}(t)+\mathcal{E}_{k+1}(t)$. 

For the next step we estimate the error from $M(t)\approx \Tilde{M}(t)$. Using \eqref{Householder Product} and the fact that the first $k$ entries of $\vartheta_k(t)$ are zero by definition, we first see that
\begin{equation}\label{eq:Q's-fix-basis}
    Q_k(t)Q_{k-1}(t)\cdots Q_{0}(t)e_1=e_1.
\end{equation}
So plugging in the definition $\tilde{M}(t)=(I-e_1e_1^T)M(t)(I-e_1e_1^T)$ and applying \eqref{eq:Q's-fix-basis},
\begin{equation*}
    \begin{split}
        ~&\frac{1}{\sqrt{n}}Q_k(t)Q_{k-1}(t)\cdots Q_{0}(t)(M(t)-\tilde{M}(t))Q_0(t)\cdots Q_{k-1}(t)Q_k(t)\\
        &=\frac{1}{\sqrt{n}}Q_{k}(t)\cdots Q_{0}(t)\left(e_1 e_1^T M(t)+M(t) e_1e_1^T-e_1e_1^TM(t)e_1e_1^T\right)\\
        &Q_0(t)\cdots Q_{k-1}(t)Q_{k-1}(t)\\
        &=\frac{1}{\sqrt{n}}(e_1e_1^T M(t)Q_0(t)\cdots Q_{k-1}(t)Q_{k-1}(t)\\
        &+(e_1e_1^T M(t)Q_0(t)\cdots Q_{k-1}(t)Q_{k-1}(t))^T+ e_1e_1^T M(t)e_1 e_1^T) \\
        &=\frac{1}{\sqrt{n}}(e_1e_1^T M(t)+M(t)e_1e_1^T+e_1e_1^T M(t)e_1 e_1^T)+E_k(t)\\ 
        &= \frac{1}{\sqrt{n}}(M(t)-\tilde{M}(t))+E_k(t),
    \end{split}
\end{equation*}
where $\|E_k(t)\|\leq C_k\frac{\log(n)}{\sqrt{n}}$, and $\mathrm{Range}(E_k(t)) \subset \Theta_{k-1}(t)+\mathcal{E}_{k+1}(t)$. These properties of the error term in the above equation display, $E_k(t)$, follow from equation \eqref{Householder Product} and Lemma \ref{Concentration for specific functionals of theta} once again. In particular, we have
\begin{equation*}
    \|\frac{1}{\sqrt{n}}Q_{k}(t)\cdots Q_{0}(t)(M(t)-\Tilde{M}(t))Q_0(t)\cdots Q_{k-1}(t)Q_k(t)e_{k+2}\| \leq C_{k,T}\frac{\log(n)}{\sqrt{n}},
\end{equation*}
with $\frac{1}{\sqrt{n}}Q_{k}(t)\cdots Q_{0}(t)(M(t)-\Tilde{M}(t))Q_0(t)\cdots Q_{k-1}(t)Q_k(t)e_{k+2} \in \Theta_{k}(t)+\mathcal{E}_{k+1}(t)$. 

Combining everything so far, we have
\begin{equation*}
    \begin{split}
        \frac{1}{\sqrt{n}}M_{k+1}(t)e_{k+2}=\frac{c_k(t)+\mu_k(t)}{||\vartheta_{k}(t)||}\theta_{k+1}(t)+ \varepsilon_{k+1}^{(5)}(t),
    \end{split}
\end{equation*}
with $\|\varepsilon_{k+1}^{(5)}(t)\|\leq C_{k,T}\frac{\log(n)^{k+1}}{\sqrt{n}}$, and $\varepsilon_k^{(5)}(t) \in \Theta_{k}(t)+\mathcal{E}_{k}(t)+\Tilde{M}(t)\mathcal{E}_{k}(t)$. However, by the induction hypothesis we have
\begin{equation}\label{Norm of theta computation}
    \begin{split}
        ||\vartheta_k(t)||^2&=(c_k(t)+\mu_k(t))^2||\theta_k(t)||^2+2(c_k(t)+\mu_k(t))\epsilon_k(t)^T\theta_k(t)+||\epsilon_k(t)||^2\\
        &=c_k(t)^2||\theta_k(t)||^2+O(\log(n)^{2k}/n),
    \end{split}
\end{equation}
and thus similarly $\frac{1}{\|\vartheta_k(t)\|}=\frac{1}{c_k(t)\|\theta_k(t)\|}+O(\log(n)^{2k}/n)$. Finally we deduce that 
\begin{equation*}
    \begin{split}
        \vartheta_{k+1}(t)&=\frac{1}{\sqrt{n}}M_{k+1}(t)e_{k+2}-\|\vartheta_{k-1}(t)\|e_{k+1}-\frac{a_{k+1}(t)}{\sqrt{n}}\\
        &=\bigg(\frac{1}{\|\theta_{k}(t)\|}+O(\log(n)^{2k}/n)\bigg)(\theta_{k+1}(t)+\|\vartheta_{k-1}(t)\|e_{k+1}+ \epsilon_{k+1}'(t))\\
        &\hspace{7cm}-\|\vartheta_{k-1}(t)\|e_{k+1}-\frac{a_{k+1}(t)}{\sqrt{n}}\\
        &=c_{k+1}(t)\theta_{k+1}(t)+\mu_{k+1}(t)\theta_{k+1}(t)+\epsilon_{k+1}(t).
    \end{split}
\end{equation*}
With $c_{k+1}(t),\mu_{k+1}(t),\epsilon_{k+1}(t)$ all satisfying the desired properties. Here we used the fact that $|a_{k+1}(t)| \leq C_{k,T}$ which is easy to prove by the induction hypothesis and Lemma \ref{Concentration for specific functionals of theta}. This completes the proof. 
\end{proof}

We are now almost in the position to analyze the tridiagonal entries $a_j(t)$ and $b_j(t)$. But, let us first state a simple corollary of the last proposition.
\begin{cor}\label{vartheta expansion and analysis}
    There is a $C=C(T,k)>0$ such that on a set $A_k$ with $\pr(A_k)>1-c(T,k)n^{-2}$ the following holds.
    \begin{enumerate}
        \item For any deterministic unit vector $v \in V_{k}=\mathrm{Span}(\{e_j\}_{j=1}^k)$ and $x(t) \in \{\vartheta_j(t)\}_{j\leq k} \cup\{M(t)^{j_1}e_{j_2+2}\}_{0 \leq j_1 \leq k, 0 \leq j_2 \leq k-1}$
    \begin{equation}
        \sup_{t \in [0,T]} |\la x(t),v \ra| \leq \frac{C\log(n)^k}{\sqrt{n}}.
        \label{Single Component bound var}
    \end{equation}
    In particular, the inequality also holds for $x(t)=\vartheta_j(t)$ for $j=1,\dots ,k$.
    \item For $j, \ell =0,1,\dots,k$,
    \begin{equation}
        \sup_{t \in [0,T]} |\vartheta_j(t)^T\vartheta_{\ell}(t)-\delta_{j,\ell}| \leq \frac{C\log(n)}{\sqrt{n}}.
        \label{inner product concentration var}
    \end{equation}
    \item For $j, \ell =0,1,\dots ,k$, we have
    \begin{equation}\label{Robust vartheta expansion}
        \begin{split}
            ~&\vartheta_j(t)=(c_j(t)+\mu_j(t))\theta_j(t)+\\
            &\bigg( \vartheta_j(t)^T\frac{\theta_{j-1}(t)}{\|\theta_{j-1}(t)\|^2}-c_j(t)\theta_j(t)^T\theta_{j-1}(t)+\tilde{\mu}_j(t)\bigg)\theta_{j-1}(t)+\tilde{\epsilon}_{j}(t)
        \end{split}
    \end{equation}
     with $c_j(t):=\frac{1}{\|\theta_{j-1}(t)\|}$, $\tilde{\epsilon}_{j}(t) \in \Theta_{j-2}(t)+ \mathcal{E}_j(t)$, $\sup_{t \in [0,T]}||\tilde{\epsilon}_{j}(t)|| \leq C_{k,T} \frac{\log(n)^{k}}{\sqrt{n}}$, and $\sup_{t \in [0,T]} (|\mu_j(t)|+|\tilde{\mu}_j(t)|) \leq C_{k,T}\frac{\log(n)^{2(k-1)}}{n}$.
    \end{enumerate} 
\end{cor}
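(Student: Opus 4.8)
The plan is to read off all three parts from the decomposition $\vartheta_j(t)=(c_j(t)+\mu_j(t))\theta_j(t)+\epsilon_j(t)$ of the preceding proposition (valid for every $j\le k$ once $n$ is large, on an event of probability $\ge 1-C'(T,k)n^{-2}$), combined with the near-orthonormality \eqref{inner product concentration} and the component bound \eqref{Single Component bound} of Lemma~\ref{Concentration for specific functionals of theta}; intersecting the finitely many relevant events produces the set $A_k$ with $\pr(A_k)\ge 1-C'(T,k)n^{-2}$. Part (1) is then immediate: $\theta_j(t)\in\Theta_k(t)$ and $\epsilon_j(t)\in\Theta_{j-1}(t)+\mathcal{E}_j(t)\subseteq\Theta_k(t)+\mathcal{E}_{2k}(t)$, so $\vartheta_j(t)\in\mathcal{E}_{2k}(t)\oplus\Theta_k(t)$, and since $c_j(t)=\|\theta_{j-1}(t)\|^{-1}\le 2$ by \eqref{inner product concentration}, $|\mu_j(t)|\le C_k\log n/n$ and $\|\epsilon_j(t)\|\le C_k\log n/\sqrt n$, the vector $\vartheta_j(t)$ is a bounded-norm element of $\mathcal{E}_{2k}(t)\oplus\Theta_{2k}(t)$, so \eqref{Single Component bound var} is just \eqref{Single Component bound}. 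For part (2) I would expand $\vartheta_j(t)^T\vartheta_\ell(t)$ using the decomposition: every term carrying an $\epsilon$ or a $\mu$ is $O(\log n/\sqrt n)$ by the above bounds and $\|\theta_j(t)\|\le 2$, whereas the leading term $(c_j+\mu_j)(c_\ell+\mu_\ell)\theta_j^T\theta_\ell$ is $O(\log n/\sqrt n)$ for $j\neq\ell$ and equals $c_j^2\|\theta_j\|^2+O(\log n/n)=\|\theta_j\|^2/\|\theta_{j-1}\|^2+O(\log n/n)=1+O(\log n/\sqrt n)$ for $j=\ell$ by \eqref{inner product concentration} (for $j=\ell=0$ use $\vartheta_0=\theta_0$ directly), which is \eqref{inner product concentration var}.

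For part (3), fix $1\le j\le k$ (the case $j=0$ is trivial). Writing $\Theta_{j-1}(t)=\mathrm{Span}(\theta_{j-1}(t))+\Theta_{j-2}(t)$ and using $\epsilon_j(t)\in\Theta_{j-1}(t)+\mathcal{E}_j(t)$, I split $\epsilon_j(t)=\nu_j(t)\theta_{j-1}(t)+\tilde\epsilon_j(t)$ in the direct sum $\mathrm{Span}(\theta_{j-1}(t))\oplus(\Theta_{j-2}(t)+\mathcal{E}_j(t))$, so that $\tilde\epsilon_j(t)\in\Theta_{j-2}(t)+\mathcal{E}_j(t)$ and the expansion \eqref{Robust vartheta expansion} holds with $\theta_{j-1}$-coefficient $\nu_j(t)$. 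Taking the inner product of the expansion with $\theta_{j-1}(t)$ and solving for $\nu_j(t)$, the discrepancy $\tilde\mu_j(t):=\nu_j(t)-\vartheta_j(t)^T\theta_{j-1}(t)\|\theta_{j-1}(t)\|^{-2}+c_j(t)\theta_j(t)^T\theta_{j-1}(t)$ works out to
\[
\tilde\mu_j=\theta_j^T\theta_{j-1}\Bigl(c_j-\frac{c_j+\mu_j}{\|\theta_{j-1}\|^2}\Bigr)-\frac{\tilde\epsilon_j^T\theta_{j-1}}{\|\theta_{j-1}\|^2},
\]
and \eqref{inner product concentration} bounds the first term by $O((\log n)^2/n)$. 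The remaining norm bounds on $\nu_j,\tilde\epsilon_j$ and the bound on the second term all follow once one knows the
\[
\textit{Claim:}\qquad \sup_{t\in[0,T]}\bigl\|P_{\Theta_{j-2}(t)+\mathcal{E}_j(t)}\,\theta_{j-1}(t)\bigr\|\le C(T,k)\,\frac{\log n}{\sqrt n},
\]
i.e.\ $\theta_{j-1}(t)$ is nearly orthogonal to the finite-dimensional subspace $\Theta_{j-2}(t)+\mathcal{E}_j(t)$. Indeed, given the Claim, $\|P^\perp\theta_{j-1}\|\ge 1/2$ forces $|\nu_j|\le 2\|\epsilon_j\|=O(\log n/\sqrt n)$, hence $\|\tilde\epsilon_j\|\le\|\epsilon_j\|+|\nu_j|\,\|\theta_{j-1}\|=O(\log n/\sqrt n)$; and since $\tilde\epsilon_j\in\Theta_{j-2}+\mathcal{E}_j$ we get $|\tilde\epsilon_j^T\theta_{j-1}|=|\tilde\epsilon_j^T P_{\Theta_{j-2}+\mathcal{E}_j}\theta_{j-1}|\le\|\tilde\epsilon_j\|\cdot C\log n/\sqrt n=O((\log n)^2/n)$, so $|\tilde\mu_j|=O((\log n)^2/n)$. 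Taking suprema over $t\in[0,T]$ and over the finitely many $j\le k$ then finishes part (3).

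The one non-mechanical ingredient is the Claim, and that is where I expect the real work to lie. For the $\Theta_{j-2}$ directions it is just \eqref{inner product concentration}; the substance is that $\theta_{j-1}(t)=P_{j-1}(\tilde M(t)/\sqrt n)\theta(t)$ is asymptotically orthogonal to $\mathcal{E}_j(t)=\mathrm{Span}\{M(t)^a e_{b+2}:0\le a\le j,\ 0\le b\le j-1\}$. I would prove this in two steps. First, using that $\theta(t)$ restricted to its nonzero coordinates is $\mathbf{OU}_n$ independent of $\tilde M(t)$ and that $M(t)$ and $\tilde M(t)$ differ only in the first row and column, a Wick/semicircle moment computation of the same flavour as the ones used elsewhere in the paper gives $\E[(\theta_{j-1}^T v)^2]=O(\|v\|^2/n)$ for each generator $v=M^a e_{b+2}$, and more generally that the Gram matrix of the normalized generators together with $\theta_0,\dots,\theta_{j-2}$ is in expectation $O(1/\sqrt n)$-close to one with respect to which $\theta_{j-1}$ is orthogonal to their span; summing the finitely many contributions yields $\E[\|P_{\Theta_{j-2}+\mathcal{E}_j}\theta_{j-1}\|^2]=O(1/n)$. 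Second, I would upgrade this to the uniform-in-$t$, high-probability statement by applying the concentration inequality of Lemma~\ref{Concentration Bound} to the functional $(\tilde M/\sqrt n,\theta_0,\dots,\theta_{j-1})\mapsto\|P_{\Theta_{j-2}+\mathcal{E}_j}\theta_{j-1}\|^2$, which is Lipschitz on compacts and of polynomial growth. (If one were content with the weaker bound $|\tilde\mu_j|=O(\log n/\sqrt n)$, the Claim could be dropped and parts (1)--(3) would be entirely routine consequences of the proposition and Lemma~\ref{Concentration for specific functionals of theta}.)
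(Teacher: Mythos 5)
Your proposal is correct and follows essentially the same route as the paper: parts (1)--(2) are read off from the decomposition \eqref{vartheta = theta + epsilon} together with Lemma~\ref{Concentration for specific functionals of theta}, and part (3) is obtained by isolating the $\theta_{j-1}(t)$ component and solving for its coefficient by pairing with $\theta_{j-1}(t)$, which is exactly the paper's computation with its error term $\nu_j(t)=\Tilde{\epsilon}_j(t)^T\theta_{j-1}(t)$. The near-orthogonality ``Claim'' you single out is precisely what the paper asserts implicitly when it bounds $\nu_j(t)$ by $C_k\log(n)/n$, and your route to it (small expected projection of $\theta_{j-1}(t)$ onto $\Theta_{j-2}(t)+\mathcal{E}_j(t)$ upgraded via Lemma~\ref{Concentration Bound}) is the same machinery the paper relies on, while the $(\log n)^2/n$ versus $\log(n)/n$ discrepancy is a harmless bookkeeping difference that the paper itself incurs (cf.\ \eqref{Norm of theta computation}).
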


\begin{proof}
    The first two statements above are immediate corollaries of \eqref{vartheta = theta + epsilon} and Lemma~\ref{Concentration for specific functionals of theta}. For the third statement, by \eqref{vartheta = theta + epsilon}, we must be able to express
    \begin{equation}\label{Undetermined vartheta expansion}
        \vartheta_j(t)=(c_j(t)+\mu_j(t))\theta_j(t)+\Tilde{c}_{j}(t)\theta_{j-1}(t)+\Tilde{\epsilon}_{j}(t),
    \end{equation}
    with some coefficient $\Tilde{c}_{j}(t)$, and some $\Tilde{\epsilon}_{j}(t) \in \Theta_{j-2}(t)+ \mathcal{E}_j(t)$, with 
    $\sup_{t \in [0,T]}||\tilde{\epsilon}_{j}(t)|| \leq C_{k,T} \frac{\log(n)^k}{\sqrt{n}}$ where we have implicitly used the approximate orthogonality between $\theta_{j-1}(t)$ and $\Theta_{j-2}(t)+ \mathcal{E}_j(t)$ from Lemma \ref{Concentration for specific functionals of theta}. To solve for $\tilde{c}_j(t)$, simply take the inner product of both sides of \eqref{Undetermined vartheta expansion} with $\theta_{j-1}(t)$ to obtain
    \begin{equation}
        \vartheta_j(t)^T\theta_{j-1}(t)=(c_j(t)+\mu_j(t))\theta_j(t)^T\theta_{j-1}(t)+\Tilde{c}_j(t)\|\theta_{j-1}(t)\|^2+\nu_j(t),
    \end{equation}
    where $\sup_{t \in [0,T]} |\nu_j(t)|\leq C_{k,T}\frac{\log(n)^k}{n}$. Rearranging the equation above yields the desired result.
\end{proof}

Now we are ready to approximate $a_k(t)$ and $b_k(t)$.
\begin{prop}
    On the event $A_k$ defined in Lemma \ref{Concentration for specific functionals of theta}, for $n$ sufficiently large, we have
    \begin{equation}\label{a_k approx equation}
        \sup_{t \in [0,T]}|a_{k+1}(t)-\sqrt{n}(\theta_{k}(t)-\theta_{k-2}(t))^T\theta_{k-1}(t)| \leq C_{k,T}\frac{\log(n)^k}{\sqrt{n}},
    \end{equation}
    and
    \begin{equation}\label{b_k approx equation}
        \sup_{t \in [0,T]}\left|b_{k+1}(t)^2-\frac{\|\theta_k(t)\|^2}{\|\theta_{k-1}(t)\|^2}\right| \leq C_{k,T}\frac{\log(n)^{2(k-1)}}{n}.
    \end{equation}
\end{prop}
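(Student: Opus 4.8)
The plan is to reduce both bounds to identities in the vectors $\vartheta_j(t)$ produced by the recursion, then feed in the expansions \eqref{vartheta = theta + epsilon}, \eqref{Robust vartheta expansion} and \eqref{Householder Product} together with the concentration estimates of Corollary~\ref{vartheta expansion and analysis} and Lemma~\ref{Concentration for specific functionals of theta}, all valid on $A_k$. The bound \eqref{b_k approx equation} is the easy half and is essentially already contained in the previous proof: since $b_{k+1}(t)=\|\vartheta_k(t)\|$, substituting $\vartheta_k=(c_k+\mu_k)\theta_k+\epsilon_k$ and using that $\epsilon_k(t)\in\Theta_{k-1}(t)+\mathcal{E}_k(t)$ is nearly orthogonal to $\theta_k(t)$ by \eqref{inner product concentration} is exactly the computation \eqref{Norm of theta computation}, giving $\|\vartheta_k(t)\|^2=c_k(t)^2\|\theta_k(t)\|^2+O(\log^2 n/n)$ uniformly on $[0,T]$; since $c_k(t)=1/\|\theta_{k-1}(t)\|$ and $\|\theta_{k-1}(t)\|$ is bounded away from $0$ on $A_k$, this is \eqref{b_k approx equation}.

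For \eqref{a_k approx equation} I would first record the exact identity
\[
a_{k+1}(t)=\frac{\vartheta_{k-1}(t)^T M_{k-1}(t)\,\vartheta_{k-1}(t)}{\|\vartheta_{k-1}(t)\|^2},
\]
obtained by writing $a_{k+1}=e_{k+1}^T M_k e_{k+1}$, using $M_k=Q_{k-1}M_{k-1}Q_{k-1}$ and the defining property $Q_{k-1}e_{k+1}=\vartheta_{k-1}/\|\vartheta_{k-1}\|$. I would then undo the remaining conjugations: $M_{k-1}=Q_{k-2}\cdots Q_0\,M\,Q_0\cdots Q_{k-2}$, so the numerator is $w(t)^T M(t)\,w(t)$ with $w=Q_0\cdots Q_{k-2}\vartheta_{k-1}$, and \eqref{Householder Product} (applied with $k-1$ in place of $k$) gives $w=\vartheta_{k-1}+\varepsilon$ with $\varepsilon(t)\in\Theta_{k-2}(t)+\mathcal{E}_{k-1}(t)$ and $\sup_{[0,T]}\|\varepsilon\|\le C_k\log n/\sqrt n$. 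Because $\vartheta_{k-1}$, all the $\theta_j$, and $\varepsilon$ have vanishing first coordinate, one may replace $M$ by $\tilde M$ inside every bilinear form that arises. Expanding $w^TMw$ and substituting the robust expansion \eqref{Robust vartheta expansion} for $\vartheta_{k-1}$, I would then use the three-term recurrence \eqref{Three term reccurence} in its operator form $\tilde M\,\theta_j=\sqrt n(\theta_{j+1}+\theta_{j-1})$ to convert every $\tilde M$ into a combination of the $\theta_m$'s, reducing $a_{k+1}(t)$ to an expression built only from the inner products $\theta_m^T\theta_{m'}$, the coefficients $c_{k-1},\mu_{k-1}$, the $\theta_{k-2}$-correction coefficient of \eqref{Robust vartheta expansion}, and controlled remainders.

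The crux, and the step I expect to be the main obstacle, is the cancellation bookkeeping in this reduced expression, since $\|\tilde M\|\asymp 2\sqrt n$ means that a naive ``discard the small vector'' estimate loses a factor $\sqrt n$: individual terms such as $c_{k-1}^2\,\theta_{k-1}^T\tilde M\theta_{k-1}=c_{k-1}^2\sqrt n\,(\theta_{k-1}^T\theta_k+\theta_{k-1}^T\theta_{k-2})$ and $c_{k-1}\delta_{k-1}\,\theta_{k-1}^T\tilde M\theta_{k-2}$ are only $O(\log n)$, not $o(1)$. The resolution is that the target, rewritten through $\theta_k+\theta_{k-2}=\tfrac1{\sqrt n}\tilde M\theta_{k-1}$, equals $\theta_{k-1}^T\tilde M\theta_{k-1}-2\sqrt n\,\theta_{k-1}^T\theta_{k-2}$, and the $O(\sqrt n)$-amplified contributions assemble into exactly this combination: the spurious $2\sqrt n\,\theta_{k-1}^T\theta_{k-2}$ coming from the Chebyshev structure is matched by the $\theta_{k-2}$-correction in \eqref{Robust vartheta expansion}, whose coefficient is forced to be $\approx -c_{k-1}\,\theta_{k-1}^T\theta_{k-2}$ precisely because the Householder step makes $\vartheta_{k-1}$ almost orthogonal to $\theta_{k-2}$. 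The remaining task is to show that what is left after this cancellation, together with the contributions of the cross terms $\vartheta_{k-1}^T M\varepsilon$ and $\varepsilon^T M\varepsilon$ from $w=\vartheta_{k-1}+\varepsilon$, is $O(\log n/\sqrt n)$ uniformly on $[0,T]$; this is where one must exploit that every residual vector lives in one of the low-dimensional spaces $\Theta_\bullet(t)$ or $\mathcal{E}_\bullet(t)$, so that \eqref{Single Component bound var}, \eqref{inner product concentration var} and \eqref{Norm of theta computation} can be invoked in place of operator-norm bounds, finishing the proof.
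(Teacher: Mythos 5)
Your overall route is the paper's: the exact identity $a_{k+1}=\vartheta_{k-1}^T M_{k-1}\vartheta_{k-1}/\|\vartheta_{k-1}\|^2$, undoing the conjugations back to $M(t)$, feeding in \eqref{Householder Product} and \eqref{Robust vartheta expansion}, converting $\Tilde{M}$ via the recurrence \eqref{Three term reccurence}, and, for \eqref{b_k approx equation}, the computation \eqref{Norm of theta computation} — this is exactly how the paper argues, and your $b$-half is complete. However, there is a genuine misstep at the crux of the $a$-half. You justify the key cancellation by asserting that the $\theta_{k-2}$-correction coefficient in \eqref{Robust vartheta expansion} is $\approx -c_{k-1}\,\theta_{k-1}^T\theta_{k-2}$ ``because the Householder step makes $\vartheta_{k-1}$ almost orthogonal to $\theta_{k-2}$.'' That is false at the accuracy you need: Corollary~\ref{vartheta expansion and analysis} only gives $|\vartheta_{k-1}^T\theta_{k-2}|\leq C\log(n)/\sqrt{n}$, and this inner product is genuinely of order $n^{-1/2}$ (already $\vartheta_1^T\theta_0=\vartheta_1^T\vartheta_0$ is a nondegenerate random quantity of that size), so in \eqref{Robust vartheta expansion} the term $\vartheta_{k-1}^T\theta_{k-2}/\|\theta_{k-2}\|^2$ is of the same order as $c_{k-1}\theta_{k-1}^T\theta_{k-2}$ and cannot be dropped. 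Since any $\theta_{k-2}$-component of size $\log(n)/\sqrt{n}$ in your vector $w$ pairs with $\frac{1}{\sqrt{n}}\Tilde{M}\theta_{k-1}=\theta_k+\theta_{k-2}$ to produce an $O(\log n)$ contribution, your bookkeeping as written does not close; for the same reason, lumping $Q_0\cdots Q_{k-2}\vartheta_{k-1}=\vartheta_{k-1}+\varepsilon$ with only the information $\varepsilon\in\Theta_{k-2}(t)+\mathcal{E}_{k-1}(t)$, $\|\varepsilon\|\leq C_k\log(n)/\sqrt{n}$ is not enough, because $\Theta_{k-2}$ contains the dangerous direction $\theta_{k-2}$.

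The missing ingredient — and the reason the paper peels off $Q_{k-2}$ explicitly before invoking \eqref{Householder Product} — is that the reflection $Q_{k-2}$ itself supplies the cancelling term. Since $\vartheta_{k-1}$ has vanishing $k$-th coordinate (by its definition, $e_k^T\vartheta_{k-1}=\frac{1}{\sqrt n}e_k^TM_{k-1}e_k-\frac{a_k}{\sqrt n}=0$) and $Q_{k-2}\vartheta_{k-2}=\|\vartheta_{k-2}\|e_k$, one gets $(Q_{k-2}\vartheta_{k-1})^T\vartheta_{k-2}=\|\vartheta_{k-2}\|\,e_k^T\vartheta_{k-1}=0$: it is the reflected vector $Q_{k-2}\vartheta_{k-1}$, not $\vartheta_{k-1}$, that is (exactly) orthogonal to $\vartheta_{k-2}$. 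Concretely, $Q_{k-2}$ subtracts a term $\approx(\vartheta_{k-2}^T\vartheta_{k-1})\vartheta_{k-2}$, and this cancels the $\vartheta_{k-1}^T\theta_{k-2}/\|\theta_{k-2}\|^2$ part of the coefficient in \eqref{Robust vartheta expansion}, leaving the working vector $\theta_{k-1}-(\theta_{k-1}^T\theta_{k-2})\theta_{k-2}$ up to errors that are either $O(\log(n)/n)$ along $\mathrm{Span}\{\theta_{k-1},\theta_{k-2}\}$ or $O(\log(n)/\sqrt{n})$ but confined to $\Theta_{k-3}(t)+\mathcal{E}_{k-1}(t)$; only then does the three-term recurrence yield $\sqrt{n}(\theta_k-\theta_{k-2})^T\theta_{k-1}$ with an $O(\log(n)/\sqrt{n})$ remainder. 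So the cancellation you want is real, but it lives inside the $\varepsilon$ you discarded: you must extract the $\vartheta_{k-2}$-component of $Q_0\cdots Q_{k-2}\vartheta_{k-1}$ with its explicit coefficient, which is precisely how the paper's proof of \eqref{a_k approx equation} is organized.
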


\begin{proof}
    First, the proof of \eqref{b_k approx equation} is a straightforward consequence of \eqref{vartheta = theta + epsilon}, and was already given in \eqref{Norm of theta computation}.

    \par To prove \eqref{a_k approx equation}, we first recall that $Q_{k-1}(t)$ is constructed to be the Householder matrix with the property that $Q_{k-1}(t)e_{k+1}=\frac{\vartheta_{k-1}(t)}{\|\vartheta_{k-1}(t)\|}$. Next we compute
    \begin{equation}\label{eq:Q(k-2)-comp}
        \begin{split}
        Q_{k-2}(t)\vartheta_{k-1}(t)&=\vartheta_{k-1}(t)-\frac{2(\vartheta_{k-2}(t)-\|\vartheta_{k-2}(t)\|e_{k})^T \vartheta_{k-1}(t)}{\|\vartheta_{k-2}(t)-\|\vartheta_{k-2}(t)\|e_{k}\|^2} (\vartheta_{k-2}(t)-e_k)\\
        &=\vartheta_{k-1}(t)-\frac{2\vartheta_{k-2}(t)^T \vartheta_{k-1}(t)}{\|\vartheta_{k-2}(t)-\|\vartheta_{k-2}(t)\|e_{k}\|^2} \vartheta_{k-2}(t)+\delta(t),
        \end{split}
    \end{equation}
     where using the fact that the first $k$ entries of $\vartheta_{k-1}(t)$ are $0$,
     \begin{equation*}
     \begin{split}
         \delta(t)&= \frac{2(\vartheta_{k-2}(t)-\|\vartheta_{k-2}(t)\|e_{k})^T \vartheta_{k-1}(t)}{\|\vartheta_{k-2}(t)-\|\vartheta_{k-2}(t)\|e_{k}\|^2}e_{k}-\frac{2e_k^T \vartheta_{k-1}(t)}{\|\vartheta_{k-2}(t)-\|\vartheta_{k-2}(t)\|e_{k}\|^2}\vartheta_{k-2}(t)\\
         &= \frac{2(\vartheta_{k-2}(t)-\|\vartheta_{k-2}(t)\|e_{k})^T \vartheta_{k-1}(t)}{\|\vartheta_{k-2}(t)-\|\vartheta_{k-2}(t)\|e_{k}\|^2}e_{k} \in \mathcal{E}_{k-2}
     \end{split}
     \end{equation*}
     and $\|\delta(t)\| \leq C_{k,T} \frac{\log(n)^{k-1}}{\sqrt{n}}$ by Corollary \ref{vartheta expansion and analysis}. Combining this computation \eqref{eq:Q(k-2)-comp} with the expansion of the product of the Householder matrices \eqref{Householder Product}, starting from the definition of $a_{k+1}(t)$, we obtain,
    \begin{equation*}
        \begin{split}
            a_{k+1}(t)&=e_{k+1}^T M_k(t) e_{k+1}\\
            &=e_{k+1}^T Q_{k-1}(t)Q_{k-2}(t)\cdots Q_0(t)M(t)Q_0(t)\cdots Q_{k-2}(t)Q_{k-1}(t)e_{k+1}\\
            &=\frac{\vartheta_{k-1}(t)^T}{\|\vartheta_{k-1}(t)\|}Q_{k-2}(t)\cdots Q_0(t)M(t)Q_0(t)\cdots Q_{k-2}(t)\frac{\vartheta_{k-1}(t)}{\|\vartheta_{k-1}(t)\|}\\
            &=\bigg(\vartheta_{k-1}(t)-\frac{2\vartheta_{k-2}(t)^T \vartheta_{k-1}(t)}{\|\vartheta_{k-2}(t)-\|\vartheta_{k-2}(t)\|e_{k}\|^2} \vartheta_{k-2}(t)+\delta(t)\bigg)^TQ_{k-3}(t)\cdots Q_{0}(t)\\
            &M(t)Q_{0}(t)\cdots Q_{k-3}(t)\bigg(\vartheta_{k-1}(t)-\frac{2\vartheta_{k-2}(t)^T \vartheta_{k-1}(t)}{\|\vartheta_{k-2}(t)-\|\vartheta_{k-2}(t)\|e_{k}\|^2} \vartheta_{k-2}(t)+\delta(t)\bigg)\\
            &= \bigg(\frac{\vartheta_{k-1}(t)}{\|\vartheta_{k-1}(t)\|}-\frac{2\vartheta_{k-2}(t)^T \vartheta_{k-1}(t)}{\|\vartheta_{k-1}(t)\|\|\vartheta_{k-2}(t)-e_{k}\|^2} \vartheta_{k-2}(t)+\delta'(t)\bigg)^T \\
            &M(t) \bigg(\frac{\vartheta_{k-1}(t)}{\|\vartheta_{k-1}(t)\|}-\frac{2\vartheta_{k-2}(t)^T \vartheta_{k-1}(t)}{\|\vartheta_{k-1}(t)\|
            |\vartheta_{k-2}(t)-e_{k}\|^2} \vartheta_{k-2}(t)+\delta'(t)\bigg) ,
        \end{split}
    \end{equation*}
    where $\delta'(t) \in \Theta_{k-3}(t)+\mathcal{E}_{k-2}(t)$, with $\|\delta'(t)\| \leq C_{k,T} \frac{\log(n)}{\sqrt{n}}$. Next, by Corollary \ref{vartheta expansion and analysis} and in particular \eqref{Robust vartheta expansion}, we have,
    \begin{equation*}
        \begin{split}
            &\frac{\vartheta_{k-1}(t)}{\|\vartheta_{k-1}(t)\|}-\frac{2\vartheta_{k-2}(t)^T \vartheta_{k-1}(t)}{\|\vartheta_{k-1}(t)\|\cdot\|\vartheta_{k-2}(t)-e_{k}\|^2} \vartheta_{k-2}(t)+\delta'(t) \\
            &= (\theta_{k-1}(t)+(\vartheta_{k-1}(t)^T\theta_{k-2}(t)-\theta_{k-1}(t)^T\theta_{k-2}(t))\theta_{k-2}(t)-\theta_{k-2}(t)^T \vartheta_{k-1}(t) \theta_{k-2}(t)+\delta''(t)+\delta'''(t)\\
            &=\theta_{k-1}(t)-(\theta_{k-1}(t)^T\theta_{k-2}(t))\theta_{k-2}(t)+\delta''(t)+\delta'''(t),
        \end{split}
    \end{equation*}   
    with $\delta''(t) \in \mathrm{Span}\{\theta_{k-1}(t),\theta_{k-2}(t)\}$, $\delta'''(t) \in \Theta_{k-3}(t)+\mathcal{E}_{k-1}(t)$, along with the bounds $\|\delta''(t)\| \leq C_{k,T} \frac{\log(n)^{2(k-2)}}{n}$, and $||\delta'''(t)|| \leq C_k \frac{\log(n)^{k-1}}{\sqrt{n}}$. 
    
    Combining everything so far and applying the three-term recurrence and Lemma~\ref{Concentration for specific functionals of theta}, we have
    \begin{equation*}
        \begin{split}
             a_{k+1}(t) &=(\theta_{k-1}(t)-\theta_{k-2}(t)^T \theta_{k-1}(t) \theta_{k-2}(t)+\delta''(t)+\delta'''(t))^T\\
             &M(t)(\theta_{k-1}(t)-\theta_{k-2}(t)^T \theta_{k-1}(t) \theta_{k-2}(t)+\delta''(t)+\delta'''(t))\\
             &=(\theta_{k-1}(t)-\theta_{k-2}(t)^T \theta_{k-1}(t) \theta_{k-2}(t)+\delta''(t)+\delta'''(t))^T\\
             &\Tilde{M}(t)(\theta_{k-1}(t)-\theta_{k-2}(t)^T \theta_{k-1}(t) \theta_{k-2}(t)+\delta''(t)+\delta'''(t))\\
             &+(\theta_{k-1}(t)-\theta_{k-2}(t)^T \theta_{k-1}(t) \theta_{k-2}(t)+\delta''(t)+\delta'''(t))^T\\
             &(M-\Tilde{M}(t))(\theta_{k-1}(t)-\theta_{k-2}(t)^T \theta_{k-1}(t) \theta_{k-2}(t)+\delta''(t)+\delta'''(t))\\
             &=\sqrt{n}(\theta_{k-1}(t)-\theta_{k-2}(t)^T \theta_{k-1}(t) \theta_{k-2}(t)+\delta''(t)+\delta'''(t))^T((1+\mu(t))\theta_{k}(t)+\theta_{k-2}(t)+\delta^{(4)}(t))\\
             &+(\theta_{k-1}(t)-\theta_{k-2}(t)^T \theta_{k-1}(t) \theta_{k-2}(t)+\delta''(t)+\delta'''(t))^T\\
             &(M(t)-\Tilde{M}(t))(\theta_{k-1}(t)-\theta_{k-2}(t)^T \theta_{k-1}(t) \theta_{k-2}(t)+\delta''(t)+\delta'''(t))\\
             &= \sqrt{n}(\theta_{k}(t)-\theta_{k-2}(t))^T\theta_{k-1}(t)+O(\log(n)^k/\sqrt{n}),
        \end{split}
    \end{equation*}
    where $|\mu(t)|=O(\frac{\log(n)^{2(k-1)}}{n})$, $\delta^{(4)}(t) \in \Theta_{k-1}(t)+\mathcal{E}_{k+1}(t)$, and $\|\delta^{(4)}(t)\|=O(\frac{\log(n)^k}{\sqrt{n}})$. Here we have used the fact that 
    \begin{equation*}
    \begin{split}
        &(\theta_{k-1}(t)-\theta_{k-2}(t)^T \theta_{k-1}(t) \theta_{k-2}(t)+\delta''(t)+\delta'''(t))^T(M(t)-\Tilde{M}(t))\\ 
        &(\theta_{k-1}(t)-\theta_{k-2}(t)^T \theta_{k-1}(t) \theta_{k-2}(t)+\delta''(t)+\delta'''(t))=O(\log(n)/\sqrt{n}),
    \end{split}
    \end{equation*} 
    which follows from an identical calculation in the proof of \eqref{vartheta = theta + epsilon}.
\end{proof}

Finally, as a corollary we can prove Proposition \ref{Quantitative Approximation Probability }. Namely, on $A_k$,
\begin{equation}\label{a_k approx}
    \begin{split}
        \sup_{t\in [0,T]}|a_{k+1}(t)-\sqrt{n}\theta(t)^T P_{2k-1}(\Tilde{M}(t)/\sqrt{n})\theta(t)| \leq C_k \frac{\log(n)}{\sqrt{n}},
    \end{split}
\end{equation}
and
\begin{equation}\label{b_k approx}
    \begin{split}
        \sup_{t\in [0,T]}|\frac{1}{\sqrt{n}}(b_{k+1}(t)^2-n)-\sqrt{n}\theta(t)^T P_{2k}(\Tilde{M}(t)/\sqrt{n})\theta(t)| \leq C_k \frac{\log(n)}{\sqrt{n}},
    \end{split}
\end{equation}
holds for $k \geq 1$. These are equivalent to Proposition~\ref{Quantitative Approximation Probability }.

\begin{proof}[Proof of Proposition~\ref{Quantitative Approximation Probability }]
    Recall the Chebyshev polynomial identity from Section~\ref{Chebyshev Polynomial Section},
    \begin{equation*}
        P_{j}(x)P_k(x) = \sum_{\ell \leq [(j+k)/2]} P_{j+k-\ell}(x).
    \end{equation*}
    As a direct corollary, $P_{k}(x)^2-P_{k-1}(x)^2=P_{2k}(x)$, and $(P_{k}(x)-P_{k-2}(x))P_{k-1}(x)=P_{2k-1}(x)$, and therefore,
    \begin{equation*}
        \begin{split}
            \frac{||\theta_k(t)||^2}{||\theta_{k-1}(t)||^2}-1 &=\frac{1}{||\theta_{k-1}(t)||^2}(||\theta_k(t)||^2-||\theta_{k-1}(t)||^2) \\
            &=(1+O(\log(n)/\sqrt{n}))\theta(t)^T \left(P_{k}(\Tilde{M}(t)/\sqrt{n})^2-P_{k-1}(\Tilde{M}(t)/\sqrt{n})^2\right)\theta(t)\\
            &=(1+O(\log(n)/\sqrt{n}))\theta(t)^T P_{2k}(\Tilde{M}(t)/\sqrt{n})\theta(t),
        \end{split}
    \end{equation*}
    and similarly
    \begin{equation*}
        \begin{split}
            (\theta_{k}(t)-\theta_{k-2}(t))^T\theta_{k-1}(t)&=\theta(t)^T(P_{k}(\Tilde{M}(t)/\sqrt{n})-P_{k-2}(\Tilde{M}(t)/\sqrt{n}))P_{k-1}(\Tilde{M}(t)/\sqrt{n})\theta(t)\\
            &=\theta(t)^T P_{2k-1}(\Tilde{M}(t)/\sqrt{n})\theta(t).
        \end{split}
    \end{equation*}
    The proof now follows from \eqref{a_k approx equation} and \eqref{b_k approx equation}.
\end{proof}

\section{Proof of Theorem \ref{Weak Convergence of Approximate Tridiagonal Entries}}\label{Moment Method Section}

\subsection{Orthogonality Computation}

Recall in Section~\ref{Chebyshev Polynomial Section}, we took $P_k(x)$ to be the orthonormal polynomials with respect to the semicircular measure $\frac{1}{2\pi}\mathbbm{1}_{|x|\leq 2} \sqrt{4-x^2} dx$, so it immediately follows from the Wigner semicircle law that $\frac{1}{n}\E[\mathrm{Tr}(P_j(\Tilde{M}(t)/\sqrt{n})P_k(\Tilde{M}(t)/\sqrt{n}))]\to \delta_{j,k}$ as $n \to \infty$. Using a more explicit combinatorial argument, we see this computation generalizes to two distinct times. This is the content of the following lemma.

\begin{lm}\label{Semicircular Cov}
We have
\begin{equation*}
    \frac{1}{n}\E[\mathrm{Tr}(P_j(\Tilde{M}(t_1)/\sqrt{n})P_k(\Tilde{M}(t_2)/\sqrt{n}))]=\delta_{j,k} e^{-k|t_1-t_2|}+O(1/n).
\end{equation*}
\end{lm}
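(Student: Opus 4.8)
The plan is to expand both Chebyshev polynomials via the explicit formula \eqref{P_k(x) explicit formula}, reducing the statement to computing mixed moments $\frac{1}{n}\E[\mathrm{Tr}((\Tilde M(t_1)/\sqrt n)^{a}(\Tilde M(t_2)/\sqrt n)^{b})]$, and then to apply the asymptotic freedom result, Theorem \ref{Semicircular system asymptotics}, with $k=2$, $M_1 = \Tilde M(t_1)$, $M_2 = \Tilde M(t_2)$. Here the covariance parameters from \eqref{GOE OU Covariances} are $\varphi_{1,1}=\varphi_{2,2}=1$ and $\varphi_{1,2}=\varphi_{2,1}=e^{-|t_1-t_2|}=:\rho$ (the ambient $(n+1)\times(n+1)$ versus $n\times n$ discrepancy and the zeroed first row/column only affect things at order $O(1/n)$, which I would note in passing). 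Thus Theorem \ref{Semicircular system asymptotics} gives
\begin{equation*}
    \frac{1}{n}\E[\mathrm{Tr}((\Tilde M(t_1)/\sqrt n)^{a}(\Tilde M(t_2)/\sqrt n)^{b})] = \sum_{\pi \in NC_2[a+b]} \rho^{\,r(\pi)} + O(1/n),
\end{equation*}
where $r(\pi)$ is the number of blocks of $\pi$ that pair an index from the first block of $a$ positions with one from the last $b$ positions ("crossing-the-boundary" pairs).

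The combinatorial heart of the argument is then to show that $\sum_{a,b}(\text{coeffs from }\eqref{P_k(x) explicit formula})\sum_{\pi\in NC_2}\rho^{r(\pi)}$ collapses to $\delta_{j,k}\rho^{k}$. I would organize this around the classical bijection between $NC_2[2m]$ and Dyck paths / the fact that $U_k$ is the orthogonal polynomial of the semicircle: contracting a non-crossing pairing of $P_j(x)P_k(x)$ where $x$ is a free semicircular is exactly the computation $\int_{-2}^2 P_j P_k \,\tfrac{1}{2\pi}\sqrt{4-x^2}\,dx = \delta_{j,k}$ when $\rho=1$. The novelty for $\rho \ne 1$ is bookkeeping the weight. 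The cleanest route I see: interpret $\frac{1}{n}\E[\mathrm{Tr}(P_j(X_1)P_k(X_2))]$, with $X_1,X_2$ the two semicirculars, as $\varphi(P_j(X_1)P_k(X_2))$ in the limiting trace $\varphi$; write $X_2 = \rho X_1 + \sqrt{1-\rho^2}\,Y$ with $Y$ a free semicircular free from $X_1$ (a standard realization of the covariance structure — I would justify that the joint moments match those predicted by Theorem \ref{Semicircular system asymptotics}); then use that for the semicircle the family $\{P_k\}$ satisfies the reproducing-type identity $\varphi(P_j(X_1) P_k(\rho X_1 + \sqrt{1-\rho^2}Y)) = \rho^{\min(j,k)}\,\varphi(P_j(X_1)P_k(X_1))\cdot[\text{something}]$ — more precisely, conditioning on $X_1$, the operator $\E[P_k(\rho X_1 + \sqrt{1-\rho^2}Y)\mid X_1]$ equals $\rho^k P_k(X_1)$ because $P_k$ is monic of degree $k$, the lower-degree corrections have expectation zero (odd in $Y$) or get absorbed, and orthogonality of $\{P_k\}$ under $\varphi$ then forces $\delta_{j,k}$. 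Finally $\rho^k = e^{-k|t_1-t_2|}$ gives the claim.

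The step I expect to be the main obstacle is making the "conditional expectation" / Gaussian-type projection argument rigorous purely combinatorially, i.e.\ proving $\sum_{\pi\in NC_2[j+k]}\rho^{r(\pi)} = \delta_{j,k}\rho^k$ directly from the non-crossing-pairing sum weighted by boundary-crossing count, without invoking operator-algebraic machinery the paper has chosen to avoid. Concretely, one must show: (i) if a non-crossing pairing of the word $x^j | x^k$ has \emph{any} pair internal to one side, one can cancel it against a companion pairing using the coefficient identities in $P_j(x)=\sum (-1)^{k'}\binom{j-k'}{k'}x^{j-2k'}$ — this is the same cancellation that proves $\int P_j P_k = \delta_{jk}$; (ii) the only surviving configurations when $j=k$ are the "all boundary-crossing" pairings, of which, after the $P_j, P_k$ expansion telescopes, exactly a net weight $\rho^k$ remains; (iii) when $j\neq k$ everything cancels. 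I would prove this by induction on $j+k$ using the three-term recurrence \eqref{Three term reccurence}: peeling off $P_{k}(x) = xP_{k-1}(x) - P_{k-2}(x)$, the $x$ multiplies into the trace and either pairs across the boundary (contributing a factor $\rho$ and reducing to the $(j,k-1)$ case shifted) or pairs within the $P_{k-1}$ side (reducing via the semicircle recursion), and matching these against the $-P_{k-2}$ term yields the recursion $c_{j,k} = \rho\, c_{j,k-1}\cdot(\text{from Catalan bookkeeping}) - c_{j,k-2}$ whose solution with the right initial data is $\delta_{j,k}\rho^k + O(1/n)$. The $O(1/n)$ error is uniform in the (finitely many, $k$-dependent) terms, so it survives the finite linear combination.
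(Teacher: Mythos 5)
Your setup is sound and matches the paper's starting point: expanding the Chebyshev polynomials via \eqref{P_k(x) explicit formula} and invoking Theorem~\ref{Semicircular system asymptotics} with $\varphi_{1,2}=e^{-|t_1-t_2|}=:\rho$ (the $\Tilde{M}$ versus $M$ and $n$ versus $n+1$ discrepancies being $O(1/n)$) correctly reduces Lemma~\ref{Semicircular Cov} to a purely combinatorial collapse of a weighted sum over non-crossing pairings to $\delta_{j,k}\rho^{k}$. But that collapse \emph{is} the content of the lemma, and neither of your two proposed justifications establishes it. The ``conditional expectation'' identity $\E[P_k(\rho X_1+\sqrt{1-\rho^2}\,Y)\mid X_1]=\rho^k P_k(X_1)$ (the free Mehler / free Ornstein--Uhlenbeck eigenfunction property of second-kind Chebyshev polynomials) is true, but it does not follow from ``$P_k$ is monic and odd-in-$Y$ terms vanish'': the even-in-$Y$ contractions produce genuine lower-degree corrections, and showing that they reassemble so that exactly $\rho^k P_k$ survives is precisely the cancellation you are trying to prove, restated in the operator-algebraic language the paper deliberately avoids. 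As written, this step is circular. Also, the displayed identity $\sum_{\pi\in NC_2[j+k]}\rho^{r(\pi)}=\delta_{j,k}\rho^{k}$ is false as literally stated (that sum is the mixed moment of $X_1^{j}X_2^{k}$, e.g.\ it equals $1$ for $j=0$, $k=2$); what you need is the statement \emph{after} the alternating-sign expansion of the polynomials, which is again the missing cancellation.

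The backup induction also does not close as described. If the letter peeled off via $P_k(x)=xP_{k-1}(x)-P_{k-2}(x)$ pairs across the boundary, the contraction consumes a letter of the $P_j(X_1)$ block (and in general splits it), so you land on $(j-1,k-1)$-type quantities, not on $c_{j,k-1}$; and if it pairs internally, non-crossingness splits the blocks into monomial segments, so the resulting expressions are mixed monomial/Chebyshev traces outside the family $\{c_{j',k'}\}$, forcing an induction over a strictly larger class of quantities. Indeed your proposed recursion $c_{j,k}=\rho\,c_{j,k-1}\cdot(\cdots)-c_{j,k-2}$ is incompatible with the claimed answer: if $c_{j,k}=\delta_{j,k}\rho^{k}$, then at $j=k$ the right-hand side vanishes while the left-hand side is $\rho^{k}$. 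The paper closes this gap by a cheaper route: by monicity it suffices to pair a monomial $x^{j}$ against $P_k$, and then the coefficients $(-1)^{k'}\binom{k-k'}{k'}$ in \eqref{P_k(x) explicit formula} are matched, via inclusion--exclusion, with the number of ways of inserting $k'$ disjoint adjacent pairs inside the $V_2$ block (any pairing internal to $V_2$ must contain an adjacent internal pair), leaving only the all-crossing pairing, which exists only when $j=k$ and carries weight $e^{-k|t_1-t_2|}$. Your write-up needs either this argument or a rigorous combinatorial proof of the free Mehler identity; at present the key cancellation is asserted rather than proved.
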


In what follows we will use the definitions and theorem from Section~\ref{Non-crossing Partitions}. To begin with, let $j+k=2m$ and $\Tilde{M}(t)$ be defined as in Section~\ref{Approximate thetas}. We can compute directly from Theorem~\ref{Semicircular system asymptotics} that,
\begin{equation*}
    \begin{split}
        &\E\left[\frac{1}{n}\mathrm{Tr}((\Tilde{M}(t_1)/\sqrt{n})^j(\Tilde{M}(t_2)/\sqrt{n})^k)\right]\\
        &=\sum_{\pi \in NC[2m]} \prod_{(i_1,i_2) \in \pi} (\delta_{\{i_1,i_2 \leq j\} \cup \{i_1,i_2 > j\}}+\delta_{\{i_1 \leq j < i_2\} \cup \{i_2 \leq j < i_1\}} e^{-|t_1-t_2|})+O(1/n)\\
        &= \sum_{\ell=0}^{j \wedge k} C_{n,j,\ell}e^{-\ell |t_1-t_2|}+O(1/n),
    \end{split}
\end{equation*}
where $C_{m,j,\ell}$ is the number of $\pi\in NC_2[2m]$ with $\pi$ having exactly $\ell$ pairs with one element from $\{1, \dots, j\}$ and one from $\{j+1, \dots, 2m\}$. For convenience we will also define
\begin{equation}
    \kappa^{j,k}_{\pi}:=\prod_{(i_1,i_2) \in \pi} (\delta_{\{i_1,i_2 \leq j\} \cup \{i_1,i_2 > j\}}+\delta_{\{i_1 \leq j < i_2\} \cup \{i_2 \leq j < i_1\}} e^{-|t_1-t_2|}),
\end{equation}
for any $\pi \in NC_2[j+k]$. Now we are ready to prove Lemma \ref{Semicircular Cov}.

\begin{proof}\textbf{(Proof of Lemma~\ref{Semicircular Cov}})
Since the $P_j$ are monic polynomials, it suffices to prove that for $j\leq k$, 
\begin{equation*}
    \frac{1}{n}\E[\mathrm{Tr}((\Tilde{M}(t_1)/\sqrt{n})^j P_k(\Tilde{M}(t_2)/\sqrt{n}))]=\delta_{j,k} e^{-k|t_1-t_2|}+O(1/n),
\end{equation*}
and for $k\leq j$, 
\begin{equation*}
\frac{1}{n}\E[\mathrm{Tr}(P_j(\Tilde{M}(t_1)/\sqrt{n}) (\Tilde{M}(t_2)/\sqrt{n})^k)]=\delta_{j,k} e^{-k|t_1-t_2|}+O(1/n).    
\end{equation*}
We will just prove the first equation since the second equation can be proved identically. 

Divide $V=\{1,\dots,j+k\}$ into $V_1=\{1,\dots,j\}$ and $V_2=\{j+1,\dots,j+k\}$. There is exactly one non-crossing pair partition which pairs every element of $V_1$ to an element in $V_2$ if $j=k$, and $0$ such partitions otherwise. Also observe for any $\pi \in NC_2[V]$, if two numbers in $V_2$ get paired together, there must be at least one adjacent pair of numbers in $V_2$ paired. Thus we see that
\begin{equation*}
    \begin{split}
        \frac{1}{n}&\E[\mathrm{Tr}((\Tilde{M}(t_1)/\sqrt{n})^j P_k(\Tilde{M}(t_2)/\sqrt{n}))]\\
        &= \sum_{k'=0}^{[k/2]}(-1)^{k'} \binom{k-k'}{k'} \frac{1}{n}\E\left[\mathrm{Tr}((M(t_1)/\sqrt{n})^j (M(t_2)/\sqrt{n})^{k-2k'})\right]\\
        &=\sum_{k'=0}^{[k/2]}(-1)^{k'} \binom{k-k'}{k'} \sum_{\pi \in NC_2[j+k-2k']} \kappa^{j,k-2k'}_{\pi}+O(1/n)\\
        &= \sum_{k'=0}^{[k/2]}(-1)^{k'} \sum_{{\{i'_1,i'_1+1\},\dots,\{i'_{k'} i'_{k'}+1\} \in V_2}} \sum_{\pi' \in NC_2[j+k]: \{i'_1,i'_1+1\},\dots,\{i'_{k'},i'_{k'}+1\} \in \pi} \kappa_{\pi'}^{j,k}+O(1/n)\\
        &= \sum_{\pi \in NC_2[j+k], \text{ all pairs containing one element in } V_1 \text{ and one in } V_2} \kappa_{\pi}^{j,k}+O(1/n)\\
        &=\delta_{j,k} e^{-k|t_1-t_2|}+O(1/n)
    \end{split}
\end{equation*}
In the calculations above we used the explicit formula for $P_k(x)$, \eqref{P_k(x) explicit formula} and the observation that there are exactly $\binom{k-k'}{k'}$ choices for inserting positions of the $k'$ adjacent pairs in $V_2$ described by the second sum. The second to last equality follows from the inclusion--exclusion principle, and finally the last equality follows from the earlier observation that the last sum only contains one partition if $j=k$ and none otherwise. This completes the proof.    
\end{proof}

\subsection{Analysis of Moments and Tightness}

The main result of this section is the convergence of moments for $\Tilde{E}_n(t) \to E(t)$.

\begin{prop}\label{Convergence of Moments}
For any fixed $j_1,j_2$, $k_1,\dots,k_{j_1}$, and $k'_1,\dots,k_{j_2}'$ we have
\begin{equation*}
    \begin{split}
        \E&\left[\Tilde{a}_{k_1}(t_1)\cdots \Tilde{a}_{k_{j_1}}(t_{j_1})\frac{1}{\sqrt{n}}\left(\Tilde{b}_{k'_1}(t_1')^2-n\right)\cdots\frac{1}{\sqrt{n}}\left(\Tilde{b}_{k'_{j_2}}(t_{j_2}')^2-n\right)\right]\\
        &=\E\left[A_{k_1}(t)\cdots A_{k_{j_1}}(t)B_{k_1}(t)\cdots B_{k_{j_2}}(t)\right]+O(1/n)\\
        & =\bigg(2^{j_1/2}\sum_{\pi \in \mathcal{P}_2(j_1)} \prod_{(i_1,i_2) \in \pi}e^{-(2k_{i_1}-1)|t_{i_1}-t_{i_2}|}\delta_{k_{i_1},k_{i_2}} \bigg)\\
        & \hspace{4cm}\times\bigg(2^{j_2/2}\sum_{\pi \in \mathcal{P}_2(j_2)} \prod_{(i_1,i_2) \in \pi}e^{-2k'_{i_1}|t_{i_1}'-t_{i_2}'|}\delta_{k'_{i_1},k'_{i_2}} \bigg)+O(1/\sqrt{n}).
    \end{split}
\end{equation*}
\end{prop}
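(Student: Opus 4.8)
The strategy is to condition on the GOE process $\tilde M$, write every factor as a quadratic form in the Gaussian vectors $\theta(\cdot)$, apply Wick's formula in the $\theta$ variables, and then evaluate the resulting sum using the free‑probabilistic inputs of Sections~\ref{Non-crossing Partitions}--\ref{Factorization Section}. Recall from Definition~\ref{Def of approximate Tridiagonal Entries} that (for indices $\ge 2$) $\tilde a_{k}(t)=\sqrt n\,\theta(t)^{T}P_{\,2k-3}(\tilde M(t)/\sqrt n)\theta(t)$ and $\tfrac1{\sqrt n}(\tilde b_{k}(t)^{2}-n)=\sqrt n\,\theta(t)^{T}P_{\,2k-2}(\tilde M(t)/\sqrt n)\theta(t)$, the first with odd Chebyshev index, the second with even Chebyshev index; the remaining factor $\tilde a_1(t)=e_1^{T}M(t)e_1$ is a single matrix entry, independent of $\tilde M$ and of $\theta=\vartheta_0$, with covariance $2e^{-|t-s|}$, so any such factors split off multiplicatively at the outset and contribute exactly as a Gaussian would in the ``$2k-1=1$'' case. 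Thus we must evaluate $n^{L/2}\,\E\big[\prod_{i=1}^{L}\theta(s_i)^{T}Q_i\theta(s_i)\big]$, where $L=j_1+j_2$, each $Q_i=P_{m_i}(\tilde M(s_i)/\sqrt n)$ with $m_i$ odd for the $\tilde a$‑factors and even for the $\tilde b$‑factors, and $s_i$ is the corresponding time. Since $\theta$ is a jointly Gaussian vector process independent of $\tilde M$ with $\E[\theta(s)_a\theta(s')_b]=\delta_{a,b}\,\tfrac1n e^{-|s-s'|}$, conditioning on $\tilde M$ and applying Wick's formula to the $2L$ Gaussian scalars $\{\theta(s_i)_{a_i},\theta(s_i)_{b_i}\}$ yields
\[
\E\Big[\prod_i\theta(s_i)^{T}Q_i\theta(s_i)\Big]=\sum_{\pi\in\mathcal P_2(\{1,\dots,L\}\times\{1,2\})}\Big(\prod_{\text{blocks }\{(i,\cdot),(i',\cdot)\}\text{ of }\pi}\tfrac1n e^{-|s_i-s_{i'}|}\Big)\,\E_{\tilde M}\Big[\sum_{\vec a,\vec b\ \text{consistent with }\pi}\ \prod_i(Q_i)_{a_ib_i}\Big].
\]

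Next I reduce the inner spatial sum to a product of traces. Viewing the two slots of factor $i$ as the endpoints of an edge and the blocks of $\pi$ as vertices, each $\pi$ determines a $2$‑regular multigraph on $L$ edges, hence a disjoint union of cycles $C_1(\pi),\dots,C_{r(\pi)}(\pi)$; summing each free spatial index over $[n]$ collapses each cycle into one trace, so that $\sum_{\vec a,\vec b\ \text{consistent}}\prod_i(Q_i)_{a_ib_i}=\prod_{a=1}^{r(\pi)}\mathrm{Tr}\big(\prod_{i\in C_a}Q_i\big)$ up to an $O(1/n)$ error absorbing the facts that $\theta$ has vanishing first entry and $\tilde M$ is the $n\times n$ corner of an $(n+1)\times(n+1)$ matrix. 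Hence
\[
\E\Big[\prod_i\sqrt n\,\theta(s_i)^{T}Q_i\theta(s_i)\Big]=n^{L/2}\sum_{\pi}\Big(\prod_{\text{blocks}}\tfrac1n e^{-|s_i-s_{i'}|}\Big)\,\E_{\tilde M}\Big[\prod_{a=1}^{r(\pi)}\mathrm{Tr}\Big(\prod_{i\in C_a}Q_i\Big)\Big]+O(1/n).
\]

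Now I extract the leading order. The product of $\tfrac1n e^{-|s_i-s_{i'}|}$ over the $L$ blocks has modulus $\le n^{-L}$, so each term is $\le n^{-L/2}\,\big|\E_{\tilde M}[\prod_a\mathrm{Tr}(\cdots)]\big|$. Using Lemma~\ref{Factorization Lemma} to split $\E_{\tilde M}[\prod_a\mathrm{Tr}(X_a)]$ into $\prod_a\E[\mathrm{Tr}(X_a)]$ plus expectations of products of \emph{centered} trace statistics — the latter being $O(1)$ by Theorem~\ref{Moment Central Limit Theorem}, which is needed because the crude bound in Lemma~\ref{Factorization Lemma} is not sharp enough here — together with the genus expansion (Theorem~\ref{Semicircular system asymptotics}), which gives $\E[\mathrm{Tr}(\prod_{i\in C_a}Q_i)]=O(n)$ when $|C_a|\ge 2$ and $=O(1)$ when $|C_a|=1$ (orthogonality of $P_m$ to constants for $m\ge 1$), one finds each term is $O\big(n^{\#\{a:\,|C_a|\ge 2\}-L/2}\big)$. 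Since $L=\sum_a|C_a|\ge 2\cdot\#\{a:\,|C_a|\ge 2\}$ with equality only when $\pi$ has no self‑loops and every cycle has length exactly $2$, the terms of size $\Theta(1)$ are precisely those $\pi$ whose cycle graph is a disjoint union of $L/2$ two‑cycles, and all other $\pi$ contribute $O(1/n)$.

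Finally, such a $\pi$ is the same data as a perfect matching $\mathcal M$ of $\{1,\dots,L\}$, each pair carrying two admissible orientations (the source of the global factor $2^{L/2}=2^{j_1/2}2^{j_2/2}$), and by Lemma~\ref{Semicircular Cov} its contribution is $2^{L/2}\prod_{\{i,i'\}\in\mathcal M}\big(\tfrac1{n^2}e^{-2|s_i-s_{i'}|}\big)\E[\mathrm{Tr}(Q_iQ_{i'})]=2^{L/2}\prod_{\{i,i'\}\in\mathcal M}\delta_{m_i,m_{i'}}e^{-(m_i+2)|s_i-s_{i'}|}+O(1/n)$. Because the $\tilde a$‑factors carry odd degrees and the $\tilde b$‑factors even ones, $\delta_{m_i,m_{i'}}$ forces $\mathcal M$ to match $a$'s with $a$'s and $b$'s with $b$'s, so $\sum_{\mathcal M}$ factors as $\big(\sum_{\pi\in\mathcal P_2(j_1)}\cdots\big)\big(\sum_{\pi\in\mathcal P_2(j_2)}\cdots\big)$; substituting the degrees of Definition~\ref{Def of approximate Tridiagonal Entries} (arranged so that $m_i+2=2k_i-1$ and $\delta_{m_i,m_{i'}}=\delta_{k_i,k_{i'}}$ on the $a$‑side, and $m_i+2=2k'_i$, $\delta_{m_i,m_{i'}}=\delta_{k'_i,k'_{i'}}$ on the $b$‑side) recovers exactly the claimed product of pairing sums, which in turn equals $\E[A_{k_1}(t_1)\cdots A_{k_{j_1}}(t_{j_1})B_{k'_1}(t'_1)\cdots B_{k'_{j_2}}(t'_{j_2})]$ by Isserlis' theorem for the block‑independent Gaussian family with the covariances of Theorem~\ref{Weak Convergence of Approximate Tridiagonal Entries}.

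The main obstacle is the combinatorial–analytic bookkeeping of the two middle steps: translating the $\theta$‑Wick expansion into the cycle/trace picture cleanly (including the $\tilde a_1$ and embedding corrections), and then controlling \emph{all} non‑leading pairings uniformly — the delicate point being that Lemma~\ref{Factorization Lemma} must be combined with the central limit theorem of Theorem~\ref{Moment Central Limit Theorem} so that centered trace statistics are seen to be $O(1)$ rather than growing, which is what ultimately forces the error down to the claimed order.
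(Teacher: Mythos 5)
Your proposal is correct and follows essentially the same route as the paper's proof: conditioning on $\Tilde{M}$, applying Wick's formula in $\theta$, collapsing the resulting pairings into cycles of traces (your $2$-regular multigraph is exactly the content of Lemmas~\ref{Perm Expansion} and~\ref{Multiplicity of Perm}), and then using Lemma~\ref{Semicircular Cov} together with the factorization property of Lemma~\ref{Factorization Lemma} to show that only same-block two-cycles survive, each with the orientation factor $2$. The only discrepancies are cosmetic and no worse than the paper's own level of precision: your displayed ``contribution'' of a matching omits the $n^{L/2}$ prefactor that is clearly intended from context, and the blanket ``$O(1/n)$'' for non-matching pairings should read $O(1/\sqrt{n})$ when fixed points or odd cycle lengths occur, which is all the stated error bound requires.
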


A standard argument using the Weierstrass approximation theorem and uniform boundedness of moments implies weak convergence as an immediate corollary of Proposition~\ref{Convergence of Moments}.
\begin{cor}\label{Finite Dimensional Distributions}
For any fixed $j \in \mathbb{N}$ and times $t_1,\dots ,t_j \in [0,T]$, we have
\begin{equation*}
    \begin{split}
        (\Tilde{E}_n(t_1),\dots,\Tilde{E}_n(t_j)) \to (E(t_1),\dots ,E(t_j)),
    \end{split}
\end{equation*}
in distribution.
\end{cor}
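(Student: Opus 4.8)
The plan is to deduce this corollary from Proposition~\ref{Convergence of Moments} by the classical method of moments. Since each $A_j$ and each $B_j$ is a centered Gaussian (Ornstein--Uhlenbeck) process and the $2k$ processes are jointly independent, $(E(t_1),\dots,E(t_j))$ is a centered Gaussian vector in $\R^{2kj}$, whose law is therefore moment-determined, the multivariate Gaussian moment problem being determinate (Carleman's condition holds for every one-dimensional projection). It thus suffices to verify (i) that every mixed moment of the coordinates of $(\Tilde{E}_n(t_1),\dots,\Tilde{E}_n(t_j))$ converges, as $n\to\infty$, to the corresponding moment of $(E(t_1),\dots,E(t_j))$, and (ii) a uniform bound $\sup_n\E\big[\|\Tilde{E}_n(t_i)\|^{2m}\big]<\infty$ for every $i$ and every $m\in\mathbb N$.

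Both facts are consequences of Proposition~\ref{Convergence of Moments}. For (i): a monomial in the $2kj$ coordinates $\{\Tilde{a}_\ell(t_i)\}_{\ell\le k,\,i\le j}$, $\{\tfrac1{\sqrt n}(\Tilde{b}_\ell(t_i)^2-n)\}_{\ell\le k,\,i\le j}$ is precisely an expression of the type whose expectation is computed in Proposition~\ref{Convergence of Moments} (with the times drawn from $t_1,\dots,t_j$ and the indices from $1,\dots,k$, repetitions allowed), and the limiting value displayed there is, by Wick's formula together with the independence of the $A_\ell$'s from the $B_\ell$'s and the covariances $2e^{-(2\ell-1)|t-s|}$, $2e^{-2\ell|t-s|}$, exactly the matching Gaussian moment of $(E(t_1),\dots,E(t_j))$. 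For (ii): Proposition~\ref{Convergence of Moments} applied with $2m$ factors, all at the time $t_i$ and with a single index, gives a convergent and hence bounded sequence, and $\E\big[\|\Tilde{E}_n(t_i)\|^{2m}\big]$ is controlled by a finite sum of such moments. (Alternatively, after the indicated normalization each coordinate of $\Tilde{E}_n(t)$ is a fixed-degree polynomial functional of the Gaussian data $(\Tilde{M}(t),\theta(t))$, so Gaussian hypercontractivity bounds all of its moments uniformly in $n$.)

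With (i) and (ii) in place the conclusion follows from the usual closing argument of the moment method. By (ii) and Markov's inequality the laws of $(\Tilde{E}_n(t_1),\dots,\Tilde{E}_n(t_j))$ are tight; along any subsequence extract a further weakly convergent subsequence with limit $\mu$; using (ii) with exponent $2m+2$ the monomials of degree $\le 2m$ are uniformly integrable along it, so their integrals pass to the limit and by (i) $\mu$ has the same moments as $(E(t_1),\dots,E(t_j))$; moment-determinacy forces $\mu=\mathrm{Law}(E(t_1),\dots,E(t_j))$, and as the subsequence was arbitrary the whole sequence converges in distribution. (In the alternative phrasing this is where the Weierstrass approximation theorem is used: choose, via (ii), a box $[-R,R]^{2kj}$ carrying all but $\varepsilon$ of the mass of each $\Tilde{E}_n(t_i)$, approximate a given bounded continuous test function uniformly on that box by a polynomial, apply (i), and estimate the complement using (ii) once more.)

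There is no essential difficulty in the corollary itself; the weight of this section lies entirely in Proposition~\ref{Convergence of Moments}, which I am treating as given. Its proof is the combinatorial core: one expands the joint moment by Wick's formula into a sum over pair partitions $\pi\in\mathcal P_2(\{1,2\}\times J)$ for an appropriate index set $J$, reorganizes that into a sum over permutations in $\mathcal S(J)$, and then isolates the leading order by combining the two-time orthogonality identity of Lemma~\ref{Semicircular Cov} with the factorization estimate of Lemma~\ref{Factorization Lemma}. The subtle point on which the scaling depends is that the explicit centering in $\tfrac1{\sqrt n}(\Tilde{b}_\ell(t)^2-n)$ exactly cancels the non-fluctuating $O(n)$ part, leaving a $\Theta(1)$ fluctuation, so that the surviving contributions are the ones whose partitions match every copy in $\{1\}\times J$ with one in $\{2\}\times J$.
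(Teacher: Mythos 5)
Your argument is the standard method-of-moments closing that the paper itself invokes (it dismisses the corollary in one sentence citing Weierstrass approximation and uniform boundedness of moments, with all the weight carried by Proposition~\ref{Convergence of Moments}), and your fleshed-out version --- moment convergence from the proposition, uniform moment bounds, tightness, uniform integrability, and Gaussian moment-determinacy --- is correct and essentially the same approach. No gap to report.
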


To prove Proposition~\ref{Convergence of Moments}, we will need the following algorithm for turning certain pairings into permutations. We will denote by $\mathcal{S}(J)$ the set of permutations of the elements in the set $J$. Given an index set $J \subset \mathbb{N}$, the algorithm below defines a function $\mathrm{Perm}:\mathcal{P}_2(\{1,2\} \times J) \to  \mathcal{S}(J)$.    
\begin{alg}\,
    \begin{enumerate}
        \item Set $\mathcal{J}:= \{1,2\} \times  J $, and $i,i'=0$.
        \item Let $j_i^{(i')} := \min \{j \in J: (1,j) \in \mathcal{J}  \}$ and $a_i^{(i')}=1$. 
        \item Let $\tau$ be the function which flips the first coordinate. Define $(a_i^{(i'+1)}j_i^{(i'+1)}):=\tau \circ \pi (a_i^{(i')}j_i^{(i')})$. Moreover remove $(a_i^{(i'+1)}j_i^{(i'+1)})$ and $\pi(a_i^{(i'+1)}j_i^{(i'+1)})$ from $\mathcal{J}$.
        \item If $j_i^{(i'+1)}=j_i^{(0)}$ define the cycle $\sigma_i = (j_i^{(0)}\cdots j_i^{(i')})$. Moreover, if $\mathcal{J}=\emptyset$ move to the next step, otherwise if $\mathcal{J}\neq \emptyset$ set $i = i+1$, $i'=0$ and go back to step 2. Else, if $j_i^{(i'+1)}\neq j_i^{(0)}$ let $i' =i'+1$ and go back to step 2.
        \item Set $\sigma = \mathrm{Perm}(\pi):= \sigma_1\cdots\sigma_i$
    \end{enumerate}
\end{alg}

We provide a helpful example. 
\begin{ex}
    Let $J=\{1,2,3,4\}$ and consider the two pairings
    \begin{equation*}
        \begin{split}
            \pi = \{\{(1,1),(1,3)\},\{(2,3),(2,2)\}, \{(1,2),(1,4)\},\{(2,4),(2,1)\}\}\\
            \pi' = \{\{(1,1),(1,2)\},\{(2,2),(2,1)\}, \{(1,3),(1,4)\},\{(2,4),(2,3)\}\}.
        \end{split}
    \end{equation*}
    Then, in cycle notation $\mathrm{Perm}(\pi) = (1 3 2 4)$ and $\mathrm{Perm}(\pi') = (1 2 ) (3 4)$. 
\end{ex}

The purpose of the above function $\mathrm{Perm}$ is to introduce the following Lemma. 
\begin{lm}\label{Perm Expansion}
    For deterministic matrices $A_1,\dots ,A_j$, we have
    \begin{equation*}
        \E\left[\prod_{i=1}^{j}\theta(t_i)^T A_i \theta(t_i)\right] = \sum_{\pi \in \mathcal{P}_2(\{1,2\} \times [j])} \left(\prod_{i \in [j]} \frac{e^{-|t_i-t_{\sigma(i)}|}}{n}\right)\prod_{i=1}^{l}\mathrm{Tr}\left(A_{\sigma_i^{(1)}}\cdots A_{\sigma_i^{(|\sigma_i|)}}\right),
    \end{equation*}
    where $\sigma = \sigma_1\cdots \sigma_l = \mathrm{Perm}(\pi)$ in the sum above, and each cycle in the disjoint cycle decomposition can be written as $\sigma_i = (\sigma_i^{(0)}\cdots \sigma_i^{(|\sigma_i|)})$.
\end{lm}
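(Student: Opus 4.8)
The plan is to expand the left-hand side using Wick's formula (Isserlis' theorem) and then recognize that the combinatorial bookkeeping of the resulting sum over pairings is exactly encoded by the map $\mathrm{Perm}$. First I would write each quadratic form as $\theta(t_i)^T A_i \theta(t_i) = \sum_{p,q} \theta_p(t_i) (A_i)_{p,q} \theta_q(t_i)$, so that the product over $i \in [j]$ becomes a sum over multi-indices of a product of $2j$ Gaussian entries of the $\theta$ vectors, times the corresponding product of matrix entries. Here I use that, restricted to its nonzero components, $\theta(t)$ is a vector of i.i.d.\ copies of $\mathrm{OU}(1)$ (with $\sigma = \sqrt{2}$, hence unit variance at each fixed time), so that $\E[\theta_p(t)\theta_q(s)] = \delta_{p,q} e^{-|t-s|}$ and distinct components are independent. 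Wick's formula then expresses the expectation of the product of $2j$ Gaussians as a sum over pair partitions of the $2j$ ``slots,'' which we label by $\{1,2\} \times [j]$: slot $(1,i)$ is the ``left'' factor $\theta_{p_i}(t_i)$ in the $i$th quadratic form, and slot $(2,i)$ is the ``right'' factor $\theta_{q_i}(t_i)$.

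Next I would carefully trace through what a single pairing $\pi \in \mathcal{P}_2(\{1,2\}\times[j])$ contributes. Pairing two slots forces the corresponding component indices to be equal and produces a factor $e^{-|t_a - t_b|}$ where $a,b$ are the second coordinates of the paired slots. The key point is then to sum over the free component indices $p_1,q_1,\dots,p_j,q_j$ subject to the equality constraints imposed by $\pi$, weighted by $\prod_i (A_i)_{p_i,q_i}$. This is a standard ``matrix index contraction'' computation: following the constraints, the index $q_i$ of the right slot $(2,i)$ is identified with the index $p_{i'}$ of some left slot $(1,i')$, namely where $(2,i)$ is sent under $\tau \circ \pi$; iterating, the contractions organize the indices into cycles, and summing a product $(A_{i_1})_{\cdot,\cdot}(A_{i_2})_{\cdot,\cdot}\cdots$ around a cycle yields precisely $\mathrm{Tr}(A_{i_1} A_{i_2}\cdots)$. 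The map $\mathrm{Perm}$ in the preceding algorithm is designed so that $\sigma = \mathrm{Perm}(\pi)$ records exactly this cycle structure: starting from the smallest available left index and alternately applying $\pi$ and the coordinate-flip $\tau$ is the same as walking along the chain of forced index identifications. One also checks that the total number of component indices that remain free equals the number of cycles $l$, each contributing a free sum of length $n$, which cancels against the $n^{-j}$ normalization to leave $\prod_i n^{-1} e^{-|t_i - t_{\sigma(i)}|}$ after accounting for the exponential factors (each $e^{-|t_a-t_b|}$ from a Wick pair, reindexed via $\sigma$, gives $\prod_{i \in [j]} e^{-|t_i - t_{\sigma(i)}|}$). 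Assembling these pieces over all $\pi$ gives the claimed formula.

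The main obstacle is the bookkeeping in the middle step: showing rigorously that the index-contraction graph determined by an arbitrary pairing $\pi$ has connected components that are cycles, that these cycles coincide with the disjoint cycle decomposition of $\mathrm{Perm}(\pi)$, and that summing the matrix entries around such a component produces the stated trace (with the matrices appearing in the order dictated by the cycle, matching $\mathrm{Tr}(A_{\sigma_i^{(0)}}\cdots A_{\sigma_i^{(|\sigma_i|)}})$ up to cyclic invariance of the trace). I would handle this by induction on $j$, or equivalently by directly interpreting the algorithm: each pass of steps 2--4 of the algorithm traces out one connected component of the contraction graph, terminating precisely when it returns to its starting left-index, which is the defining property of a cycle. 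Once this identification is in place, the exponential factors and the power of $n$ are routine to track, and the $\delta_{p,q}$ constraints guarantee no extra freedom beyond the cycles, completing the proof.
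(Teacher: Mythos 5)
Your overall route --- expand each quadratic form into components, apply Wick's formula to the $2j$ Gaussian slots labelled by $\{1,2\}\times[j]$, and identify the index-contraction cycles with the cycles of $\mathrm{Perm}(\pi)$ --- is exactly the paper's proof, which is a one-line appeal to Wick's theorem plus index tracking. However, your normalization of $\theta$ is wrong, and the step you use to repair the powers of $n$ does not work as written. In the paper $\theta(t)=\vartheta_0(t)=x_0(t)/\sqrt{n}$, so each nonzero component is an $\mathrm{OU}(1)$ process scaled by $n^{-1/2}$: the covariance is $\E[\theta_p(t)\theta_q(s)]=\frac{1}{n}e^{-|t-s|}\delta_{p,q}$, not $e^{-|t-s|}\delta_{p,q}$ as you assert. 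With your unit-variance convention the Wick expansion produces $\sum_{\pi}\prod_{i}e^{-|t_i-t_{\sigma(i)}|}\prod_{i}\mathrm{Tr}(\cdots)$, which differs from the claimed identity by the factor $n^{-j}$, and your attempted repair --- that the $l$ free index sums of length $n$ ``cancel against the $n^{-j}$ normalization'' --- is incoherent: under your convention there is no $n^{-j}$ anywhere on the left-hand side, the free index sums are precisely what constitute the traces (they are not a separate factor $n^{l}$ available to be cancelled), and even formally cancelling $n^{l}$ against $n^{-j}$ would leave $n^{l-j}$, not the stated $\prod_{i} n^{-1}$. The fix is simply to use the correct covariance: each of the $j$ Wick pairs carries its own factor $n^{-1}e^{-|t_a-t_b|}$, these reassemble (via $\sigma=\mathrm{Perm}(\pi)$) into the prefactor $\prod_{i\in[j]}\frac{1}{n}e^{-|t_i-t_{\sigma(i)}|}$, and the constrained index sums then close up into the traces with no cancellation at all. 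Since the entire content of this lemma beyond Wick's theorem is exactly this bookkeeping, the error needs to be corrected rather than waved through.

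A smaller point: when a Wick pair joins two ``left'' slots (or two ``right'' slots), following the forced index identifications produces transposed factors, so for general non-symmetric $A_i$ the cycle sums give traces of products of $A_i$ and $A_i^T$; for instance with $j=2$ the three pairings give $\frac{1}{n^2}\bigl(\mathrm{Tr}(A_1)\mathrm{Tr}(A_2)+e^{-2|t_1-t_2|}\mathrm{Tr}(A_1A_2^T)+e^{-2|t_1-t_2|}\mathrm{Tr}(A_1A_2)\bigr)$. This matches the stated formula only because in the application $A_i=P_{\ell}(\Tilde{M}(t_i)/\sqrt{n})$ is symmetric; the paper glosses over this as well, but your claim that ``summing a product around a cycle yields precisely the trace'' should either acknowledge the transposes or assume the $A_i$ symmetric.
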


\begin{proof}
    This follows by carefully tracking indices after applying Wick's theorem with the covariances, $\E[(\theta(t))_i(\theta(s))_j] = \frac{e^{-|t-s|}}{n} \delta_{i,j}$.
\end{proof}

Next we would like to precisely count terms which are repeated. 
\begin{lm}\label{Multiplicity of Perm}
    For $J \subset \mathbb{N}$, and each permutation $\sigma \in \mathcal{S}(J)$ with disjoint cycle decomposition $\sigma = \sigma_1\cdots\sigma_l$, there are exactly $2^{\sum_{i=1}^{l}(|\sigma_i|-1)}$ pairings $\pi \in \mathcal{P}_2(\{0,1\} \times J)$ such that $\mathrm{Perm}(\pi) = \sigma$.
\end{lm}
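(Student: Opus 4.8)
The plan is to reverse-engineer the algorithm defining $\mathrm{Perm}$: rather than starting from a pairing $\pi$ and producing a permutation $\sigma$, I would start from $\sigma$ and count how many pairings $\pi \in \mathcal{P}_2(\{1,2\}\times J)$ can be fed into the algorithm so that it outputs $\sigma$. First I would observe that, because the algorithm processes the cycles of $\sigma$ one at a time and the cycle structure of the output is exactly the partition of $J$ into the orbits traced out by the algorithm, it suffices to handle a single cycle: if I can show that a cycle $\sigma_i = (\sigma_i^{(0)} \cdots \sigma_i^{(m)})$ of length $|\sigma_i| = m+1$ arises from exactly $2^{m}$ choices of the pairs of $\pi$ ``touching'' the coordinates $\{1,2\}\times\{\sigma_i^{(0)},\dots,\sigma_i^{(m)}\}$, then multiplying over the $l$ cycles gives $\prod_{i=1}^l 2^{|\sigma_i|-1} = 2^{\sum_i(|\sigma_i|-1)}$, which is the claim. (One should note the harmless typo $\{0,1\}$ vs $\{1,2\}$ in the statement; the index set is $\{1,2\}\times J$ as used everywhere else.)

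Next I would unwind the per-cycle count. Trace the algorithm through one cycle starting at $j = j_i^{(0)} = \min\{j: (1,j)\in\mathcal{J}\}$. The algorithm alternates: it is currently sitting at some coordinate $(a,j')$, it applies $\tau$ to flip to $(\bar a, j')$, then it looks at $\pi(\bar a, j')$ — and it is precisely the pairing $\pi$ that decides which coordinate $(a'',j'')$ this is, with $j''$ becoming the next cycle element. So the cycle $(\sigma_i^{(0)}\cdots\sigma_i^{(m)})$ records the sequence of second coordinates $j_i^{(0)}, j_i^{(1)}, \dots, j_i^{(m)}$ visited, and the constraint ``$\mathrm{Perm}(\pi)$ contains this cycle'' forces, for each step $r = 0,\dots,m$, that $\pi$ pairs the flipped coordinate $(\bar a_i^{(r)}, j_i^{(r)})$ with one of the \emph{two} coordinates $(1, j_i^{(r+1)})$ or $(2, j_i^{(r+1)})$ lying above $j_i^{(r+1)}$ (indices mod $m+1$). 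The starting coordinate is pinned: step $0$ begins at $a_i^{(0)} = 1$, i.e. at $(1, j_i^{(0)})$, which removes one factor of $2$. That leaves a free binary choice at each of the remaining $m$ steps — but one must check these choices are genuinely free and consistent, i.e. that any of the $2^m$ selections yields a valid pair partition (each of the $2(m+1)$ coordinates over this orbit used exactly once) and that the algorithm, run on it, really closes up after exactly these $m+1$ elements and does not merge with another orbit. This consistency check is the crux.

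I expect the main obstacle to be exactly that bookkeeping: showing the $2^m$ local choices assemble into a legitimate fixed-point-free pairing of the $2(m+1)$ coordinates over the orbit, and that the algorithm's ``closing'' condition $j_i^{(i'+1)} = j_i^{(0)}$ triggers at the right moment for every such choice. The clean way to organize this is to think of each orbit's data as a $2$-regular structure: over the $m+1$ symbols of the cycle, place the $m+1$ ``outgoing flipped'' coordinates and the $m+1$ ``incoming'' coordinates; each symbol $j_i^{(r)}$ owns the pair $\{(1,j_i^{(r)}),(2,j_i^{(r)})\}$, one of which is consumed as an incoming endpoint (determined by the previous step's binary choice) and the other as the outgoing endpoint at step $r$ — a perfect matching between ``slots'' that exists and is unique once the $m$ binary choices and the starting pin are fixed. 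Running this in parallel over all cycles partitions $\{1,2\}\times J$ with no leftover coordinates, giving a bijection between $\prod_i(\text{choices for cycle }i)$ and $\{\pi : \mathrm{Perm}(\pi)=\sigma\}$, hence the count $2^{\sum_i(|\sigma_i|-1)}$. I would present this as an explicit construction of the inverse map plus a short verification that $\mathrm{Perm}$ applied to it returns $\sigma$, which is cleaner than an abstract counting argument.
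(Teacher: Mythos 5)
Your proposal is correct and is essentially the paper's argument: the paper's (one-line) proof likewise identifies the count as the free binary choice of the first coordinate $a\in\{1,2\}$ at each of the $|\sigma_i|-1$ non-initial steps of each cycle, with the starting coordinate pinned, giving $2^{|\sigma_i|-1}$ per cycle. Your version just spells out the bookkeeping (and correctly flags the $\{0,1\}$ vs.\ $\{1,2\}$ typo) that the paper leaves implicit.
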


\begin{proof}
    The freedom in the algorithm is the choice of $a_1,\dots ,a_{|\sigma_i|-1} \in \{1,2\}$ for each cycle which gives $2^{|\sigma_i|-1}$ choices per cycle $\sigma_i$.
\end{proof}

We are now ready to prove Proposition \ref{Convergence of Moments}.

\begin{proof}\textbf{(Proof of Proposition~\ref{Convergence of Moments})}
For convenience we assume that $k_1,\dots ,k_{j_1}>1$, otherwise simply observe $\Tilde{a}_1(t)$ is independent of $\Tilde{a}_j(t),\Tilde{b}_j(t)$ for $j>1$. 

Now recall from Definition~\ref{Def of approximate Tridiagonal Entries} that $\Tilde{a}_{j+1}(t) = \sqrt{n} \theta(t)^T P_{2j-1}(\Tilde{M}(t)/\sqrt{n})\theta(t)$ for $j>0$, and $\frac{1}{\sqrt{n}}(\Tilde{b}_{j+1}(t)^2-n)=\sqrt{n} \theta(t)^T P_{2j}(\Tilde{M}(t)/\sqrt{n})\theta(t)$ for all $j \geq 0$. Taking the conditional expectation with respect to $(\Tilde{M}(t))_{t \in [0,T]}$ we can apply the previous two lemmas to obtain
\begin{equation*}
    \begin{split}
        \E&\bigg[ \bigg( \prod_{i=1}^{j_1} (\theta(t_i)^T P_{2k_i-1}(\Tilde{M}(t_i)/\sqrt{n})\theta(t_i))\bigg)\bigg( \prod_{i=1}^{j_1} (\theta(t_i')^T P_{2k_i'}(\Tilde{M}(t_i')/\sqrt{n})\theta(t_i))\bigg)\bigg] \\
        &=\sum_{\sigma=\sigma_1\dots \sigma_{\ell} \in \mathcal{S}([j_1+j_2])}\prod_{i \in [j_1+j_2]} \frac{e^{-|\tau_{i}-\tau_{\sigma(i)}|}}{n} \E \prod_{\ell=1}^{l} 2^{|\sigma_{\ell}|-1}\mathrm{Tr}(\prod_{i \in \sigma_{\ell}} R_i(\Tilde{M}(\tau_i)/\sqrt{n})),
    \end{split}
\end{equation*}
where 
\begin{equation*}
    R_i(x)=\begin{cases} P_{2k_i-1}(x) \text{ if } i\leq j_1\\ P_{2k_{i-j_1}'}(x) \text{ if } i>j_1 \end{cases}, \hspace{1cm} \tau_i=\begin{cases} t_i \text{ if } i\leq j_1\\ t_{i-j_1}'(x) \text{ if } i>j_1 \end{cases}. 
\end{equation*} 

Next, we can use the factorization property, Corollary~\ref{Factorization Lemma}, to see that any permutation $\sigma$ with $k$ fixed points at $i_1,\dots ,i_k$ contributes at leading order. We have
\begin{equation*}
    \begin{split}
        n^{-(j_1+j_2)}&\prod_{i \in [j_1+j_2]} e^{-|\tau_{i}-\tau_{\sigma(i)}|} \prod_{l=1}^{k}\E[\mathrm{Tr}(R_{i_l}(\tau_{i_l}))] \prod_{|\sigma_{\ell}|>1} 2^{|\sigma_{\ell}|-1}\E\bigg[\mathrm{Tr}(\prod_{i \in \sigma_{\ell}} R_i(\Tilde{M}(\tau_i)/\sqrt{n}))\bigg]\\
        &=O(n^{-(j_1+j_2)} 1^{l} n^{(j_1+j_2-l)/2}) \\
        &= O(n^{-(j_1+j_2+l)/2}),
    \end{split}
\end{equation*}
where we make use of the fact that $\E[\mathrm{Tr}(P(\Tilde{M}(t)/\sqrt{n}))]=O(n)$ for any polynomial $P(x)$, but if $P(x)=R_i(x)$, then $\E[\mathrm{Tr}(P(\Tilde{M}(t)/\sqrt{n}))]=O(1)$ by Lemma \ref{Semicircular Cov}. However, any term coming from a permutation with less than $j/2$ number of cycles will also yield an $O(n^{-(j-1)/2})$ contribution from the same argument. Thus the contributing terms must come from pairings $\sigma \in \mathcal{P}_2([j_1+j_2])$.

Finally, if $\sigma$ pairs an index from $[j_1]$ with another from $[j_1+1:j_2]$, then we also get an order $O(n^{-j/2-1})$ contribution since for any $i,i'$, $\E[\mathrm{Tr}(P_{2k_i}(\Tilde{M}(t_i)/\sqrt{n}))P_{2k'_{i'}-1}(\Tilde{M}(t_{i'})/\sqrt{n}))]=O(1)$, once again by Lemma~\ref{Semicircular Cov}. Thus the contributing permutation is $\sigma=\sigma_1\sigma_2$ where $\sigma_1 \in \mathcal{P}_2([j_1])$ and $\sigma_2 \in \mathcal{P}_2([j_1+1:j_2])$. Thus, we get
\begin{equation*}
    \begin{split}
        \E\bigg[ &\bigg( \prod_{i=1}^{j_1} (\theta(t_i)^T P_{2k_i-1}(\Tilde{M}(t_i)/\sqrt{n})\theta(t_i))\bigg)\bigg( \prod_{i=1}^{j_1} (\theta(t_i')^T P_{2k_i'}(\Tilde{M}(t_i')/\sqrt{n})\theta(t_i))\bigg)\bigg] \\
        &=n^{-(j_1+j_2)/2}\left(\bigg(2^{j_1/2}\sum_{\sigma \in \mathcal{P}_2(j_1)} \prod_{(i_1,i_2) \in \pi} e^{-(2k_{i_1}+1)|t_{i_1}-t_{i_2}|}\delta_{k_{i_1},k_{i_2}} \bigg)\right. \\
        &\hspace{3.5cm}\times \left.\bigg(2^{j_2/2}\sum_{\sigma \in \mathcal{P}_2(j_2)} \prod_{(i_1,i_2) \in \pi}e^{-2(k'_{i_1}+1)|t_{i_1}'-t_{i_2}'|}\delta_{k'_{i_1},k'_{i_2}} \bigg)+O(n^{-1/2})\right).
    \end{split}
\end{equation*}
Multiplying both sides by $n^{(j_1+j_2)/2}$ completes the proof of Proposition \ref{Convergence of Moments}. 
\end{proof}

Finally, we show tightness of the process $E_n(t)$, which we prove after Lemma~\ref{Tightness Lemma}.  
\begin{prop}\label{lem:tightness}
    $E_n(t)$ is a tight sequence of random $C[0,T]$ curves. 
\end{prop}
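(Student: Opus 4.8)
The plan is to verify the standard Kolmogorov--Chentsov tightness criterion for random curves in $C[0,T]$: it suffices to exhibit, uniformly in $n$, tightness of the initial values together with an increment bound $\E[\|Y_n(t)-Y_n(s)\|^{4}]\leq C_{k,T}|t-s|^{2}$ for all $s,t\in[0,T]$. Since all the available machinery is phrased in terms of the approximate entries, I would first reduce to them: by Theorem~\ref{Entry Approximation Thm} we have $\|E_n-\Tilde{E}_n\|_{C[0,T]}\to 0$ almost surely, so $\{E_n\}$ is tight iff $\{\Tilde{E}_n\}$ is, and it is enough to prove tightness of $\Tilde{E}_n(t)=\big(\Tilde{a}_1(t),\dots,\Tilde{a}_k(t),\tfrac1{\sqrt n}(\Tilde{b}_1(t)^2-n),\dots,\tfrac1{\sqrt n}(\Tilde{b}_k(t)^2-n)\big)$.

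Tightness of the initial values is immediate: by Definition~\ref{Def of approximate Tridiagonal Entries} each component of $\Tilde{E}_n(0)$ is of the form $\sqrt n\,\theta(0)^T P_m(\Tilde{M}(0)/\sqrt n)\,\theta(0)$ with $m\leq 2k$ fixed, and its second moment is $O(1)$ uniformly in $n$ --- this is the single-time instance of Proposition~\ref{Convergence of Moments}, or a direct application of Wick's formula together with the trace estimate of Lemma~\ref{Semicircular Cov}. Hence $\{\Tilde{E}_n(0)\}$ is bounded in $L^2$ and therefore tight in $\R^{2k}$.

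For the increment bound I would first control the second moment and then upgrade the exponent. Writing a generic component as $X(t):=\sqrt n\,\theta(t)^T P_m(\Tilde{M}(t)/\sqrt n)\,\theta(t)$, telescope
\begin{align*}
X(t)-X(s)&=\sqrt n\,(\theta(t)-\theta(s))^T P_m(\Tilde{M}(t)/\sqrt n)\,\theta(t)\\
&\quad+\sqrt n\,\theta(s)^T\big(P_m(\Tilde{M}(t)/\sqrt n)-P_m(\Tilde{M}(s)/\sqrt n)\big)\theta(t)\\
&\quad+\sqrt n\,\theta(s)^T P_m(\Tilde{M}(s)/\sqrt n)\,(\theta(t)-\theta(s)),
\end{align*}
and expand the middle difference of polynomials further --- using the three-term recurrence of Section~\ref{Chebyshev Polynomial Section} --- into a sum of terms each carrying exactly one factor $(\Tilde{M}(t)-\Tilde{M}(s))/\sqrt n$. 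Every resulting term then contains exactly one ``increment factor'': a coordinate of $\theta(t)-\theta(s)$ or an entry of $(\Tilde{M}(t)-\Tilde{M}(s))/\sqrt n$. From the Ornstein--Uhlenbeck covariances one has $\E[\|\theta(t)-\theta(s)\|^2]=O(|t-s|)$ and $\E[(\Tilde{M}_{ij}(t)-\Tilde{M}_{ij}(s))^2]=O(|t-s|)$ (both from $1-e^{-c|t-s|}\leq c|t-s|$). A Wick expansion of $\E[(X(t)-X(s))^2]$, identical in structure to the computations in the proofs of Lemma~\ref{Semicircular Cov} and Proposition~\ref{Convergence of Moments} but now recording which pairing meets the increment factor, combined with the trace estimates of Lemma~\ref{Semicircular Cov} and the factorization Lemma~\ref{Factorization Lemma}, shows that every term is $O(|t-s|)$ uniformly in $n$; hence $\sup_n\E[\|\Tilde{E}_n(t)-\Tilde{E}_n(s)\|^2]\leq C_{k,T}|t-s|$ on $[0,T]$. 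Finally, each component of $\Tilde{E}_n(t)-\Tilde{E}_n(s)$ lies in the direct sum of the first $2k+1$ homogeneous chaoses over the Gaussian Hilbert space generated by $\theta(s),\theta(t),\Tilde{M}(s),\Tilde{M}(t)$, so Gaussian hypercontractivity --- whose constant depends only on $k$ and $p$, not on $n$ --- yields $\E[\|\Tilde{E}_n(t)-\Tilde{E}_n(s)\|^{2p}]\leq C_{k,T,p}|t-s|^{p}$ for every $p\geq1$; with $p=2$ this is the desired bound, and the Kolmogorov--Chentsov criterion gives tightness of $\Tilde{E}_n$, hence of $E_n$.

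The main obstacle is the bookkeeping in the second-moment increment estimate: once an increment factor is extracted from a bilinear form, one must check that the surviving Wick pairings reproduce exactly the power of $n$ present in the stationary computation, so that the $|t-s|$ gained from the increment is not offset by any compensating power of $n$. This is the same non-crossing-pairing analysis already carried out for Lemma~\ref{Semicircular Cov} and Proposition~\ref{Convergence of Moments}, now applied term by term to the finitely many pieces of the telescoped sum. Alternatively, one may bypass part of this by applying the chaining/concentration apparatus behind Lemma~\ref{Concentration Bound} directly to the increment functional $F(\Tilde{M}(t)/\sqrt n,\theta_0(t),\dots,\theta_k(t))-F(\Tilde{M}(s)/\sqrt n,\theta_0(s),\dots,\theta_k(s))$ to obtain a quantitative, $n$-uniform modulus-of-continuity estimate, which the appendix is already equipped to prove.
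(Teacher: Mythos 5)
Your overall architecture (reduce to $\Tilde{E}_n$ via Theorem~\ref{Entry Approximation Thm}, verify a Kolmogorov--Chentsov moment criterion, and upgrade the $L^2$ increment bound to $L^{2p}$ by Gaussian hypercontractivity, since each component lies in a chaos of order at most $2k+2$) is a legitimate route and is genuinely different from the paper's. The paper's proof is essentially one line: it applies the appendix chaining estimate (Lemma~\ref{Tightness Lemma}) to $G(A,x)=x^TP_{2j-1}(A)x$ and $x^TP_{2j}(A)x$, obtaining an $n$-uniform tail bound on the $C^{1/2-\delta}$ H\"older norm, and then invokes the compact embedding of H\"older balls into $C[0,T]$ --- which is precisely the ``alternative'' you mention in your last sentence. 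Your reduction from $E_n$ to $\Tilde{E}_n$, the tightness of the initial values, and the hypercontractivity step are all sound.

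The gap is in the claimed second-moment increment bound. After telescoping, it is \emph{not} true that every term is $O(|t-s|)$ uniformly in $n$. Concretely, with $A_t=\Tilde{M}(t)/\sqrt{n}$ and $\Delta A=A_t-A_s$, the sub-term $S:=\sqrt{n}\,\theta(s)^T\Delta A\,A_t\,\theta(t)$ (which occurs already for $P_2$) has conditional mean $\frac{e^{-|t-s|}}{\sqrt{n}}\mathrm{Tr}(\Delta A\,A_t)$, and $\E[\mathrm{Tr}(\Delta A\,A_t)]=\frac{1}{n}\sum_{i,j}(1+\delta_{ij})(1-e^{-|t-s|})\approx n(1-e^{-|t-s|})$, so $\E[S]\approx \sqrt{n}\,(1-e^{-|t-s|})$ and $\E[S^2]\gtrsim n|t-s|^2$, which exceeds $|t-s|$ as soon as $|t-s|\gg 1/n$. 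These large contributions cancel only when the telescoped pieces are recombined (by stationarity $\E[\mathrm{Tr}(P_m(A_t))-\mathrm{Tr}(P_m(A_s))]=0$), so a term-by-term bound on your decomposition cannot close; you must keep the trace (conditional-mean) parts of $P_m(A_t)-P_m(A_s)$ together and exploit the cancellation. Relatedly, Lemma~\ref{Semicircular Cov} and Lemma~\ref{Factorization Lemma} only provide estimates with additive $O(1/n)$-type errors carrying no factor that vanishes as $t\to s$, so they cannot certify a bound $C_{k,T}|t-s|$ uniformly in $n$ in the regime $|t-s|\lesssim n^{-1}$; one would need the exact finite-$n$ Wick expansion together with a check that every pairing's contribution to $\E[(X(t)-X(s))^2]$ carries a factor $1-e^{-d|t-s|}\le d|t-s|$. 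This is repairable, but it is the actual content of the estimate; as written it is asserted rather than proved, and the cleanest repair is your own fallback, i.e.\ the paper's chaining argument through Lemma~\ref{Tightness Lemma}.
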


\appendix
\section{Proof of Lemma~\ref{Concentration Bound}}\label{Concentration Proof}

\begin{lm}
    Define 
    \begin{equation*}
        G(A,x):=F(A,P_0(A)x,P_1(A)x,\dots,P_k(A) x),
    \end{equation*}
    and also 
    \begin{equation*}
        \Tilde{G}(A,x) := G\left(\frac{\min(100,||A||)}{||A||}A,\frac{\min(100,||x||)}{||x||}x\right). 
    \end{equation*}
    Then $\Tilde{G}(A,x)$ is Lipschitz on $\R^{n^2+n}$ with $||\Tilde{G}||_{Lip} \lesssim 1$.
\end{lm}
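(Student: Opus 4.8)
The statement is essentially a bookkeeping exercise: $\Tilde{G}$ is the composition
\[
\Tilde{G} \;=\; \big(F\big|_{K}\big)\circ\Xi_k\circ\big(r_{100},r_{100}\big),
\qquad
\Xi_k(A,x):=\big(A,P_0(A)x,\dots,P_k(A)x\big),
\]
where $r_{100}(y):=\frac{\min(100,||y||)}{||y||}y$ (with $r_{100}(0):=0$) is the radial retraction onto the ball of radius $100$ in the relevant norm, and $K$ denotes the image of $\Xi_k$ on the product of two such balls. So the plan is to bound the Lipschitz constant of each of the three factors by a quantity that does not grow with $n$, and then multiply.

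For the retraction I would invoke the standard fact that $y\mapsto r_{100}(y)$ is Lipschitz with an absolute constant: when $||\cdot||$ comes from an inner product it is exactly the nearest--point projection onto the closed convex ball $\{||y||\le 100\}$, hence $1$-Lipschitz, and in any normed space its Lipschitz constant is at most $2$. Thus $(r_{100},r_{100})$ maps $\R^{n^2+n}$ onto $\{||A||\le 100\}\times\{||x||\le 100\}$ with Lipschitz constant $\le 2$, uniformly in $n$ and $k$.

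For $\Xi_k$ I only need its restriction to that product of balls, and the key is to estimate through \emph{submultiplicativity} of the matrix norm rather than any crude bound. The telescoping identity $A^m-B^m=\sum_{i=0}^{m-1}A^i(A-B)B^{m-1-i}$ gives $||A^m-B^m||\le m\,100^{m-1}||A-B||$ when $||A||,||B||\le 100$, and feeding this into the explicit coefficients of $P_j$ in \eqref{P_k(x) explicit formula} yields $||P_j(A)-P_j(B)||\le C_j||A-B||$ and $||P_j(A)||\le C_j'$, with $C_j,C_j'$ depending only on $j$. Combining these via $||P_j(A)x-P_j(B)y||\le ||P_j(A)||\,||x-y||+||P_j(A)-P_j(B)||\,||y||$ shows that $\Xi_k$ is $C(k)$-Lipschitz there, with $C(k)$ independent of $n$, and its image $K$ lies in the set where the matrix slot has norm $\le 100$ and the $j$-th vector slot has norm $\le 100\,C_j'$ — bounds again depending only on $k$. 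Finally, by hypothesis $F$ is Lipschitz on the (for fixed $n$, compact) set $K$ with a constant bounded independently of $n$ and $k$, so $F|_{K}\circ\Xi_k$ is $C(k)$-Lipschitz on the product of balls, and composing with the $\le 2$-Lipschitz retraction gives $||\Tilde{G}||_{\mathrm{Lip}}\lesssim 1$ on all of $\R^{n^2+n}$, with the implied constant depending only on $k$.

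The only step that is not purely formal — and the one I would write out carefully — is keeping every constant honestly free of $n$. Concretely: one must estimate $P_j(A)-P_j(B)$ and $P_j(A)x$ through submultiplicativity and the compatibility bound $||P_j(A)x||\le ||P_j(A)||\,||x||$, never through the ($n$-dependent) Euclidean diameter of the truncation ball; and one must use the quantitative form of the hypothesis on $F$, namely that its Lipschitz constant on a compact set depends only on the set, applied to the particular $K$ above whose defining bounds are $O_k(1)$. Everything else is composition of Lipschitz maps.
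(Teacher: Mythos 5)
Your proposal is correct and takes essentially the same route as the paper: restrict to the ball $\{||A||\le 100,\ ||x||\le 100\}$, bound the Lipschitz constant of $(A,x)\mapsto (A,P_0(A)x,\dots,P_k(A)x)$ there via submultiplicativity and the explicit coefficients of $P_j$ (so all constants depend only on $k$, not $n$), and then invoke the hypothesis that $F$ has an $n$-independent Lipschitz constant on the resulting compact set. The only difference is presentational: you treat the truncation by composing with the radial retraction and citing its universal Lipschitz bound, whereas the paper simply notes that $\Tilde{G}$ agrees with $G$ on the ball and extends with the same constant outside, so your handling of that step is, if anything, slightly more explicit.
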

\begin{proof}
Let $\mathcal{A}=\{(A,x)\}\in \R^{n^2+n}: ||x|| \leq 100, ||A||_{op} \leq 100\}$. Then, on the set $\mathcal{A}$ we have
\begin{equation*}
    \begin{split}
        ||P_k&((A+\Delta A)(x+\Delta x))-P_k(A x)|| \\
        &\leq ||(P_k(A+\Delta A)-P_k(A))x||+||P_k(A+\Delta A)\Delta x|| \\
        &\leq ||P_k(A+\Delta A)-P_k(A)|| \cdot ||x|| + ||P_k(A+\Delta A)||\cdot ||\Delta x|| \\
        &\leq \sum_{\ell=0}^{k}\left(\sum_{j=0}^{\ell-1} c_{j,\ell}||A||^j ||\Delta A||^{\ell-j} ||x||+\sum_{j=0}^{\ell} c_{j,\ell}||A||^j ||\Delta A||^{\ell-j} ||\Delta x|| \right)\\
        &\lesssim ||\Delta A|| + ||\Delta x||.
    \end{split}
\end{equation*}
For some coefficients $c_{j, l}$. Recalling that $F$ is uniformly Lipschitz on compact sets. By the previous computation and the gradient assumption on $F$, $G$ is Lipschitz on $\mathcal{A}$. Thus, $\Tilde{G}$ is continuous and agrees with $G$ on $\mathcal{A}$, and is obviously Lipschitz with the same constant on all of $\mathcal{A}^c$. 
\end{proof}

\begin{lm}\label{Supremum Lemma}
    For some constants $C,c(T)>0$. We have the following sub-Gaussianity estimates. For any $u,\epsilon>0$ we have
    \begin{gather}\label{Norm Supremum}
        \pr\bigg(\sup_{t \in [0,T]}(||\theta(t)||-1)>\frac{u}{\sqrt{n}}\bigg) \leq  Ce^{-c(T)\frac{u^2}{\log(n)}},\\
    \label{Operator Norm Supremum} \pr\bigg(\sup_{t \in [0,T]}||M(t)||_{op}-(2+\epsilon)\sqrt{n}>u\bigg) \leq Ce^{-c(T)\frac{u^2}{\log(n)}},\\ \label{Tilde G Concentration}
    \pr\bigg(\sup_{t\in [0,T]}|\Tilde{G}(M(t)/\sqrt{n},\theta(t))-\E[\Tilde{G}(M(t)/\sqrt{n},\theta(t))]|>u\bigg) \leq Ce^{-c(T)\frac{nu^2}{\log(n)}}.
    \end{gather}
\end{lm}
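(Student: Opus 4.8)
The plan is to exploit the fact that all three estimates have the same shape: the quantity inside the supremum is, at each fixed time $t$, a Lipschitz function of a Gaussian vector, so Gaussian concentration (Borell--TIS) controls it pointwise in $t$, and the passage to $\sup_{t\in[0,T]}$ is handled by discretizing time on a polynomially fine grid, union-bounding over the grid, and controlling the oscillation of the processes — which are built from stationary Ornstein--Uhlenbeck coordinates, hence Hölder-$(1/2-\eta)$ in $t$ — between consecutive grid points. So the two steps are: (i) the fixed-time sub-Gaussian estimates; (ii) the uniform-in-$t$ upgrade.

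For (i): in \eqref{Norm Supremum}, $\theta(t)=n^{-1/2}v(t)$ where, at fixed $t$, $v(t)$ is a standard Gaussian vector in $\R^n$; the map $v\mapsto n^{-1/2}\|v\|$ is $n^{-1/2}$-Lipschitz and $\E\|\theta(t)\|\le(\E\|\theta(t)\|^2)^{1/2}=1$, so Borell--TIS gives $\pr(\|\theta(t)\|-1>u/\sqrt n)\le\pr(\|\theta(t)\|-\E\|\theta(t)\|>u/\sqrt n)\le 2e^{-u^2/2}$. In \eqref{Operator Norm Supremum}, $A\mapsto\|A\|_{op}$ is $1$-Lipschitz for the Frobenius norm, hence an $O(1)$-Lipschitz function of the independent Gaussian entries of $M(t)$, and $\E\|M(t)\|_{op}\le(2+\tfrac{\epsilon}{2})\sqrt n$ for $n$ large by the GOE operator-norm asymptotics, so Borell--TIS gives $\pr(\|M(t)\|_{op}-(2+\epsilon)\sqrt n>u)\le\pr(\|M(t)\|_{op}-\E\|M(t)\|_{op}>u+\tfrac{\epsilon}{2}\sqrt n)\le 2e^{-cu^2}$. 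In \eqref{Tilde G Concentration}, $(M(t),\theta(t))$ is jointly Gaussian — a linear image of a standard Gaussian with $O(1)$ coefficients — and since $\Tilde G$ has Lipschitz norm $\lesssim 1$ on $\R^{n^2+n}$ by the preceding lemma, composing with the $n^{-1/2}$ rescaling makes $\Tilde G(M(t)/\sqrt n,\theta(t))$ an $O(n^{-1/2})$-Lipschitz function of the underlying standard Gaussian, so Borell--TIS gives $\pr(|\Tilde G(M(t)/\sqrt n,\theta(t))-\E[\,\cdot\,]|>s)\le 2e^{-cns^2}$ — the extra factor $n$ in the exponent of \eqref{Tilde G Concentration} being exactly this rescaling.

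For (ii): fix $\delta=n^{-5}$ and let $\mathcal G\subset[0,T]$ be the grid of mesh $\delta$, so $|\mathcal G|\le Tn^5+1$. For any of the three processes $Z_n(t)$ above, $\sup_{[0,T]}(Z_n-\E Z_n)\le\max_{\mathcal G}(Z_n-\E Z_n)+\mathrm{Osc}_\delta(Z_n)$ with $\mathrm{Osc}_\delta(Z_n):=\sup_{|s-t|\le\delta}|Z_n(t)-Z_n(s)|$; a union bound over $\mathcal G$ together with the fixed-time bounds controls the first term, contributing a factor $|\mathcal G|=e^{O(\log n)}$, and it is this polynomial-in-$n$ union bound that accounts for the (mild, and all that is needed downstream) $\log n$ loss in the final exponent. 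For the oscillation, the coordinate processes involved — the entries of $x_0(t)$ defining $\theta$, the entries of $M(t)$, and, for \eqref{Tilde G Concentration}, these together composed with the globally Lipschitz $\Tilde G$ — are stationary OU processes with $\E[(x(t)-x(s))^2]\le 2|t-s|$, so a maximal-inequality/chaining estimate shows $\mathrm{Osc}_\delta\le n^{-1}$, say, on an event of probability $\ge 1-Ce^{-cn}$; this lies below half the relevant threshold unless the threshold is so small that the claimed right-hand side already exceeds $1$, in which case the bound is vacuous. Splitting into the regime where the threshold is large (union bound dominates, yielding the stated bound) and where it is small (vacuous), and absorbing the finitely many small $n$ into the constants, gives all three estimates.

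The only genuinely non-routine ingredient is the uniform-in-time oscillation control in (ii): it has to hold simultaneously over all the sub-intervals of the mesh (polynomially many in $n$) and with Gaussian tails in order to survive the union bound, and for the vector- and matrix-valued objects this is a chaining argument — the same estimate that underlies the tightness statement, Lemma~\ref{Tightness Lemma}, and is deferred to the same part of the appendix. Step (i), by contrast, is a direct application of Gaussian concentration once the Lipschitz constants and the expectation bounds $\E\|\theta(t)\|\le 1$ and $\E\|M(t)\|_{op}\le(2+\tfrac{\epsilon}{2})\sqrt n$ are noted.
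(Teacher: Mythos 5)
Your overall strategy coincides with the paper's: pointwise Borell--TIS concentration, a time discretization at polynomial resolution whose union bound is the source of the $\log(n)$ in the exponent, and a chaining/modulus-of-continuity estimate to control the process between discretization times (the paper instead applies the chaining tail inequality on each of $Cn$ subintervals of length $1/n$ and then takes a maximum of $Cn$ sub-Gaussian variables, rather than your grid-plus-oscillation split, but these are minor variants of the same argument). Your step (i) is fine, including the observation that the $n^{-1/2}$ rescaling of the Gaussian coordinates is exactly what produces the extra factor of $n$ in the exponent of \eqref{Tilde G Concentration}.

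The gap is in how you handle the oscillation term. You control $\mathrm{Osc}_\delta$ only at the fixed threshold $n^{-1}$, on an event of probability $1-Ce^{-cn}$, so the bound you actually derive has the form $Cn^{5}e^{-cu^{2}}+Ce^{-cn}$ (resp.\ $Cn^{5}e^{-cnu^{2}}+Ce^{-cn}$ for \eqref{Tilde G Concentration}). The additive $Ce^{-cn}$ is not dominated by the claimed right-hand side once $u^{2}\gtrsim n\log(n)$ in \eqref{Norm Supremum}--\eqref{Operator Norm Supremum}, or once $u^{2}\gtrsim \log(n)$ in \eqref{Tilde G Concentration}; your case split only disposes of the opposite regime of small $u$, where the stated bound exceeds $1$, yet the lemma is asserted for all $u>0$. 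The fix is the one you allude to (``Gaussian tails'') but do not carry out: give the oscillation a $u$-dependent threshold, e.g.\ bound $\pr\left(\mathrm{Osc}_\delta>\tfrac{u}{2\sqrt{n}}\right)$ for \eqref{Norm Supremum}, and note that over mesh intervals of length $n^{-5}$ the chaining estimate for the OU and GOE increments is sub-Gaussian with variance proxy of order $\delta$, so this probability is at most $Ce^{-cn^{\alpha}u^{2}}$ for some $\alpha>0$ and is negligible against the grid term for every $u$. The paper's route avoids the issue automatically, since each per-interval chaining bound retains Gaussian tails in $u$ and the max over $Cn$ intervals is then $C\sqrt{\log(n)}$-sub-Gaussian, which is precisely the stated bound.
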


\begin{proof}

Observe that $X(t)-X(s) \sim N(0,|t-s|I)$,  $\E||X(t)-X(s)||\leq (\E||X(t)-X(s)||^2)^{1/2}=\sqrt{n|t-s|}$ and also the function $x\mapsto ||x||$ is $1$-Lipschitz. Thus by the Gaussian concentration \cite{Ha16} we have
\begin{equation*}
    \pr(||X(t)-X(s)||>\sqrt{n|t-s|}+u)\leq \pr(||X(t)-X(s)||-\E||X(t)-X(s)||>u) \leq e^{-\frac{u^2}{4|t-s|}},
\end{equation*}
for any $u>0$. So in an interval $I \subset [0,T]$ of length $\leq n^{-1}$, we have $\pr(||X(t)-X(s)||>1+u)\leq e^{-\frac{u^2}{4|t-s|}}$ for all $t,s \in I$. In particular, it follows for $u>0$ that
\begin{equation}\label{Norm Continuity}
    \pr(||X(t)-X(s)||>u)\leq C e^{-u^2/4},
\end{equation}
for some $C>0$. Similarly, Gaussian concentration also implies the one time estimate
\begin{equation*}
    \pr(||X(t)||-\sqrt{n}>u)\leq 2 e^{-\frac{u^2}{4}}.
\end{equation*}
Thus $||X(t)||-\sqrt{n}$ is a separable Sub-Gaussian process on $(I,d)$ with $d(t,s)=\sqrt{|t-s|}$. From the chaining tail inequality \cite[Thm 5.29]{Ha16} we obtain
\begin{equation}
    \pr\bigg(\sup_{t \in I} ||X(t)||-\sqrt{n}>u\bigg) \leq Ce^{-c u^2}, 
\end{equation}
on $I$. 

Subdividing $[0,T] = \cup_{j=1}^{Cn} I_j$ for intervals $I_j$ of length $\leq 1/n$, we recall the max of $Cn$ sub-Gaussian random variables is $C\sqrt{\log(n)}$ sub-Gaussian, in which
\begin{equation*}
    \pr\bigg(\sup_{t \in [0,T]} ||X(t)||-\sqrt{n}>u\bigg) =\pr\bigg(\max_{i=1,\dots,Cn}\sup_{t \in I_j} ||X(t)||-\sqrt{n}>u\bigg)\leq Ce^{-c \frac{u^2}{\log(n)}},
\end{equation*}
holds for any $u>0$ and some constant $C,c$ which only depends on $T$. This proves \eqref{Norm Supremum} as $\theta(t)=\frac{X(t)}{\sqrt{n}}$.

The proof of the second inequality, \eqref{Operator Norm Supremum}, is identical once we recall that for $n$ sufficiently large $\E[||M(t)||_{op}] \leq (2+\epsilon)\sqrt{n}$ by the Bai-Yin theorem (i.e. see \cite{Anderson_Guionnet_Zeitouni_2009}). Note along the way we obtain,
\begin{equation}\label{Operator Norm Continuity}
    \pr(||M(t)-M(s)||_{op}>\sqrt{(2+\epsilon)n|t-s|}+u)\leq C e^{-\frac{cu^2}{4|t-s|}}.
\end{equation}

Finally for \eqref{Tilde G Concentration}, by the Gaussian concentration and the Lipschitz bound on $\Tilde{G}$, we have the one time estimate
\begin{equation*}
    \pr\left(\Tilde{G}(M(t)/\sqrt{n},\theta(t))-\E\Tilde{G}(M(t)/\sqrt{n},\theta(t))>u\right)\leq e^{-c n u^2}.
\end{equation*}
Next, for an interval $I \subset [0,T]$ of length $\leq n^{-1}$, combining the Lipschitz bound on $\Tilde{G}$, \eqref{Norm Continuity} and \eqref{Operator Norm Continuity}, we have 
\begin{equation}\label{two time tildeG estimate}
    \pr\left(|\Tilde{G}(M(t)/\sqrt{n},\theta(t))-\Tilde{G}(M(s)/\sqrt{n},\theta(s))|>u\right)\leq C e^{-c n u^2},
\end{equation}
for $t,s \in I$. Now proceeding identically to the proof of $\eqref{Norm Supremum}$, we obtain \eqref{Tilde G Concentration}.
\end{proof}

\begin{lm}
We have
    \begin{equation}\label{G almost always tilde G}
        \pr\bigg(\sup_{t \in [0,T]}|\Tilde{G}(M(t)/\sqrt{n},\theta(t))-G(M(t)/\sqrt{n},\theta(t))| \neq 0\bigg) \lesssim e^{-c\frac{n}{\log(n)}}.
    \end{equation}
\end{lm}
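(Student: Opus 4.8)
The plan is to exploit the fact that $\Tilde{G}$ was defined precisely so as to coincide with $G$ on a bounded region, and then to show that $(M(t)/\sqrt{n},\theta(t))$ stays in that region for all $t\in[0,T]$ with overwhelming probability, using the supremum bounds already established in Lemma~\ref{Supremum Lemma}.

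First I would record the elementary observation that, from the definition
\begin{equation*}
    \Tilde{G}(A,x) = G\left(\tfrac{\min(100,||A||_{op})}{||A||_{op}}A,\tfrac{\min(100,||x||)}{||x||}x\right),
\end{equation*}
the two rescaling factors are both equal to $1$ as soon as $||A||_{op}\leq 100$ and $||x||\leq 100$; hence $\Tilde{G}(A,x)=G(A,x)$ on that set. Applying this pointwise in $t$ with $A = M(t)/\sqrt{n}$ and $x=\theta(t)$, we conclude that the event appearing in \eqref{G almost always tilde G} is contained in
\begin{equation*}
    \Big\{\sup_{t\in[0,T]}||M(t)||_{op} > 100\sqrt{n}\Big\}\cup \Big\{\sup_{t\in[0,T]}||\theta(t)|| > 100\Big\}.
\end{equation*}

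It then remains to bound the two events on the right by a union bound. For the first, I would apply \eqref{Operator Norm Supremum} with $\epsilon = 1$ and $u = 97\sqrt{n}$, so that $(2+\epsilon)\sqrt{n}+u = 100\sqrt{n}$, giving a bound $Ce^{-c(T)\,97^2 n/\log(n)}$. For the second, I would apply \eqref{Norm Supremum} with $u = 99\sqrt{n}$, so that $1 + u/\sqrt{n} = 100$, giving $Ce^{-c(T)\,99^2 n/\log(n)}$. Summing the two contributions yields the claimed bound $\lesssim e^{-c n/\log(n)}$ after relabelling the constant $c$. The only points requiring care are the bookkeeping of which norm ($\|\cdot\|_{op}$ for the matrix argument) is meant in the definition of $\Tilde{G}$ and the choice of the offsets $u$ so that the thresholds land exactly at $100$; there is no substantive obstacle beyond this.
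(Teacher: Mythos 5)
Your proposal is correct and follows essentially the same route as the paper: the event where $\Tilde{G}$ and $G$ differ is contained in $\{\sup_{t}||\theta(t)||>100\}\cup\{\sup_{t}||M(t)||_{op}>100\sqrt{n}\}$, and both pieces are controlled by \eqref{Norm Supremum} and \eqref{Operator Norm Supremum}. The paper states this in two lines; you have merely made the union bound and the choices of $u$ (and $\epsilon$) explicit, which is fine.
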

\begin{proof}
Notice that the event $\{\sup_{t \in [0,T]}|\Tilde{G}(M(t)/\sqrt{n},\theta(t))-G(M(t)/\sqrt{n},\theta(t))| \neq 0\}$ is a subset of $\{\sup_{t \in [0,T]}||\theta(t)||>100\}\cup \{\sup_{t \in [0,T]}||M(t)||_{op}>100\sqrt{n}\}$. Thus the result follows from \eqref{Norm Supremum} and \eqref{Operator Norm Supremum}.
\end{proof}

\begin{lm}
We have 
    \begin{equation}\label{Expectation G almost Expectation  tilde G}
        \sup_{t \in [0,T]}|\E\Tilde{G}(M(t)/\sqrt{n},\theta(t))-\E G(M(t)/\sqrt{n},\theta(t))| \lesssim e^{-c\frac{n}{\log(n)}},
    \end{equation}
    for sufficiently large $n$. 
\end{lm}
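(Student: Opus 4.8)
The plan is to use the fact that $\Tilde{G}$ and $G$ coincide on the event
\begin{equation*}
    \mathcal{A}_n(t):=\{||\theta(t)||\leq 100\}\cap\{||M(t)/\sqrt{n}||_{op}\leq 100\},
\end{equation*}
and differ only on its low-probability complement, since on $\mathcal{A}_n(t)$ the truncations defining $\Tilde{G}$ act as the identity. Before estimating, I would record two structural facts. First, $\Tilde{G}$ is globally bounded by a constant $C_k$: it is $G$ evaluated at arguments truncated to operator norm (resp.\ norm) at most $100$, and $G(A,x)=F(A,P_0(A)x,\dots,P_k(A)x)$ is bounded on $\{||A||_{op}\leq 100,\ ||x||\leq 100\}$ because $F$ has polynomial growth and the $P_j$ are fixed polynomials. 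Second, by the same reasoning $G$ itself satisfies a polynomial bound $|G(A,x)|\leq C_k(1+||A||_{op})^{d_k}(1+||x||)^{d}$ for suitable exponents $d_k$.

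With these in hand I would bound
\begin{equation*}
    \left|\E\Tilde{G}(M(t)/\sqrt{n},\theta(t))-\E G(M(t)/\sqrt{n},\theta(t))\right|\leq \E\!\left[|\Tilde{G}(M(t)/\sqrt{n},\theta(t))|\,\mathbbm{1}_{\mathcal{A}_n(t)^c}\right]+\E\!\left[|G(M(t)/\sqrt{n},\theta(t))|\,\mathbbm{1}_{\mathcal{A}_n(t)^c}\right].
\end{equation*}
The first term is at most $C_k\,\pr(\mathcal{A}_n(t)^c)$. For the second, Cauchy--Schwarz gives $\E[|G|\mathbbm{1}_{\mathcal{A}_n(t)^c}]\leq (\E[|G|^2])^{1/2}\,\pr(\mathcal{A}_n(t)^c)^{1/2}$, and $\E[|G(M(t)/\sqrt{n},\theta(t))|^2]$ is bounded uniformly in $n$ and $t\in[0,T]$: another application of Cauchy--Schwarz reduces it to bounding moments of $||M(t)/\sqrt{n}||_{op}$ and $||\theta(t)||$, and these have exponential tails concentrated around absolute constants (Bai--Yin and Gaussian concentration, exactly as in Lemma~\ref{Supremum Lemma}), so all of their moments are $O(1)$. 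Finally I would bound $\pr(\mathcal{A}_n(t)^c)\leq \pr(||\theta(t)||>100)+\pr(||M(t)/\sqrt{n}||_{op}>100)\leq Ce^{-cn}$ uniformly in $t\in[0,T]$, by the one-time versions of the estimates in Lemma~\ref{Supremum Lemma} (Gaussian concentration for $||\theta(t)||=||X(t)||/\sqrt{n}$ gives $\pr(||\theta(t)||>100)\leq 2e^{-cn}$, and Bai--Yin together with Gaussian concentration controls the operator norm similarly). Combining the pieces yields $\sup_{t\in[0,T]}|\E\Tilde{G}-\E G|\leq C_k e^{-cn/2}$, which is $\lesssim e^{-cn/\log(n)}$ for $n$ sufficiently large, as claimed.

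I do not anticipate a genuine obstacle in this argument; the two points that need a little care are (i) verifying that $G$ has merely polynomial, not super-polynomial, growth, so that Cauchy--Schwarz against an exponentially small probability still wins, and (ii) ensuring that the moment bounds $\E[\,||M(t)/\sqrt{n}||_{op}^p\,]$ are uniform in $n$, which genuinely requires Bai--Yin (the crude bound $||M(t)||_{op}\leq||M(t)||_F$ would grow with $n$). Everything else is a direct reuse of the concentration estimates established in Lemma~\ref{Supremum Lemma}.
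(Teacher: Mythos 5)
Your proposal is correct and follows essentially the same strategy as the paper: the difference $\Tilde{G}-G$ vanishes outside an exponentially rare event, and one closes the estimate by pairing that small probability with a polynomial moment bound via Cauchy--Schwarz (the paper drops the supremum by stationarity, then applies Cauchy--Schwarz against the indicator of the uniform-in-$t$ event from the preceding lemma, getting $n^{c(d)}e^{-cn/\log n}$; you use the one-time event $\{||\theta(t)||>100\}\cup\{||M(t)/\sqrt{n}||_{op}>100\}$, which even gives the slightly stronger rate $e^{-cn}$). One caveat: your claim that $\Tilde{G}$ is bounded by a constant $C_k$, and that $\E[|G|^2]=O(1)$ uniformly in $n$, is too strong under the paper's growth hypothesis $|F(x)|\leq C(1+||x||)^d$, since the argument of $F$ includes the full matrix and the Euclidean (Frobenius) norm of $M(t)/\sqrt{n}$ is of order $\sqrt{n}$ even when its operator norm is $O(1)$; the correct bounds are polynomial in $n$ (this is exactly why the paper writes $\E|\Tilde{G}-G|^2\leq Cn^{c(d)}$). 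This does not damage your argument, as the polynomial factors are absorbed by the exponentially small probability, but the boundedness claim should be stated as $O(n^{c(d)})$ rather than $O(1)$.
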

\begin{proof}
    First we can get rid of the supremum by stationarity of $M(t)$ and $\theta(t)$. Next, by the assumption $|F(x)| \leq C(1+||x||)^d$, we immediately know that $\E|\Tilde{G}(M(t)/\sqrt{n},\theta(t))-G(M(t)/\sqrt{n},\theta(t))|^2 \leq Cn^{c(d)}$. Then by applying Cauchy-Schwartz inequality and \eqref{G almost always tilde G} we get
    \begin{equation*}
        \begin{split}
             \E|\Tilde{G}&(M(t)/\sqrt{n},\theta(t))-G(M(t)/\sqrt{n},\theta(t))| \\
            &=\E\left[|\Tilde{G}(M(t)/\sqrt{n},\theta(t))-G(M(t)/\sqrt{n},\theta(t))| \cdot \mathbbm{1}_{|\Tilde{G}(M(t)/\sqrt{n},\theta(t))-G(M(t)/\sqrt{n},\theta(t))| \neq 0}\right]\\
            &\leq \E\left[|\Tilde{G}(M(t)/\sqrt{n},\theta(t))-G(M(t)/\sqrt{n},\theta(t))|^2\right]^{1/2}\\
            & \hspace{3cm}\times\pr\bigg(\sup_{t \in [0,T]}|\Tilde{G}(M(t)/\sqrt{n},\theta(t))-G(M(t)/\sqrt{n},\theta(t))| \neq 0\bigg)^{1/2}\\
            &\leq C n^{c(d)} e^{-c(T,k)\frac{n}{\log(n)}} \\
            & \leq C e^{-c(T,k,d)\frac{n}{\log(n)}},
        \end{split}
    \end{equation*}
    for sufficiently large $n$. This ends the proof. 
\end{proof}
Lemma~\ref{Concentration Bound} now follows by combining \eqref{Tilde G Concentration}, \eqref{G almost always tilde G}, and \eqref{Expectation G almost Expectation  tilde G} through the triangle inequality. Lastly we record a quantitative continuity estimates which is used to prove tightness that is already introduced in Proposition~\ref{lem:tightness}. 
\begin{lm}\label{Tightness Lemma}
    Fixing a $\delta>0$, for $n$ large enough we have
    \begin{equation*}
        \pr\bigg(\sup_{t,s \in [0,T]}\bigg|\frac{G(M(t)/\sqrt{n},\theta(t))-G(M(s)/\sqrt{n},\theta(s))}{|t-s|^{1/2-\delta}}\bigg|>u\bigg) \leq Ce^{-c \frac{u^2}{\log(n)}}+Ce^{-c\frac{n}{\log(n)}},
    \end{equation*}
    where $C,c$ may depend on $T$ and $\delta$.
\end{lm}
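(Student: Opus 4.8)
The plan is to mirror the proof of Lemma~\ref{Supremum Lemma}, replacing the pointwise supremum by a Hölder seminorm and paying for the difference with a dyadic chaining. Write $Y(t):=G(M(t)/\sqrt n,\theta(t))$ and $\Tilde Y(t):=\Tilde G(M(t)/\sqrt n,\theta(t))$. First I reduce to $\Tilde Y$: on the event $\mathcal B:=\{\sup_{t\in[0,T]}\|\theta(t)\|\le 100\}\cap\{\sup_{t\in[0,T]}\|M(t)\|_{op}\le 100\sqrt n\}$ one has $Y\equiv\Tilde Y$ on $[0,T]$, so their Hölder seminorms coincide, while $\pr(\mathcal B^c)\lesssim e^{-cn/\log n}$ by \eqref{Norm Supremum} and \eqref{Operator Norm Supremum} (exactly the estimate behind \eqref{G almost always tilde G}). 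Hence it suffices to prove the stated bound for $\Tilde Y$, at the cost of the additive term $Ce^{-cn/\log n}$.

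The second step is a two-time Hölder increment bound for $\Tilde Y$ valid on all of $[0,T]$. Exactly as in the derivation of \eqref{two time tildeG estimate}, the global Lipschitz bound on $\Tilde G$ (with respect to the operator norm in the matrix slot and the Euclidean norm in the vector slot) gives
\[
    |\Tilde Y(t)-\Tilde Y(s)|\le C\Big(\|M(t)/\sqrt n-M(s)/\sqrt n\|_{op}+\|\theta(t)-\theta(s)\|\Big),
\]
and combining this with \eqref{Operator Norm Continuity} (divide $M$ by $\sqrt n$, put $u=\sqrt n\,w$) and the analogous bound $\pr(\|\theta(t)-\theta(s)\|>\sqrt{|t-s|}+w)\le e^{-nw^2/(4|t-s|)}$ from the proof of Lemma~\ref{Supremum Lemma} yields, for all $t,s\in[0,T]$ and $w>0$,
\[
    \pr\big(|\Tilde Y(t)-\Tilde Y(s)|>C_0\sqrt{|t-s|}+w\big)\le C\,e^{-c\,n w^2/|t-s|}.
\]
This is the $[0,T]$-wide counterpart of \eqref{two time tildeG estimate}; the extra deterministic term $C_0\sqrt{|t-s|}$ (absent from \eqref{two time tildeG estimate} only because $|t-s|\le n^{-1}$ there) records that the increments of $\theta$ and of $M/\sqrt n$ are genuinely of size $\sqrt{|t-s|}$.

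The third step is the chaining. Set $t^{(m)}_k:=kT2^{-m}$ and $S_m:=\max_{0\le k<2^m}|\Tilde Y(t^{(m)}_{k+1})-\Tilde Y(t^{(m)}_k)|$; by the previous display and a union bound over the $2^m$ increments, $\pr(S_m>C_0\sqrt T\,2^{-m/2}+w_m)\le C\,2^m e^{-c\,n 2^m w_m^2/T}$. A standard telescoping argument (or the chaining tail inequality \cite{Ha16}) gives, for $t\ne s$ with $T2^{-(m_0+1)}\le|t-s|<T2^{-m_0}$, the bound $|\Tilde Y(t)-\Tilde Y(s)|\le C\sum_{m\ge m_0}S_m$, hence
\[
    \|\Tilde Y\|_{C^{1/2-\delta}[0,T]}\le C\sup_{m_0\ge 0}2^{m_0(1/2-\delta)}\sum_{m\ge m_0}S_m .
\]
Choosing $w_m:=\lambda\,2^{-m(1/2-\delta/2)}$ we get $2^m w_m^2=\lambda^2 2^{m\delta}$, so $\sum_{m\ge0}C\,2^m e^{-c\,n\lambda^2 2^{m\delta}/T}\le C\,e^{-c\,n\lambda^2/T}$ once $n\lambda^2\gtrsim 1$ (the statement being trivial for smaller $\lambda$), while on the complement of $\bigcup_m\{S_m>C_0\sqrt T2^{-m/2}+w_m\}$ one has $2^{m_0(1/2-\delta)}\sum_{m\ge m_0}S_m\le C(\sqrt T+\lambda)$ uniformly in $m_0$. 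Therefore $\pr(\|\Tilde Y\|_{C^{1/2-\delta}[0,T]}>u)\le C e^{-c\,n(u-C\sqrt T)_+^2}$, which is $\le C e^{-cu^2/\log n}$ for $n$ large: for $u\le 2C\sqrt T$ the bound is vacuous once $n$ is large (since $C\ge1$), and for $u>2C\sqrt T$ we have $(u-C\sqrt T)^2\ge u^2/4$ and $e^{-cnu^2/4}\le e^{-cu^2/\log n}$. Combining with the first step completes the proof.

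The main obstacle is the chaining step: one must carry the deterministic drift $C_0\sqrt{|t-s|}$ through the chain (it produces the harmless $O(\sqrt T)$ additive constant in the seminorm) and choose the $w_m$ so that the per-scale tails sum cleanly; everything else is bookkeeping built on the two-time bound of the second step. In fact the argument delivers the stronger tail $Ce^{-cnu^2}$ for $u$ past an $O(\sqrt T)$ threshold, of which the stated $Ce^{-cu^2/\log n}$ is a weak consequence retained only to match the form of \eqref{Tilde G Concentration}.
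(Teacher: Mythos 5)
Your proposal is correct, and it rests on the same three ingredients as the paper's own argument: the reduction from $G$ to $\Tilde{G}$ via the truncation event (costing the additive $Ce^{-cn/\log n}$, exactly as in \eqref{G almost always tilde G}), the Gaussian-concentration increment bounds for $\theta$ and for $M$ (the latter being \eqref{Operator Norm Continuity}), and the Lipschitz bound on $\Tilde{G}$. Where you differ is in how the chaining is organized: the paper controls the H\"older seminorms of the two \emph{inputs} separately --- citing a classical Brownian-motion result for $\sup_{t,s}\|\theta(t)-\theta(s)\|/|t-s|^{1/2-\delta}$ and running a union bound over $n^{1/\delta}$ short intervals for $\|M(t)-M(s)\|_{op}/(\sqrt{n}|t-s|^{1/2-\delta})$ --- and only then composes with the Lipschitz bound, whereas you first compose the two-time tail bounds into a single increment estimate for $\Tilde{Y}(t)=\Tilde{G}(M(t)/\sqrt{n},\theta(t))$ and then chain dyadically on $\Tilde{Y}$ itself, carrying the deterministic drift $C_0\sqrt{|t-s|}$ through the chain. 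Your route is more self-contained (no appeal to the cited H\"older result for the vector process, and no need to worry about pairs $t,s$ lying in different short intervals, a point the paper's union-over-intervals step leaves implicit) and it yields the strictly stronger tail $Ce^{-cn(u-C\sqrt{T})_+^2}$, of which the stated $Ce^{-cu^2/\log n}$ is a weakening; the paper's route is shorter because it leans on ready-made seminorm bounds for the inputs. Your bookkeeping in the chaining step (choice $w_m=\lambda 2^{-m(1/2-\delta/2)}$, summability of $2^m e^{-cn\lambda^2 2^{m\delta}/T}$ once $n\lambda^2$ exceeds a constant depending on $T,\delta$, and triviality of the claim below that threshold) is sound, and the final case split in $u$ correctly recovers the stated form for $n$ large.
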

\begin{proof}
    From Lemma~\ref{G almost always tilde G}, we have $G(M(t)/\sqrt{n},\theta(t)) =\Tilde{G}(M(t)/\sqrt{n},\theta(t))$ on all of $[0,T]$. Thus it suffices to prove
    \begin{equation*}
        \pr\bigg(\sup_{t,s \in [0,T]}\bigg|\frac{\Tilde{G}(M(t)/\sqrt{n},\theta(t))-\Tilde{G}(M(s)/\sqrt{n},\theta(s))}{|t-s|^{1/2-\delta}}\bigg|>u\bigg) \leq Ce^{-c u^2}.
    \end{equation*}
    Towards this goal, we use a classic result in the theory of Brownian motion \cite{gall2016brownian}, that 
    \begin{equation*}
        \pr\bigg(\sup_{t,s \in [0,T]}\frac{||\theta(t)-\theta(s)||}{|t-s|^{1/2-\delta}}>u \bigg)< Ce^{-cnu^2}.
    \end{equation*}

    Next recall we previously obtained the estimate
    \begin{equation*}
        \pr(||M(t)-M(s)||_{op}> \sqrt{(2+\epsilon)|t-s|n}+u)< Ce^{-cu^2},
    \end{equation*}
    and thus 
    \begin{equation*}
        \pr\bigg(\frac{|M(t)-M(s)|}{|t-s|^{1/2-\delta}}> \sqrt{(2+\epsilon)|t-s|^{\delta}n}+u|t-s|^{\delta-1/2}\bigg)< Ce^{-cu^2}. 
    \end{equation*}
    So on an interval $I$ of size $\leq n^{-1/\delta}$, we have
    \begin{equation*}
        \pr\bigg(\frac{|M(t)-M(s)|}{|t-s|^{1/2-\delta}}>u \bigg)< Ce^{-cu^2}.
    \end{equation*}
    Taking a union of $n^{1/\delta}$ such intervals and recalling once again properties for the maximum of sub-Gaussian random variables, we have
    \begin{equation*}
        \pr\bigg(\sup_{t,s \in I}\frac{||M(t)-M(s)||_{op}}{\sqrt{n}|t-s|^{1/2-\delta}}>u \bigg)< Ce^{-cn\frac{u^2}{\log(n)}}.
    \end{equation*}
    The Lemma now follows from Lipschitz continuity of $\Tilde{G}$.
\end{proof}
We finally prove Proposition~\ref{lem:tightness}. 

\begin{proof}[Proof of Proposition~\ref{lem:tightness}]
    By Lemma~\ref{Tightness Lemma} with $G(A,x)= x^T P_{2j-1}(A)x$, and $G(A,x)= x^T P_{2j}(A)x$ for $j\leq k-1$, for any $\epsilon, \delta>0$ there is ball $B \in C^{1/2-\delta}$ such that $\pr(E_n \in B)>1-\epsilon$ for each $n$, and recalling the compact embeddings of Holder spaces, $\overline{B}$ embeds compactly in $C[0,T]$, thus $\{\Tilde{E}_n\}$ is tight in  $C[0,T]$.
\end{proof}

\section*{Acknowledgements}

{\small  
This material is based upon work supported by the U.S. National Science Foundation under award Nos CNS-2346520, PHY-2028125, RISE-2425761, DMS-2325184, OAC-2103804, and OSI-2029670, by the Defense Advanced Research Projects Agency (DARPA) under Agreement No. HR00112490488, by the Department of Energy, National Nuclear Security Administration under Award Number DE-NA0003965 and by the United States Air Force Research Laboratory under Cooperative Agreement Number FA8750-19-2-1000. Neither the United States Government nor any agency thereof, nor any of their employees, makes any warranty, express or implied, or assumes any legal liability or responsibility for the accuracy, completeness, or usefulness of any information, apparatus, product, or process disclosed, or represents that its use would not infringe privately owned rights. Reference herein to any specific commercial product, process, or service by trade name, trademark, manufacturer, or otherwise does not necessarily constitute or imply its endorsement, recommendation, or favoring by the United States Government or any agency thereof. The views and opinions of authors expressed herein do not necessarily state or reflect those of the United States Government or any agency thereof." The views and conclusions contained in this document are those of the authors and should not be interpreted as representing the official policies, either expressed or implied, of the United States Air Force or the U.S. Government. 

\noindent R. N. is supported by the NSF GRFP-2141064. }

\bibliographystyle{alpha}
\bibliography{refs}

\end{document}